\newcommand{\black}[1]{\textcolor{black}{#1}}
\def\tr{^{\intercal}}
\def\Z{\mathbb{Z}}
\def\Re{{\mathbb R}}
\def\N{\mathbb{N}}
\def\conv{\mathop{\rm conv}}
\def\proj{\mathop{\rm proj}}
\def\Re{{\mathbb R}}
\def\Sol{\mr{Sol}}
\def\lb{\! \downarrow}
\def\ub{\! \uparrow}
\def\BB{B\&B }
\def\SBB{SB\&B }
\newcommand{\vc}[1]{\bm{#1}}	
\newcommand{\mc}[1]{\mathcal{#1}}	
\newcommand{\mr}[1]{\mathrm{#1}}	
\newcommand{\mt}[1]{\mathtt{#1}}	
\journalname{}
\begin{document}
	
	\title{A graphical framework for global optimization of mixed-integer nonlinear programs
		\thanks{This work was supported in part by the AFOSR YIP Grant FA9550-23-1-0183, and the NSF CAREER Grant CMMI-2338641.}}

	\titlerunning{A graphical global solver for MINLPs}        

	\author{
		Danial Davarnia \and
		Mohammadreza Kiaghadi \and
        Junyuan Qiu}

	\authorrunning{Danial Davarnia \and Mohammadreza Kiaghadi \and Junyuan Qiu} 

	\institute{D. Davarnia \at
		Edwardson School of Industrial Engineering, Purdue University, West Lafayette, IN\\
		\email{ddavarn@purdue.edu} \Letter		   
		\and
		M. Kiaghadi \at
		Department of Industrial and Manufacturing Systems Engineering, Iowa State University, Ames, IA\\
		\email{kiaghadi@iastate.edu}
        \and
        J. Qiu \at
		Edwardson School of Industrial Engineering, Purdue University, West Lafayette, IN\\
		\email{qiu311@purdue.edu}
	}

	\date{Received: date / Accepted: date}
	
	\maketitle


\begin{abstract}
	While mixed-integer linear programming and convex programming solvers have advanced significantly over the past several decades, solution technologies for general mixed-integer nonlinear programs (MINLPs) have yet to reach the same level of maturity. 
	Various problem structures across different application domains remain challenging to model and solve using modern global solvers, primarily due to the lack of efficient parsers and convexification routines for their complex algebraic representations.
	In this paper, we introduce a novel graphical framework for globally solving MINLPs based on decision diagrams (DDs), which enable the modeling of complex problem structures that are intractable for conventional solution techniques.
	We describe the core components of this framework, including a graphical reformulation of MINLP constraints, convexification techniques derived from the constructed graphs, efficient cutting plane methods to generate linear outer approximations, and a spatial branch-and-bound scheme with convergence guarantees. 
	In addition to providing a global solution method for tackling challenging MINLPs, our framework addresses a longstanding gap in the DD literature by developing a general-purpose DD-based approach for solving general \black{bounded} MINLPs.
	To demonstrate its capabilities, we apply our framework to solve instances from one of the most difficult classes of unsolved test problems in the MINLP Library, which are otherwise inadmissible for state-of-the-art global solvers.

	\keywords{Mixed-Integer Nonlinear Programs \and Global Solver \and Decision Diagrams \and Cutting Planes \and Outer Approximation \and Spatial Branch-and-Bound}
\end{abstract}

\section{Introduction} \label{sec:introduction}

Optimization solvers play a crucial role in advancing mathematical optimization by bridging theoretical breakthroughs with computational power to solve real-world problems. While mixed-integer linear programming and convex programming solvers have made significant strides over the past few decades, solution techniques for general MINLPs still face substantial challenges. 
Some of these challenges arise from problem structures that fall outside the framework of current solvers, primarily due to the absence of appropriate parsers for specific algebraic representations found in various application domains.
\black{Examples include error functions in statistical models for portfolio optimization and risk management applications \cite{dahl1989some}, hyperbolic functions in learning models for compressor power in artificial intelligence applications \cite{schweidtmann2019deterministic}, cross-entropy functions in information theory and econometrics applications \cite{golan1996maximum,judge2011information,robinson2001updating}, and gamma functions in quantum mechanics applications \cite{ogura1999post};} see Section~\ref{sec:computation} for a detailed discussion for such applications. 
Even when problems fall within the modeling capabilities of modern solvers, several classes with complex structures suffer from weak approximations and poor solution performance. 
As a result, there remains an ongoing need to develop global solution algorithms that mitigate such limitations of traditional techniques.

\smallskip
In this paper, we introduce a novel graphical framework to globally solve general \black{bounded} MINLPs. 
The basis of this framework is formed by DDs, where the underlying problems are formulated through special-structured graphs. 
These graphs draw out data structures and variable interactions that often remain latent in the classical algebraic representation of constraints.
This intrinsic feature enables DDs to model a broad array of functional forms, including nonconvex, nonsmooth, and even black-box types, that are intractable by standard solution techniques.
Numerous computational studies suggest that DD-based algorithms can improve the solution time and quality compared to the outcome of modern solvers.
Despite the success of DDs in various application areas, they have never been used to globally solve general MINLPs. 
As a result, the framework proposed in this paper marks the first solution technology for MINLPs based on DDs that departs from traditional algebraic approaches to global optimization by capturing the graphical structure of the formulation.

\subsection{Related Work} \label{subsec:literature}

MINLPs are considered one of the most challenging classes of optimization problems, as they involve a combination of continuous and discrete variables along with nonlinear relationships in the constraints and/or objective function \cite{belotti2013mixed,lee2011mixed}. 
As a result, globally solving MINLP formulations can be a daunting task, even for the most advanced optimization solvers.
The prevalent framework to solve these problems globally is the \textit{spatial branch-and-bound} \cite{smith:pa:1999}, which relies on successive bound-reductions and convexification routines \cite{belotti:le:li:ma:wa:2009,castro2017spatial}.
The most common convexification routine is the \textit{factorable decomposition}, where complicated terms are decomposed into simpler components with known convex relaxations \cite{McCormick1976Computability,khajavirad:mi:sa:2014}. 
As the leading commercial global solver, BARON \cite{sahinidis:1996} employs these strategies to handle a wide range of MINLP structures.
At its core, BARON integrates range-reduction methods with branch-and-bound techniques to guide the search towards a globally optimal solution \cite{puranik:sa:2017}.
To accelerate the search process, cutting planes and range-contraction methods have been proposed \cite{ryoo:sa:1996,tawarmalani:sahinidis:2005}. 
Additional boosting techniques for special-structured problems include domain reduction for separable concave programs \cite{shectman:sa:1998}, consistency methods for mixed-binary problems \cite{davarnia:ra:ho:2022}, envelope construction for bilinear terms \cite{gupte2013solving,davarnia:ri:ta:2017,khademnia:da:2024}, and decomposition strategies for multilinear sets \cite{bao:kh:sa:ta:2015,delpia:kha:2018,luedtke2012some,ryoo:sa:2001}.
The concept of convex extension has also been introduced to achieve tighter relaxations for lower semi-continuous functions \cite{tawarmalani:sahinidis:2002} and fractional programs \cite{tawarmalani:sahinidis:2001}.
As a common class of MINLPs, nonconvex quadratically-constrained quadratic programs have attracted considerable research efforts, ranging from polyhedral approximations \cite{saxena2010convex} to semi-definite relaxations \cite{bao:sa:ta:2011}.  
Open-source solvers such as SCIP \cite{achterberg:2009} and COUENNE \cite{belotti:2009} utilize constraint programming and polyhedral approximations, respectively, to globally solve MINLPs.
Despite advancements in these global solvers, there remains a lack of capable solvers for handling MINLPs with complex functional forms, such as hyperbolic trigonometric terms, as noted in \cite{bienstcok:es:ge:2020}.

\smallskip

While the aforementioned approaches target MINLPs with general structures, a significant body of literature focuses on a special class of MINLPs where the underlying functions are convex. 
For these problems, the convexity property allows the design of an \textit{outer approximation} scheme that can converge to an optimal solution without relying on spatial branch-and-bound techniques \cite{duran:gr:1986,hijazi2014outer,muts2020decomposition}.
Outer approximation methods establish a refinement framework that recursively constructs and solves mixed-integer linear approximations of the problem; see \cite{bonami:ki:li:2012} for a survey on such methods. 
Various solvers have been developed based on this framework, 
\black{including BONMIN \cite{bonami:lee:2007}, DICOPT \cite{grossmann2002gams}, and SHOT \cite{lundell2022supporting}}.
A recent review of the computational performance of these solvers can be found in \cite{kronqvist:be:lu:gr:2017}.
However, when applied to nonconvex MINLPs, these solvers often provide local solutions with no guarantee of global optimality. 
Other local solvers commonly used to solve continuous relaxations of MINLPs include IPOPT \cite{IPOPT:2006}, SNOTP \cite{SNOPT:2005}, and KNITRO \cite{KNITRO:2006}.
Most of these approaches use interior-point methods to find a locally-optimal solution; see \cite{tits:wa:ba:ur:la:2003,IPOPT:2006}.
In contrast to these outer approximation and local solvers, our proposed solution method leverages outer approximation to find global optimal solutions for nonconvex MINLPs.

\smallskip
The core structure of our solution methodology in this paper is formed by DDs.
DDs were introduced in \cite{hadzic:ho:2006} as an alternative modeling tool for certain classes of combinatorial problems.
Later, \cite{andersen:ha:ho:ti:2007} proposed the concept of relaxed DDs to mitigate the exponential growth in DD size when modeling large-scale discrete problems.
Since then, significant efforts have been dedicated to enhancing DD performance in discrete optimization problems; see \cite{vanhoeve:2024} for a tutorial on DDs, \cite{bergman:ci:va:ho:2016} for an introduction to DD modeling, and \cite{castro:ci:be:2022} for a recent survey.
Thanks to their promising performance, DDs have been applied across a wide range of application areas, including healthcare \cite{bergman:ci:2018}, supply chain management \cite{bergman:ci:va:ho:2016-1}, and transportation \cite{salemi:da:2023}. 
Other avenues of research in the DD community include cutting plane theory \cite{davarnia:va:2020,tjandraatmadja:va:2019}, multi-objective and Lagrangian optimization \cite{bergman2015lagrangian}, post-optimality analysis \cite{serra2019compact}, sub-optimality and dominance detection \cite{coppe:gi:sc:2024}, integrated search tree \cite{gonzalez2020integrated}, two-stage stochastic programs \cite{lozano:sm:2018,salemi:davarnia:2022}, and sequence alignment \cite{hosseininasab2021exact}.
The novel perspective that DDs offer for modeling optimization problems has propelled DD-based solution methods into the spotlight in recent years. 
In this paper, we extend DD scope further by leveraging their unique structural properties to develop a general-purpose global framework for solving complex MINLPs.


\subsection{Contributions} \label{subsec:contributions}

\textbf{Contributions to the MINLP literature.} 
As discussed in Section~\ref{subsec:literature}, the global MINLP solution methods are based mainly on the algebraic representation of the constraints.
Nonconvex nonlinear terms are typically decomposed into simpler forms and convexified individually, making these techniques highly dependent on the specific algebraic form and properties of the functions involved. 
For example, hyperbolic and trigonometric terms, which are common in applications ranging from artificial intelligence to energy systems, 
\black{remain challenging and are sometimes inadmissible for leading global optimization solvers, such as BARON and SCIP, as they often require specialized convexification machinery and may otherwise lead to weak relaxations or poor computational performance. We also note that there exist global optimization approaches beyond factorable programming, including Lipschitzian methods \mbox{\cite{gablonsky2001locally}} and black-box optimization frameworks \mbox{\cite{audet2006mesh}}, that can handle MINLPs with complex functional forms under different assumptions.}

\smallskip
\black{Complementary to these approaches}, our DD-based framework offers significant flexibility with respect to the functional forms of the problem. 
The graphical nature of DDs enables direct evaluation and efficient relaxation of the underlying terms without decomposition during DD construction. 
As a result, our framework can be effectively applied to a wide range of MINLPs containing highly nonlinear, nonconvex, nonsmooth, and even black-box functions—many of which are intractable or poorly handled by modern global solvers. 
This paper presents a novel global solution method for MINLPs that is rooted in the graphical structure of the problem, rather than its algebraic representation.

\medskip

\noindent
\textbf{Contributions to the DD literature.}
Despite the successful application of DDs to various optimization problems over the past two decades, two significant limitations in their applicability have persisted: (i) DDs have primarily been applied to problems with special structures, and (ii) DDs were originally limited to modeling discrete programs. 
These limitations, recognized in \cite{bergman:ci:va:ho:2016}, have posed a significant barrier to the widespread adoption of DDs, highlighting the need for a general-purpose DD technology to solve MINLPs. 
In \cite{davarnia:va:2020}, the authors addressed challenge (i) by introducing an outer approximation framework that tightens relaxations of integer nonlinear programs using DDs. To tackle challenge (ii), \cite{davarnia:2021} proposed a novel DD-based methodology that obtains strong dual bounds for continuous programs. Subsequent works, including \cite{salemi:davarnia:2022,salemi:da:2023}, extended the scope of DDs through a DD-based Benders decomposition approach, which enabled their application to mixed-integer linear programs in energy systems and transportation.

\smallskip
While these efforts have extended the applicability of DDs to new problem classes, they have primarily focused on optimization problems with separable functions due to the extreme complexity of analyzing cases where the underlying functions involve non-separable terms. 
As a result, these methods have been largely limited in their application to general MINLPs, which often include non-separable constraints.
In this paper, we bridge this gap by introducing the first DD-based framework capable of modeling general nonlinear functional forms with non-separable structures. 
This significantly expands the scope of DD applications to a broad range of previously inaccessible MINLPs.
Furthermore, prior works in the literature were not designed to obtain global optimal solutions for general MINLPs, lacking key elements such as algorithmic architectures and convergence mechanisms required for handling such broader problem structures. 
In this paper, we address this gap by introducing a novel branch-and-cut framework that leverages the structure of DDs at every stage of the solution process, from constructing efficient relaxations, to generating cutting planes and outer approximations, to performing branch-and-bound with convergence guarantees.
As a result, this work establishes the first general-purpose DD-based solution method for globally solving MINLPs.

\medskip
The remainder of this paper is organized as follows.
In \black{Section~\ref{sec:prelim}}, we introduce the structure of the MINLP under study and outline the main steps of our proposed solution framework.
Section~\ref{sec:construction} provides background on DDs and details the algorithms used to construct DDs for different problem structures, representing relaxations of the MINLP. 
Additionally, we present strategies for calculating bounds for DDs and analyze the runtime complexity of the algorithms.
In Section~\ref{sec:OA}, we demonstrate how the constructed DDs can be used to generate linear outer approximations for the MINLP via various cut-generation methods employed within a separation oracle.
Section~\ref{sec:SBB} introduces a spatial branch-and-bound scheme designed to refine these outer approximations, with convergence guarantees to a global optimal solution for the MINLP.
To evaluate the effectiveness of the framework, we present computational experiments on benchmark MINLP instances in \black{Section~\ref{sec:computation}}.
Concluding remarks are provided in Section~\ref{sec:conclusion}.

\medskip

\noindent \textbf{Notation.} 
We denote the vectors by bold letters.
For any $k \in \N$, we define $[k] = \{1, 2, \dotsc, k\}$.
Given a vector $\vc{x} \in \Re^n$, we refer to a sub-vector of \black{$\vc{x}$} that includes variables with indices in $J \subseteq [n]$ as $\vc{x}_J$.
We use calligraphic font to describe sets.
Given a set $\mc{P} \subseteq \{(\vc{x}, \vc{y}) \in \Re^{n+m}\}$, we refer to the convex hull of $\mc{P}$ by $\conv(\mc{P})$.  
We denote by $\proj_{\vc{x}}(\mc{P})$ the projection of $\mc{P}$ onto the space of $\vc{x}$ variables.
For a nested sequence $\{\mc{P}^j\}$ of sets $\mc{P}^j \subseteq \Re^n$ for $j \in \N$, we denote by $\{\mc{P}^j\} \searrow \mc{P}$ the fact that this sequence converges (in the Hausdorff sense) to a set $\mc{P} \subseteq \Re^n$.
Given a closed interval $\mc{D} \subseteq \Re$, we refer to its lower and upper bound as $\mc{D} \lb$ and $\mc{D} \ub$, respectively. 
To distinguish notation, we will represent the elements of a DD using `typewriter' font.
In particular, we define a DD as $\mt{D} = (\mathtt{U}, \mathtt{A}, \mathtt{l}(.))$.
In this definition, the nodes of the DD are represented by $\mt{u} \in \mt{U}$ with state value $\mt{s}(\mt{u})$, and the arcs of DD are denoted by $\mt{a} \in \mt{A}$ with label $\mt{l}(\mt{a})$. 
We refer to the tail and the head nodes of an arc $\mt{a} \in \mt{A}$ as $\mt{t}(\mt{a})$ and $\mt{h}(\mt{a})$, respectively. 

\section{Problem Definition} \label{sec:prelim}

Consider the MINLP 
\begin{subequations} \label{eq:MINLP}
	\begin{align}
		\zeta^* = \max \quad &\vc{c} \tr \vc{x} \label{eq:MINLP-1} \\
		\text{s.t.} \quad &g^k(\vc{x}) \leq b_k, &\forall k \in K \label{eq:MINLP-2}\\
		&x_i \in \mathcal{D}_i, &\forall i \in I \cup C \label{eq:MINLP-3}
	\end{align}
\end{subequations}
where $g^k(\vc{x}): \prod_{i \in I \cup C} \mc{D}_i \to \Re$ for $k \in K$ is a general mixed-integer nonlinear function that is well-defined and bounded over the domain of variables described in \eqref{eq:MINLP-3}.
In the above model, $I$ and $C$ are the index sets for integer and continuous variables, respectively.
Further, $\mathcal{D}_i := [\mathcal{D}_i \lb, \mathcal{D}_i \ub] \cap \Z$ represents the bounded domain for integer variable $x_i$ with $i \in I$, and $\mathcal{D}_i := [\mathcal{D}_i \lb, \mathcal{D}_i \ub]$ represents the bounded domain interval for continuous variable $x_i$ for $i \in C$. 
This definition implies that for $i \in I$, $\mathcal{D}_i \lb, \mathcal{D}_i \ub \in \Z$.
Define the feasible region of constraint $k \in K$ over the variables' domain as 
\begin{equation}
	\mathcal{G}^k = \left\{\vc{x} \in \prod_{i \in I \cup C} \mc{D}_i \, \middle| \, g^k(\vc{x}) \leq b_k \right\}. \label{eq:OA_constraint}
\end{equation}

\smallskip
We outline the main steps of our solution method for globally solving \eqref{eq:MINLP-1}--\eqref{eq:MINLP-3} in Algorithm~\ref{alg:global}. 
Following this, we offer a high-level overview of the most critical components of the algorithm. 
Commonly used elements in global solvers, along with algorithmic settings, are not detailed here as they are thoroughly covered in the relevant literature; see Section~\ref{sec:introduction} for examples.

\smallskip 
This method utilizes a branch-and-bound (B\&B) tree, where each node represents a specific restriction of the feasible region of \eqref{eq:MINLP-1}--\eqref{eq:MINLP-3} induced by partitioning the variable domains.
The first node (root) of the \BB tree is created in line 1, where the original variable domains $\mc{D}_i$ for $i \in [n]$ are considered.
In line 2, the function $\text{Stop\_Flag}$ checks whether a stopping criterion has been met, signaling the termination of the algorithm.
These criteria might include factors such as a time limit, iteration count, remaining optimality gap, number of open nodes in the \BB tree, and more.
If the stopping criteria are not met, the algorithm continues in line 3 by selecting the next open node in the \BB tree for processing. 
This node can be chosen using any well-known strategy, such as depth-first, breadth-first, or best-bound.
Each node in the \BB tree is associated with a linear programming (LP) relaxation, $LP$, of the problem, defined by the variable domains corresponding to that node, along with any linear inequalities inherited from its parent node, if applicable.
This LP model is solved in line 4 to obtain an optimal solution $\vc{x}^*$.
If the problem is infeasible or unbounded, the primal bound $\underline{\zeta}$ or the dual bound $\overline{\zeta}$ will be updated accordingly in line 9.
Subsequently, for each constraint $k \in K$, the loop in lines 5--8 is executed.
First, it is checked whether the current solution $\vc{x}^*$ satisfies the constraint \eqref{eq:MINLP-2} for the current $k$.
If it does not, the oracle $\mt{Construct\_DD}$ is called to build a DD that represents the solutions (or a relaxation of the solutions) to $\mathcal{G}^k$.
Next, a linear outer approximation for the solutions of the constructed DD associated with $\mathcal{G}^k$ is generated by calling the oracle $\mt{Outer\_Approx}$ to separate point $\vc{x}^*$ from $\conv(\mathcal{G}^k)$.
The constraints obtained from this outer approximation are then added to $LP$.
Once these steps are completed for all constraints $k \in K$, the augmented LP model $LP$ is resolved.
Depending on the optimal value obtained from this model, the bounds $\underline{\zeta}$ and $\overline{\zeta}$ are updated accordingly.
In line 10, the function $\text{Prune\_Node}$ checks whether the current node can be pruned.
The pruning rules applied in this function may include standard rules such as pruning by feasibility, pruning by infeasibility, and pruning by bound, as well as more advanced rules like pruning due to inconsistency \cite{morrison:she:ja:2016}.   
If the node is not pruned, the oracle $\mt{Branch}$ is called to perform the branching operation.
This operation includes identifying a variable to branch on and determining the branching value, which may utilize any well-known techniques such as most fractional rule, strong branching, and pseudocost branching; see \cite{bonami2013branching,morrison:she:ja:2016}.
Following the branching operation, two new child nodes are created and added to the \BB tree. 
The bounds of the selected variable are updated for each node based on the branching value. 
Once this step is completed, the recursive steps of the algorithm are repeated until the stopping criteria in line 2 are met.
At this point, the algorithm terminates and returns the calculated primal and dual bounds, which can be used to calculate the remaining optimality gap achieved by the algorithm.

\smallskip
As noted in the description of Algorithm~\ref{alg:global}, three key oracles, namely $\mt{Construct\_DD}$, $\mt{Outer\_Approx}$, and $\mt{Branch}$, constitute the backbone of our solution framework.
The following three sections are dedicated to explaining these oracles in detail.

\begin{algorithm}[!ht]
	\caption{A high-level structure of the graphical global optimization framework}
	\label{alg:global}
	\KwData{MINLP of the form \eqref{eq:MINLP-1}--\eqref{eq:MINLP-3}}
	\KwResult{A lower bound $\underline{\zeta}$ and upper bound $\overline{\zeta}$ for $\zeta^*$}
	
	create the root node of the \BB tree that includes the original domain of variables
	
	\While{$\text{Stop\_Flag} = \text{False}$}
	{
		{select an open node in the \BB tree}\\
		
		{solve an initial LP relaxation $LP$ of the problem at this node to obtain an optimal solution $\vc{x}^*$}		
		
		\ForAll{$k \in K$}
		{
			
			\If{\black{$g^k(\vc{x}^*) > b_k$}}
			{
				{call $\mt{Construct\_DD}$ to construct a DD respreseting $\mathcal{G}^k$}\\
				{call $\mt{Outer\_Approx}$ to create a linear outer approximation of $\conv(\mathcal{G}^k)$ based on its associated DD and add it to $LP$}				
			}		
		}
		
		{solve $LP$ to update $\underline{\zeta}$ and $\overline{\zeta}$, if possible}\\
		
		\If{$\text{Prune\_Node} = \text{True}$}
		{
			{prune the current node in the \BB tree}							
		}
		\Else
		{
			{call $\mt{Branch}$ to perform branching and create children nodes to be added to the \BB tree}
		}
	}

\end{algorithm}

\section{DD Construction} \label{sec:construction}

In this section, we discuss the oracle $\mt{Construct\_DD}$ in Algorithm~\ref{alg:global} by outlining the steps involved in constructing DDs that represent relaxations of the set $\mc{G}^k$ for $k \in K$.
To simplify notation, we will omit the index $k$ whenever the results apply to any constraint, regardless of its specific index.
We begin with a brief background on using DDs for optimization in Section~\ref{sub: Background on DDs}. 
In Section~\ref{sub:nonseparable}, we present methods for constructing DDs for a general form of $\mc{G}^k$ that may involve non-separable functions.
Section~\ref{sub:merging} includes algorithms for merging nodes at layers of a DD to obtain relaxed DDs of a desired size.
In Section~\ref{sub:lower bound}, we introduce a strategy for calculating state values for the nodes at the DD layers to ensure that the resulting DDs provide a valid relaxation for the underlying set.
Taking all these components into account, we present the time complexity results for the proposed DD construction algorithms in Section~\ref{sub:complexity}. 
In the remainder of this paper, we assume, without loss of generality, a variable ordering in $I \cup C$ corresponding to the layers of the DD, denoted by $[n] = \{1, \dotsc, n\}$.

\subsection{Background on DDs} \label{sub: Background on DDs}

In this section, we present basic definitions and results relevant to our DD analysis.
A DD $\mt{D}$ is a directed acyclic graph denoted by the triple $(\mt{U},\mt{A}, \mt{l}(.))$ where $\mt{U}$ is a node set, $\mt{A}$ is an arc set, and $\mt{l}: \mt{A} \to \Re$ is an arc label mapping for the graph components.
This DD is composed of $n \in \N$ arc layers $\mt{A}_1, \mt{A}_2, \dots, \mt{A}_n$, and $n+1$ node layers $\mt{U}_1,\mt{U}_2,\dots,\mt{U}_{n+1}$.
The node layers $\mt{U}_1$ and $\mt{U}_{n+1}$ contain the root $\mt{r}$ and the terminal $\mt{t}$, respectively. 
In any arc layer $j \in [n]$, an arc $\mt{a} \in \mathcal \mt{A}_j$ is directed from the tail node $\mt{t}(\mt{a}) \in \mt{U}_j$ to the head node $\mt{h}(\mt{a}) \in \mt{U}_{j+1}$. 
The \textit{width} of $\mt{D}$ is defined as the maximum number of nodes at any node layer $\mt{U}_j$. 
DDs have been traditionally used to model a bounded integer set $\mathcal P \subseteq \Z^n$ such that each $\mt{r}$-$\mt{t}$ arc-sequence (path) of the form $(\mt{a}_1, \dotsc, \mt{a}_n) \in \mt{A}_1 \times \dotsc \times \mt{A}_n$ encodes a point $\vc x \in \mathcal P$ where $\mt{l}(\mt{a}_j) = x_j$ for $j \in [n]$, that is $\vc x$ is an $n$-dimensional point in $\mathcal P$ whose $j$-th coordinate is equal to the label value $\mt{l}(\mt{a}_j)$ of the arc $\mt{a}_j$.
For such a DD, we have $\mathcal P = \Sol(\mt{D})$, where $\Sol(\mt{D})$ represents the set of all $\mt{r}$-$\mt{t}$ paths.

\smallskip
As outlined above, DDs have traditionally been employed to model and solve discrete optimization problems. 
For instance, they have been extensively used to address combinatorial problems with special structures, such as stable set, set covering, and matching; see \cite{bergman:ci:va:ho:2016}.
Recently, through a series of works \cite{davarnia:2021,salemi:davarnia:2022,salemi:da:2023,davarnia:ki:2025}, the application of DD-based optimization has been extended to mixed-integer programs. 
This extension has enabled applications in new domains, ranging from energy systems to transportation, involving a combination of discrete and continuous variables.
However, these works primarily focus on problems with separable functions.
In this paper, we address this limitation by considering general functional forms in MINLP constraints that may involve non-separable terms, requiring a significantly more complex analysis.
Our results in this paper unify and significantly expand upon the methods developed in the prior works, leading to a general-purpose global solution framework for MINLPs, which integrates all essential components, from convexification to spatial branch-and-bound techniques with convergence guarantees.

\smallskip
The following result presents a technique known as \textit{arc reduction}, which reduces the size of a decision diagram (DD) while preserving the convex hull of its solution set.
Originally introduced in \cite{davarnia:2021} for modeling continuous sets using DDs, 
\black{a similar reduction technique can be applied here. In particular, since the technique depends only on the labels of parallel arcs between connected nodes, it can be used in mixed-integer sets without relying on whether variables are continuous or discrete.}
Consider the following definitions for a DD $\mt{D} = (\mt{U}, \mt{A}, \mt{l}(.))$.
For each pair $(\mt{u}, \mt{v})$ of connected nodes of $\mt{D}$ with $\mt{u} \in \mt{U}_i$ and $\mt{v} \in \mt{U}_{i+1}$ for some $i \in [n]$, define $l^{\max}(\mt{u}, \mt{v})$ to be the maximum label of all arcs connecting $\mt{u}$ and $\mt{v}$, i.e., $l^{\max}(\mt{u}, \mt{v}) = \max\{ \mt{l}(\mt{a}) \, | \, \mt{a} \in \mt{A},  \mt{t}(\mt{a}) = \mt{u}, \mt{h}(\mt{a}) = \mt{v} \}$.
Similarly, define $l^{\min}(\mt{u}, \mt{v}) = \min\{ \mt{l}(\mt{a}) \, | \, \mt{a} \in \mt{A},  \mt{t}(\mt{a}) = \mt{u}, \mt{h}(\mt{a}) = \mt{v} \}$ to be the minimum label of all arcs connecting $\mt{u}$ and $\mt{v}$.

\begin{proposition} \label{prop:remove arc}
	Consider a DD $\mt{D} = (\mt{U}, \mt{A}, \mt{l}(.))$.
	Let $\bar{\mt{D}} = (\bar{\mt{U}}, \bar{\mt{A}}, \bar{\mt{l}}(.))$ be a DD obtained from $\mt{D}$ by removing every arc $\mt{a} \in \mt{A}$ such that $l^{\min}(\mt{t}(\mt{a}), \mt{h}(\mt{a})) < \mt{l}(\mt{a}) < l^{\max}(\mt{t}(\mt{a}), \mt{h}(\mt{a}))$. 
	Then, $\conv(\Sol(\bar{\mt{D}})) = \conv(\Sol(\mt{D}))$.
\end{proposition}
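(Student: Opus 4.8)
The plan is to prove the two inclusions separately, with the easy direction following immediately from the construction. Since $\bar{\mt{D}}$ is obtained from $\mt{D}$ only by deleting arcs---leaving every node and, crucially, the extreme (minimum- and maximum-label) arc between each connected pair in place---every $\mt{r}$-$\mt{t}$ path of $\bar{\mt{D}}$ is also an $\mt{r}$-$\mt{t}$ path of $\mt{D}$. Hence $\Sol(\bar{\mt{D}}) \subseteq \Sol(\mt{D})$, and taking convex hulls gives $\conv(\Sol(\bar{\mt{D}})) \subseteq \conv(\Sol(\mt{D}))$.

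For the reverse inclusion it suffices, by monotonicity of the convex hull, to show that each $\vc{x} \in \Sol(\mt{D})$ lies in $\conv(\Sol(\bar{\mt{D}}))$. Fix an $\mt{r}$-$\mt{t}$ path $(\mt{a}_1, \dotsc, \mt{a}_n)$ encoding $\vc{x}$, so $x_j = \mt{l}(\mt{a}_j)$ for $j \in [n]$. For each layer $j$, write $u_j = l^{\max}(\mt{t}(\mt{a}_j), \mt{h}(\mt{a}_j))$ and $d_j = l^{\min}(\mt{t}(\mt{a}_j), \mt{h}(\mt{a}_j))$; by definition $d_j \le x_j \le u_j$, so there is $\lambda_j \in [0,1]$ with $x_j = \lambda_j u_j + (1 - \lambda_j) d_j$ (when $d_j = u_j$ any choice works). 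The point of this decomposition is that both the $d_j$- and the $u_j$-labeled arcs between $\mt{t}(\mt{a}_j)$ and $\mt{h}(\mt{a}_j)$ survive in $\bar{\mt{D}}$.

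Next I would build a family of paths in $\bar{\mt{D}}$ indexed by the patterns $\sigma \in \{0,1\}^n$: along layer $j$, pattern $\sigma$ uses the $u_j$-labeled arc if $\sigma_j = 1$ and the $d_j$-labeled arc if $\sigma_j = 0$. Because the head and tail nodes are untouched and the extreme arcs are retained, each such choice is a legitimate $\mt{r}$-$\mt{t}$ path in $\bar{\mt{D}}$ and therefore encodes a point $\vc{y}^\sigma \in \Sol(\bar{\mt{D}})$ with $y^\sigma_j \in \{d_j, u_j\}$. Assigning to $\vc{y}^\sigma$ the weight $\mu_\sigma = \prod_{j : \sigma_j = 1} \lambda_j \prod_{j : \sigma_j = 0} (1 - \lambda_j)$ yields nonnegative coefficients summing to $\prod_{j=1}^n (\lambda_j + (1 - \lambda_j)) = 1$. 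A coordinate-wise check then shows $\sum_\sigma \mu_\sigma \vc{y}^\sigma = \vc{x}$: factoring each weight as a product and summing out every coordinate other than $j$ collapses its $j$-th entry to $\lambda_j u_j + (1 - \lambda_j) d_j = x_j$. This exhibits $\vc{x}$ as a convex combination of points of $\Sol(\bar{\mt{D}})$, yielding $\conv(\Sol(\mt{D})) \subseteq \conv(\Sol(\bar{\mt{D}}))$ and hence the claimed equality.

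The step I expect to require the most care is the last one: verifying that the product-form weights reproduce $\vc{x}$ exactly. This is the standard multilinear-interpolation identity, and it works precisely because the arc replacements in different layers are independent---each can be toggled between $d_j$ and $u_j$ without affecting the validity of the path, since connectivity of the tail and head nodes is preserved. I would emphasize this independence as the conceptual crux, noting that it is exactly what deleting only the strictly intermediate arcs guarantees, so that no intermediate label is ever needed to represent a point as a convex combination of extreme-label paths.
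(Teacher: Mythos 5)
Your proof is correct and follows essentially the same route as the paper's: the forward inclusion is immediate from arc deletion, and the reverse inclusion rests on replacing each arc of a path by the two surviving extreme-label arcs between the same node pair, producing $2^n$ points of $\Sol(\bar{\mt{D}})$ whose convex hull (a hyper-rectangle coordinate-wise) contains the encoded point. The only difference is one of rigor in your favor: where the paper merely asserts the existence of convex multipliers expressing each path point in terms of these extreme-label points, you reduce to path points via monotonicity of the convex hull and exhibit the multipliers explicitly through the product-form multilinear-interpolation weights.
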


\begin{proof}
	We prove the result by showing $\conv(\Sol(\bar{\mt{D}})) \subseteq \conv(\Sol(\mt{D}))$ and $\conv(\Sol(\bar{\mt{D}})) \supseteq \conv(\Sol(\mt{D}))$.
	The forward inclusion is straightforward as the arcs in $\bar{\mt{D}}$ are a subset of the arcs in $\mt{D}$ by definition, which implies that the root-terminal paths in $\bar{\mt{D}}$ are a subset of the root-terminal paths in $\mt{D}$.
	Therefore, $\Sol(\bar{\mt{D}}) \subseteq \Sol(\mt{D})$, which yields $\conv(\Sol(\bar{\mt{D}})) \subseteq \conv(\Sol(\mt{D}))$.
	
	\smallskip
	For the reverse inclusion, consider a point $\vc{x} \in \conv(\Sol(\mt{D}))$.
	It follows that there exists a collection of $\mt{r}$-$\mt{t}$ paths of the form $\mt{P}^j = (\mt{a}^j_1, \dotsc, \mt{a}^j_n)$ of $\mt{D}$ for $j \in [p]$ for some $p \in \N$, each encoding a point $\vc{x}^j = (\mt{l}(\mt{a}^j_1), \dotsc, \mt{l}(\mt{a}^j_n))$, such that $\vc{x} = \sum_{j=1}^p \lambda_j \vc{x}^j$ with $\sum_{j=1}^p \lambda_j = 1$ and $\lambda_j \geq 0$ for all $j \in [p]$.
	Next, construct the sets $S_j$ for each $j \in [p]$ that consists of points $\hat{\vc{x}}^{j, k} \in \Re^n$ for $k \in [q^j]$ for some $q^j \in \N$ such that $\hat{x}^{j,k}_i \in \big\{l^{\min}(\mt{t}(\mt{a}^j_i), \mt{h}(\mt{a}^j_i)), l^{\max}(\mt{t}(\mt{a}^j_i), \mt{h}(\mt{a}^j_i)) \big\}$ for each $i \in [n]$.
	There are a maximum of $2^n$ such points. 
	Each such point $\hat{\vc{x}}^{j,k}$ corresponds to an $\bar{\mt{r}}$-$\bar{\mt{t}}$ path of $\bar{\mt{D}}$ because all of its arcs are maintained for this DD by construction, implying that $\hat{\vc{x}}^{j,k} \in \Sol(\bar{\mt{D}})$.
	As a result, we can write $\vc{x}^j = \sum_{k =1}^{q_j} \mu^{j,k} \hat{\vc{x}}^{j, k}$ for some $\vc{\mu}$ such that $\sum_{k =1}^{q_j} \mu^{j,k} = 1$ and $\mu^{j,k} \geq 0$ for all $k \in [q_j]$.
	Using this relation for all $j \in [p]$, we can write $\vc{x} = \sum_{j=1}^p \lambda_j \vc{x}^j = \sum_{j=1}^p \lambda_j ( \sum_{k =1}^{q_j} \mu^{j,k} \hat{\vc{x}}^{j, k})$ where $\hat{\vc{x}}^{j,k} \in \Sol(\bar{\mt{D}})$ for each $j \in [p]$ and $k \in [q_j]$ with $\lambda_j \mu^{j,k} \geq 0$ and $\sum_{j=1}^p \sum_{k =1}^{q_j} \lambda_j \mu^{j,k} = 1$.  
	Therefore, $\vc{x} \in \conv(\Sol(\bar{\mt{D}}))$.
		\qed       
\end{proof}

Proposition~\ref{prop:remove arc} suggests that when multiple parallel arcs exist between two nodes in a DD, we can retain only the arcs with the minimum and maximum label values, removing all others. 
This operation preserves the convex hull of the DD's solution set. 
For the remainder of this paper, we will assume that this technique is applied wherever applicable.

\subsection{Relaxed DD for Non-Separable Constraints}  \label{sub:nonseparable}

In this section, we outline the steps for constructing a DD that represents a relaxation of the constraint set $\mc{G} = \left\{\vc{x} \in \prod_{i=1}^{n} \mc{D}_i \, \middle| \, g(\vc{x}) \leq b \right\}$ where the underlying function $g(\vc{x})$ may contain non-separable terms.
We refer to this general function form as ``non-separable'' since it encompasses models with separable functions as a special case.
As noted in Section~\ref{sec:introduction}, the DD results presented in the literature \cite{davarnia:va:2020,davarnia:2021} have mainly been developed for cases where the functions involved in the constraints are separable, i.e., they can be written as $g(\vc{x}) = \sum_{i=1}^n g_i(x_i)$.
In these cases, following the DD construction procedure in \cite{davarnia:va:2020,davarnia:2021}, the state values calculated for a node layer of the DD depend only on the state values of the nodes in the previous layer and the labels of the arcs in that layer, thereby satisfying a Markovian property. 
This property streamlines the construction of the DD.
However, in the non-separable case, the state values calculated for a node layer can be impacted by the arc labels from any of the preceding layers, due to the interactions between variables in the non-separable terms. As a result, constructing a relaxed DD in this context is significantly more challenging.
In what follows, we illustrate the methods for constructing such relaxed DDs.
These results also unify the methods developed in \cite{davarnia:va:2020} for discrete problems and \cite{davarnia:2021} for continuous problems, extending them to represent the mixed-integer case for a general MINLP. 


\begin{definition} \label{def:backtrack}
	Consider a DD $\mt{D} = (\mt{U}, \mt{A}, \mt{l}(.))$ with the variable ordering $1, \dotsc, n$.
	Given a layer $i \in [n]$ and a node $\mt{v} \in \mt{U}_i$, we define $\mt{A}_j(\mt{v})$ to be the set of arcs in arc layer $j \in [i-1]$ that lie on a path from the root node to node $\mt{v}$.
	Further, we define $\mc{D}_j(\mt{v}) = [\mc{D}_j(\mt{v}) \lb, \mc{D}_j(\mt{v}) \ub]$ for $j \in [i-1]$ to be \textit{the sub-domain of variable $x_j$ relative to node $\mt{v}$}.
	In this definition, the domain lower bound is calculated as $\mc{D}_j(\mt{v}) \lb = \min_{\mt{a} \in \mt{A}_j(\mt{v})} \{\mt{l}(\mt{a})\}$ and the domain upper bound is calculated as $\mc{D}_j(\mt{v}) \ub = \max_{\mt{a} \in \mt{A}_j(\mt{v})} \{\mt{l}(\mt{a})\}$. 
\end{definition}

Determining the variable sub-domains relative to a node requires accounting for all paths in the DD that pass through that node. 
This can become computationally intensive, especially for large-scale DDs. 
The following proposition offers an efficient method for calculating these bounds by leveraging the top-down construction process of DDs.


\begin{proposition} \label{prop:relative domain}
	Consider a DD $\mt{D} = (\mt{U}, \mt{A}, \mt{l}(.))$ with the variable ordering $1, \dotsc, n$.
	Given a node layer $i \in [n]$ and a node $\mt{v} \in \mt{U}_i$, we can calculate $\mc{D}_j(\mt{v}) = [\mc{D}_j(\mt{v}) \lb, \mc{D}_j(\mt{v}) \ub]$ for $j \in [i-1]$ as follows:
	\begin{itemize}
		\item[(i)] If $j = i-1$:
		\begin{subequations}	
			\begin{align}
				& \mc{D}_j(\mt{v}) \lb = \min_{\mt{a} \in \delta^-(\mt{v})} \big\{ \mt{l}(\mt{a}) \big\} \label{eq:relative domain:1}\\
				& \mc{D}_j(\mt{v}) \ub = \max_{\mt{a} \in \delta^-(\mt{v})} \big\{ \mt{l}(\mt{a}) \big\}. \label{eq:relative domain:2}
			\end{align}
		\end{subequations}	
		\item[(ii)] If $j < i-1$:
		\begin{subequations}	
			\begin{align}
				& \mc{D}_j(\mt{v}) \lb = \min_{\mt{a} \in \delta^-(\mt{v})} \big\{ \mc{D}_j(\mt{t}(\mt{a})) \lb \big\} \label{eq:relative domain:3}\\
				& \mc{D}_j(\mt{v}) \ub = \max_{\mt{a} \in \delta^-(\mt{v})} \big\{ \mc{D}_j(\mt{t}(\mt{a})) \ub \big\}. \label{eq:relative domain:4}
			\end{align}
		\end{subequations}
	\end{itemize}
\end{proposition}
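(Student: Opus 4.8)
The plan is to prove the recursive formulas in Proposition~\ref{prop:relative domain} by reconciling the direct definition of the relative sub-domain (Definition~\ref{def:backtrack}), which quantifies over \emph{all} arcs in layer $j$ lying on some root-to-$\mt{v}$ path, with the local recursion that only examines the incoming arcs $\delta^-(\mt{v})$ of $\mt{v}$. The key observation is that every root-to-$\mt{v}$ path must pass through exactly one incoming arc $\mt{a} \in \delta^-(\mt{v})$, and the portion of that path before $\mt{a}$ is a root-to-$\mt{t}(\mt{a})$ path. This lets me decompose the set $\mt{A}_j(\mt{v})$ of arcs in layer $j$ on root-to-$\mt{v}$ paths as a union over the predecessor nodes $\mt{t}(\mt{a})$.

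\textbf{Case (i), $j = i-1$.} Here the relevant arcs in layer $j = i-1$ that lie on a root-to-$\mt{v}$ path are precisely the incoming arcs of $\mt{v}$, i.e.\ $\mt{A}_{i-1}(\mt{v}) = \delta^-(\mt{v})$. This is immediate because an arc in layer $i-1$ lies on a root-to-$\mt{v}$ path if and only if its head is $\mt{v}$, which is exactly the condition $\mt{a} \in \delta^-(\mt{v})$. Substituting this identity into the definitions $\mc{D}_{i-1}(\mt{v}) \lb = \min_{\mt{a} \in \mt{A}_{i-1}(\mt{v})}\{\mt{l}(\mt{a})\}$ and the analogous $\max$ expression from Definition~\ref{def:backtrack} yields \eqref{eq:relative domain:1}--\eqref{eq:relative domain:2} directly.

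\textbf{Case (ii), $j < i-1$.} The plan is to argue that $\mt{A}_j(\mt{v}) = \bigcup_{\mt{a} \in \delta^-(\mt{v})} \mt{A}_j(\mt{t}(\mt{a}))$. For the forward inclusion, any arc $\mt{b} \in \mt{A}_j(\mt{v})$ lies on a root-to-$\mt{v}$ path $\mt{P}$; since $j < i-1$, arc $\mt{b}$ appears strictly before the final arc of $\mt{P}$, and that final arc is some $\mt{a} \in \delta^-(\mt{v})$, so the prefix of $\mt{P}$ up to $\mt{t}(\mt{a})$ is a root-to-$\mt{t}(\mt{a})$ path containing $\mt{b}$, giving $\mt{b} \in \mt{A}_j(\mt{t}(\mt{a}))$. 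For the reverse inclusion, any root-to-$\mt{t}(\mt{a})$ path through $\mt{b}$ can be extended by $\mt{a}$ into a root-to-$\mt{v}$ path, so $\mt{b} \in \mt{A}_j(\mt{v})$. Taking the minimum of labels over this union and using the standard identity $\min$ over a union equals the $\min$ of the $\min$s gives $\mc{D}_j(\mt{v}) \lb = \min_{\mt{a} \in \delta^-(\mt{v})} \min_{\mt{b} \in \mt{A}_j(\mt{t}(\mt{a}))}\{\mt{l}(\mt{b})\} = \min_{\mt{a} \in \delta^-(\mt{v})}\{\mc{D}_j(\mt{t}(\mt{a})) \lb\}$, which is \eqref{eq:relative domain:3}; the upper bound \eqref{eq:relative domain:4} follows symmetrically with $\max$.

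\textbf{The main obstacle} is establishing the set-decomposition $\mt{A}_j(\mt{v}) = \bigcup_{\mt{a} \in \delta^-(\mt{v})} \mt{A}_j(\mt{t}(\mt{a}))$ cleanly, in particular verifying the reverse inclusion, which relies on the fact that in a layered DD any root-to-$\mt{t}(\mt{a})$ path genuinely extends to a root-to-$\mt{v}$ path through $\mt{a}$ — i.e.\ that all nodes and arcs are on some root-to-terminal structure and that the layered acyclic structure guarantees no conflicts in the concatenation. One should also note that the formulas are implicitly an inductive scheme on the layer index $i$: the right-hand sides of \eqref{eq:relative domain:3}--\eqref{eq:relative domain:4} invoke $\mc{D}_j(\mt{t}(\mt{a}))$ at the earlier layer $i-1$, so the correctness of the recursion presupposes that these quantities have already been correctly computed, which is exactly what the top-down construction guarantees and what makes the method efficient. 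I would make this inductive dependency explicit to close the argument rigorously.
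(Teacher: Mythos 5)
Your proof is correct and follows essentially the same route as the paper's: case (i) rests on the identity $\mt{A}_{i-1}(\mt{v}) = \delta^-(\mt{v})$, and case (ii) on decomposing root-to-$\mt{v}$ paths according to their last arc, which is exactly the paper's argument (the paper phrases the decomposition as two inequalities on the minima rather than as the set equality $\mt{A}_j(\mt{v}) = \bigcup_{\mt{a} \in \delta^-(\mt{v})} \mt{A}_j(\mt{t}(\mt{a}))$ followed by the min-over-union identity, but the content is the same). Your explicit remarks on the reachability of every node from the root and on the inductive dependency across layers are the same structural facts the paper invokes, so no gap remains.
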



\begin{proof}
	We prove the results for the lower bound equations \eqref{eq:relative domain:1} and \eqref{eq:relative domain:3} as the proof arguments for the upper bound equations are similar.
	\begin{itemize}		
		\item[(i)] Assume that $j = i-1$. 
		In this case, node $\mt{v} \in \mt{U}_{i}$ must be the head node of each arc $\mt{a} \in \mt{A}_j(\mt{v})$ that is on a path from the root node of $\mt{D}$ to node $\mt{v}$.
		That is, $\mt{A}_j(\mt{v}) \subseteq \delta^-(\mt{v})$.
		On the other hand, using the DD structure that each node at layer $i \in \{2, \dotsc, n-1\}$ is connected to at least one node in the previous layer, we obtain that each arc $\mt{a} \in \delta^-(\mt{v})$ must be on a path from the root node to $\mt{v}$, \textit{i.e.}, $\mt{A}_j(\mt{v}) \supseteq \delta^-(\mt{v})$.
		Therefore, $\mt{A}_j(\mt{v}) = \delta^-(\mt{v})$. 
		As a result, by definition of $\mc{D}_j(\mt{v}) \lb$, we have $\mc{D}_j(\mt{v}) \lb = \min_{\mt{a} \in \mt{A}_j(\mt{v})} \{\mt{l}(\mt{a})\} = \min_{\mt{a} \in \delta^-(\mt{v})} \big\{ \mt{l}(\mt{a}) \big\}$.		
		
		\item[(ii)] Assume that $j < i-1$.
		For the forward direction, consider arc $\mt{a} \in \mt{A}_j(\mt{v})$, \textit{i.e.}, $\mt{a}$ is on a path from the root node of $\mt{D}$ to node $\mt{v}$.
		This path passes through some node $\mt{u}$ in the node layer $i-1$ that is connected to node $\mt{v}$ via arc $\dot{\mt{a}}$, \textit{i.e.}, $\mt{u} = \mt{t}(\dot{\mt{a}})$ and $\dot{\mt{a}} \in \delta^-(\mt{v})$.
		As a result, $\mt{a} \in \mt{A}_j(\mt{u})$.
		Using this relation, we obtain that $\mt{A}_j(\mt{v}) \subseteq \bigcup_{\dot{\mt{a}} \in \delta^-(\mt{v})} \mt{A}_j(\mt{t}(\dot{\mt{a}}))$.
		Therefore, $\mc{D}_j(\mt{v}) \lb = \min_{\mt{a} \in \mt{A}_j(\mt{v})} \{\mt{l}(\mt{a})\} \geq \min_{\dot{\mt{a}} \in \delta^-(\mt{v})} \big\{ \min_{\hat{\mt{a}} \in \mt{A}_j(\mt{t}(\dot{\mt{a}}))} \{\mt{l}(\hat{\mt{a}})\} \big\} = \min_{\dot{\mt{a}} \in \delta^-(\mt{v})} \big\{ \mc{D}_j(\mt{t}(\dot{\mt{a}})) \lb \big\}$, where the first equality follows from the definition of $\mc{D}_j(\mt{v}) \lb$, the inequality follows from the previously derived inclusion argument, and the last equality follows from the definition of $\mc{D}_j(\mt{t}(\dot{\mt{a}})) \lb$.
		By design, node $\mt{v}$ is connected to a node $\mt{u}$ in the previous node layer $\mt{U}_{i-1}$.
		For the reverse direction, assume that $\min_{\mt{a} \in \delta^-(\mt{v})} \big\{ \mc{D}_j(\mt{t}(\mt{a})) \lb \big\}$ is achieved by arc $\mt{a}^* \in \delta^-(\mt{v})$, \textit{i.e,.} $\mc{D}_j(\mt{t}(\mt{a}^*)) \lb = \mt{l}(\bar{\mt{a}})$ for some $\bar{\mt{a}} \in \mt{A}_j(\mt{t}(\mt{a}^*))$.
		This means that $\bar{\mt{a}}$ is on a path from the root node of $\mt{D}$ to node $\mt{t}(\mt{a}^*)$, which is the tail node of arc $\mt{a}^*$ that connects this node to $\mt{v}$.
		As a result, $\bar{\mt{a}}$ is on a path from the root node of $\mt{D}$ to node $\mt{v}$, \textit{i.e.}, $\bar{\mt{a}} \in \mt{A}_j(\mt{v})$.
		We can write that $\mc{D}_j(\mt{v}) \lb = \min_{\mt{a} \in \mt{A}_j(\mt{v})} \{\mt{l}(\mt{a})\} \leq \mt{l}(\bar{\mt{a}}) = \mc{D}_j(\mt{t}(\mt{a}^*)) \lb = \min_{\mt{a} \in \delta^-(\mt{v})} \big\{ \mc{D}_j(\mt{t}(\mt{a})) \lb \big\}$, where the first equality follows from the definition of $\mc{D}_j(\mt{v}) \lb$, the inequality holds because $\bar{\mt{a}} \in \mt{A}_j(\mt{v})$, and the second and third equalities follow from the assumptions.
		Combining both directions, we obtain the \black{equation~\eqref{eq:relative domain:3}}.		
	\end{itemize}
		\qed       
\end{proof}

The recursive relations \eqref{eq:relative domain:1}--\eqref{eq:relative domain:2} and \eqref{eq:relative domain:3}--\eqref{eq:relative domain:4} provide an efficient method to compute the values of $\mc{D}_j(\mt{v})$ during the construction of DD layers by updating the lower and upper bound values of its interval as each new node is created.
The next corollary gives the complexity of these calculations.
The proof is omitted, as it directly follows from the application of the recursive formulas across consecutive DD layers.

\begin{corollary} \label{cor:relative domain complexity}
	Consider a DD $\mt{D} = (\mt{U}, \mt{A}, \mt{l}(.))$ with the variable ordering $1, \dotsc, n$.
	Given a node layer $i \in [n]$ and an arc layer $j \in [i-1]$, the sub-domain of variable $x_j$ relative to all nodes in $\mt{U}_i$ can be calculated in $\mc{O}(\sum_{k=j}^{i-1} |\mt{A}_k|)$ using the recursive relations \eqref{eq:relative domain:1}--\eqref{eq:relative domain:2} and \eqref{eq:relative domain:3}--\eqref{eq:relative domain:4}.
		\qed
\end{corollary}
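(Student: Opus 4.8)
The plan is to establish the bound by an amortized, layer-by-layer accounting of the arithmetic performed when the recursive formulas of Proposition~\ref{prop:relative domain} are evaluated in the natural top-down order. I would fix the target node layer $i$ and the arc layer $j \in [i-1]$, and the computation I would analyze is the one that builds the intervals $\mc{D}_j(\mt{v})$ for the nodes $\mt{v}$ of successive layers $\mt{U}_{j+1}, \mt{U}_{j+2}, \dotsc, \mt{U}_i$. The key structural observation to rely on is that, for any node layer $\mt{U}_{k+1}$, the incoming-arc sets $\delta^-(\mt{v})$ over all $\mt{v} \in \mt{U}_{k+1}$ are pairwise disjoint and their union is exactly the arc layer $\mt{A}_k$; hence the total number of incoming arcs encountered while sweeping a single node layer $\mt{U}_{k+1}$ equals $|\mt{A}_k|$.

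With that observation in hand, the argument proceeds in two stages. First, for the base layer $\mt{U}_{j+1}$ I would invoke part~(i) of Proposition~\ref{prop:relative domain}: each interval $\mc{D}_j(\mt{v})$ for $\mt{v} \in \mt{U}_{j+1}$ is obtained by a single min/max pass over the labels of $\delta^-(\mt{v})$, so computing it for all such nodes touches each arc of $\mt{A}_j$ a constant number of times, costing $\mc{O}(|\mt{A}_j|)$. Second, for each subsequent layer $k = j+2, \dotsc, i$, part~(ii) expresses $\mc{D}_j(\mt{v})$ for $\mt{v} \in \mt{U}_k$ as a min/max over the already-computed quantities $\mc{D}_j(\mt{t}(\mt{a}))$ ranging over $\mt{a} \in \delta^-(\mt{v})$; each arc thus contributes $\mc{O}(1)$ work, and by the structural observation the whole sweep of layer $\mt{U}_k$ costs $\mc{O}(|\mt{A}_{k-1}|)$. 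Summing the per-layer costs then gives $|\mt{A}_j| + \sum_{k=j+2}^{i} |\mt{A}_{k-1}| = \sum_{k=j}^{i-1} |\mt{A}_k|$, which is exactly the claimed bound (and reduces correctly to $\mc{O}(|\mt{A}_{i-1}|)$ in the degenerate case $j = i-1$, where only the base stage applies).

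The only point requiring genuine care — and the step I expect to carry the substance of the argument — is justifying that the recursion can be evaluated in a single forward sweep with no recomputation, i.e., that when layer $\mt{U}_k$ is processed the subvalues $\mc{D}_j(\mt{t}(\mt{a}))$ for all tails $\mt{t}(\mt{a}) \in \mt{U}_{k-1}$ are already stored. This follows from the acyclic, layered structure of $\mt{D}$: every arc into $\mt{U}_k$ has its tail in $\mt{U}_{k-1}$, so processing node layers in increasing order of index guarantees each needed subvalue is available when referenced. Everything else is routine bookkeeping of constant-time min/max updates, matching the authors' remark that the result follows directly from applying the recursive formulas across consecutive DD layers.
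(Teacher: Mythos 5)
Your proof is correct and takes exactly the route the paper intends: the authors omit the proof with the remark that it "directly follows from the application of the recursive formulas across consecutive DD layers," and your forward-sweep, layer-by-layer amortized accounting—resting on the observation that the sets $\delta^-(\mt{v})$ over $\mt{v} \in \mt{U}_{k+1}$ partition the arc layer $\mt{A}_k$—is precisely the omitted bookkeeping, with the sum $|\mt{A}_j| + \sum_{k=j+2}^{i}|\mt{A}_{k-1}| = \sum_{k=j}^{i-1}|\mt{A}_k|$ matching the stated bound. No gaps.
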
	

Corollary~\ref{cor:relative domain complexity} shows that the sub-domains of a particular variable in layer $j$ relative to all nodes in layer $i$, for any $j < i$, can be computed in linear time in the number of arcs between layers $j$ and $i$.
This method makes the calculation of these interval values simple and fast during the DD construction process.

\smallskip
Next, we introduce an algorithm (Algorithm~\ref{alg:nonseparable DD}) to build relaxed DDs for non-separable functions.
Consider $\mc{G} = \left\{\vc{x} \in \prod_{i=1}^{n} \mc{D}_i \, \middle| \, g(\vc{x}) \leq b \right\}$, where $g(\vc{x}) = \sum_{j=1}^q g_j(\vc{x}_{H_j})$ such that $g_j(\vc{x}_{H_j}):\Re^{|H_j|} \to \Re$ is a non-separable function that contains variables with indices in $H_j \subseteq [n]$.	
For each $j \in [q]$, define $H_j^{\text{max}} = \max_{k \in H_j}\{k\}$, which represents the last DD layer that involves a variable in the non-separable function $g_j(\vc{x}_{H_j})$.
It is clear from this definition that, when $g(\vc{x})$ is separable, for each $j \in [q]$, we have $H_j = \{l_j\}$ for some $l_j \subseteq [n]$, and $H_j^{\text{max}} = l_j$.

\smallskip
In the description of Algorithm~\ref{alg:nonseparable DD}, we define $L_i$, for each $i \in [n]$, to be the index set for \textit{sub-domain partitions} $\mc{D}^j_i := [\mc{D}^j_i \lb, \mc{D}^j_i \ub]$ for $j \in L_i$, which collectively span the entire domain of variable $x_i$, i.e., $\bigcup_{j \in L_i} \mc{D}^j_i = \mc{D}_i$.
If $x_i$ is integer, a sub-domain partition $\mc{D}^j_i$ may consist of integral numbers within an interval, i.e., $\mc{D}^j_i := [\mc{D}^j_i \lb, \mc{D}^j_i \ub] \cap \Z$.

\begin{algorithm}[!ht]
	\caption{Relaxed DD for a non-separable constraint}
	\label{alg:nonseparable DD}
	\KwData{Set $\mc{G} = \left\{\vc{x} \in \prod_{i=1}^{n} \mc{D}_i \, \middle| \, g(\vc{x}) \leq b \right\}$, where $g(\vc{x}) = \sum_{k=1}^q g_k(\vc{x}_{H_k})$ where $g_k(\vc{x}_{H_k}):\Re^{|H_k|} \to \Re$ is a non-separable function that contains variables with indices in $H_k$, and the sub-domain partitions $\mc{D}^j_i$ with $j \in L_i$ for $i \in [n]$}
	\KwResult{A DD $\mt{D}$}
	
	{create the root node $\mt{r}$ in the node layer $\mt{U}_1$ with state value $\mt{s}(\mt{r}) = 0$}\\ 
	{create the terminal node $\mt{t}$ in the node layer $\mt{U}_{n+1}$}
	
	\ForAll{$i \in [n]$, $\mt{u} \in \mt{U}_i$, $j \in L_i$}
	{
		\ForAll{$k \in [q]$}
		{
			\If{$i = H_k^{\text{max}}$}
			{
				{calculate $\eta_k$ such that $\eta_k \leq g_k(\vc{x}_{H_k})$ for all $x_i \in \mc{D}^j_i$ and $x_l \in \mc{D}_l(\mt{u})$ for $l \in H_k \setminus \{i\}$}				
			}
			\Else
			{
				\black{set $\eta_k = 0$}
			}
		}
		
		{calculate $\xi = \mt{s}(\mt{u}) + \sum_{k=1}^q \eta_k$}
		
		\If{$i < n$}
		{
			{create a node $\mt{v}$ with state value $\mt{s}(\mt{v}) = \xi$ (if it does not already exist) in the node layer $\mt{U}_{i+1}$}\\
			{add two arcs from $\mt{u}$ to $\mt{v}$ with label values $\mc{D}^j_i \lb$ and $\mc{D}^j_i \ub$}\\	
			{update $\mc{D}_k(\mt{v})$ for all $k$ such that $k \in H_l$ for some $l > i$}
		}
		\ElseIf{$\xi \leq b$}
		{
			{add two arcs from $\mt{u}$ to the terminal node $\mt{t}$ with label values $\mc{D}^j_i \lb$ and $\mc{D}^j_i \ub$}				
			
		}
	}
	
\end{algorithm}

\begin{proposition} \label{prop:nonseparable convex hull}
	Consider $\mc{G} = \left\{\vc{x} \in \prod_{i=1}^{n} \mc{D}_i \, \middle| \, g(\vc{x}) \leq b \right\}$, where $g(\vc{x}) = \sum_{j=1}^q g_j(\vc{x}_{H_j})$ such that $g_j(\vc{x}_{H_j}):\Re^{|H_j|} \to \Re$ is a non-separable function that contains variables with indices in $H_j \subseteq [n]$.
	Let $\mt{D}$ be the DD constructed via Algorithm~\ref{alg:nonseparable DD} for some sub-domain partitions $\mc{D}^j_i$ with $j \in L_i$ for $i \in [n]$. Then, $\conv(\mathcal{G}) \subseteq \conv(\Sol(\mt{D}))$.
\end{proposition}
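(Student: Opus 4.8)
The plan is to mirror the structure of the proof of Proposition~\ref{prop:separable convex hull}, since the only substantive change lies in how state values are accumulated for non-separable terms. As before, it suffices to show $\mc{G} \subseteq \conv(\Sol(\mt{D}))$, because $\conv(\mc{G})$ is the smallest convex set containing $\mc{G}$. First I would fix $\bar{\vc{x}} \in \mc{G}$, so that $\sum_{k=1}^q g_k(\bar{\vc{x}}_{H_k}) \leq b$, and for each $i \in [n]$ select an index $j^*_i \in L_i$ with $\bar{x}_i \in \mc{D}^{j^*_i}_i$; such an index exists because the sub-domain partitions span $\mc{D}_i$ and $\bar{x}_i \in \mc{D}_i$.

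Next I would establish, by induction on the layer index as in Proposition~\ref{prop:separable convex hull}, that $\mt{D}$ contains a node sequence $\{\mt{u}_1, \dotsc, \mt{u}_{n+1}\}$ with $\mt{u}_i \in \mt{U}_i$ in which consecutive nodes $\mt{u}_i$ and $\mt{u}_{i+1}$ are joined by two arcs labeled $\mc{D}^{j^*_i}_i \lb$ and $\mc{D}^{j^*_i}_i \ub$. For the intermediate layers $i < n$, the $i<n$ branch of Algorithm~\ref{alg:nonseparable DD} creates the node $\mt{v}$ and its incoming arcs unconditionally, so the sequence always extends without any feasibility test. Hence the only place where the extension could fail is at the terminal layer, where the condition $\xi \leq b$ of the terminal (\textbf{ElseIf}) branch must hold.

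The heart of the argument — and the step I expect to be the main obstacle — is verifying this terminal condition in the non-separable setting, where the contributions $\eta_k$ are no longer per-variable lower bounds but rather lower bounds of each block $g_k$ evaluated over a box built from relative sub-domains. Telescoping the recursion $\xi = \mt{s}(\mt{u}) + \sum_{k=1}^q \eta_k$ along the path, and recalling that $\eta_k = 0$ unless $i = H_k^{\text{max}}$, the terminal state value equals $\sum_{k=1}^q \eta_k$, where each $\eta_k$ is the lower bound computed at its layer $i = H_k^{\text{max}}$ over the box $\mc{D}^{j^*_i}_i \times \prod_{l \in H_k \setminus \{i\}} \mc{D}_l(\mt{u}_i)$. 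The key observation is that $\bar{\vc{x}}_{H_k}$ lies in this box: the arcs entering layer $l < i$ along the constructed path carry labels $\mc{D}^{j^*_l}_l \lb$ and $\mc{D}^{j^*_l}_l \ub$, so by Definition~\ref{def:backtrack} the relative sub-domain satisfies $[\mc{D}^{j^*_l}_l \lb, \mc{D}^{j^*_l}_l \ub] \subseteq \mc{D}_l(\mt{u}_i)$; since $\bar{x}_l \in \mc{D}^{j^*_l}_l$ this gives $\bar{x}_l \in \mc{D}_l(\mt{u}_i)$ for every $l \in H_k \setminus \{i\}$, and together with $\bar{x}_i \in \mc{D}^{j^*_i}_i$ it places $\bar{\vc{x}}_{H_k}$ inside the box over which $\eta_k$ is a valid lower bound. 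Consequently $\eta_k \leq g_k(\bar{\vc{x}}_{H_k})$ for each $k$, and summing yields $\xi = \sum_{k=1}^q \eta_k \leq \sum_{k=1}^q g_k(\bar{\vc{x}}_{H_k}) \leq b$, so the terminal arcs are indeed created and the sequence extends to $\mt{u}_{n+1} = \mt{t}$.

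Finally, I would close exactly as in the separable case. The $2^n$ paths obtained by independently choosing the lower- or upper-label arc between each consecutive pair $\mt{u}_i, \mt{u}_{i+1}$ encode the vertices of the hyper-rectangle $\prod_{i=1}^n [\mc{D}^{j^*_i}_i \lb, \mc{D}^{j^*_i}_i \ub]$, all of which belong to $\Sol(\mt{D})$. Since $\bar{\vc{x}}$ lies inside this hyper-rectangle, it is a convex combination of these vertices, and therefore $\bar{\vc{x}} \in \conv(\Sol(\mt{D}))$, which completes the argument.
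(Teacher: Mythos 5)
Your proof is correct and follows essentially the same route as the paper's: the same reduction to $\mc{G} \subseteq \conv(\Sol(\mt{D}))$, the same node sequence with paired lower/upper-label arcs, the same key observation that $\bar{x}_l \in \mc{D}_l(\mt{u}_i)$ via the inclusion $[\mc{D}^{j^*_l}_l \lb, \mc{D}^{j^*_l}_l \ub] \subseteq \mc{D}_l(\mt{u}_i)$, and the same hyper-rectangle convex-combination finish. The only difference is cosmetic: you telescope the state-value recursion to get $\xi^* = \sum_{k=1}^q \eta_k$ directly, whereas the paper proves the intermediate bound $\mt{s}(\mt{u}_i) \leq \sum_{k : H_k^{\text{max}} < i} g_k(\bar{\vc{x}}_{H_k})$ by an explicit layer induction.
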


\begin{proof}
    \black{It is sufficient to prove that $\mathcal{G}\subseteq \conv(\Sol(\mt{D}))$. Fix any $\bar{\vc{x}}\in\mathcal{G}$.}
	It follows from the definition of $\mathcal{G}$ that $\sum_{k=1}^q g_k(\bar{\vc{x}}_{H_k}) \leq b$. 	
	For each $i \in [n]$, let $j^*_i$ be the index of a sub-domain partition $\mc{D}^{j^*_i}_i$ in $L_i$ such that $\bar{x}_i \in \mc{D}^{j^*_i}_i$. 
	This index exists because $\bar{x}_i \in \mc{D}_i = \bigcup_{j \in L_i} \mc{D}^j_i$, where the inclusion follows from the fact that $\bar{\vc{x}} \in \mathcal{G}$, and the equality follows from the definition of sub-domain partitions.	 
	
	\smallskip	
	Nest, we show that $\mt{D}$ includes a node sequence $\{\mt{u}_1, \mt{u}_2, \dotsc, \mt{u}_{n+1}\}$, where $\mt{u}_i \in \mt{U}_i$ for $i \in [n+1]$, such that each node $\mt{u}_i$ is connected to $\mt{u}_{i+1}$ via two arcs with labels $\mc{D}^{j^*_i}_i \lb$ and $\mc{D}^{j^*_i}_i \ub$ for each $i \in [n]$.
    \black{The root-to-terminal paths of this node sequence encode the vertices of the hyper-rectangle $\prod_{i=1}^n [\mc{D}_i^{j_i^*}\lb,\mc{D}_i^{j_i^*}\ub]$,
	which contains $\bar{\vc{x}}$ by construction. It then follows that
	$\bar{\vc{x}} \in \conv(\Sol(\mt{D}))$.}

	\smallskip
    \black{The argument rests on the following bound on the state value along this sequence.}
	We use induction on layer $i \in [n]$ to prove $\mt{s}(\mt{u}_i) \leq \sum_{k \in [q]: H_k^{\text{max}} < i}g_k(\bar{\vc{x}}_{H_k})$. 
	In words, the state value of each node $\mt{u}_i$ in the previously picked sequence is no greater than the summation of non-separable function terms $g_k(\bar{\vc{x}}_{H_k})$ whose variables have already been visited in the previous layers of the DD.
	The induction base for $i=1$ follows from line 1 of Algorithm~\ref{alg:nonseparable DD} because $\mt{s}(\mt{u}_1) = \mt{s}(\mt{r}) = 0$, and the fact that $H_k^{\text{max}} > 0$ for all $k \in [q]$, implying that $\sum_{k \in [q]: H_k^{\text{max}} < i}g_k(\bar{\vc{x}}_{H_k}) = 0$ by default.
	For the inductive hypothesis, assume that $\mt{s}(\mt{u}_{\hat{i}}) \leq \sum_{k \in [q]: H_k^{\text{max}} < \hat{i}}g_k(\bar{\vc{x}}_{H_k})$ for $\hat{i} \in [n-1]$.
	For the inductive step, we show that $\mt{s}(\mt{u}_{\hat{i}+1}) \leq \sum_{k \in [q]: H_k^{\text{max}} < \hat{i}+1}g_k(\bar{\vc{x}}_{H_k})$.
	It follows from lines 9--11 of Algorithm~\ref{alg:nonseparable DD} that $\mt{s}(\mt{u}_{\hat{i}+1}) = \mt{s}(\mt{u}_{\hat{i}}) + \sum_{k=1}^q \eta_k$, where $\eta_k$ is a lower bound for $g_k(\vc{x}_{H_k})$ for all $x_{\hat{i}} \in \mc{D}^{j^*_{\hat{i}}}_{\hat{i}}$ and $x_l \in \mc{D}_l(\mt{u}_{\hat{i}})$ for all $l \in H_k \setminus \{\hat{i}\}$.
	Furthermore, we have that $\sum_{k \in [q]: H_k^{\text{max}} < \hat{i}+1}g_k(\bar{\vc{x}}_{H_k}) = \sum_{k \in [q]: H_k^{\text{max}} < \hat{i}}g_k(\bar{\vc{x}}_{H_k}) + \sum_{k \in [q]: H_k^{\text{max}} = \hat{i}}g_k(\bar{\vc{x}}_{H_k})$.
	Therefore, to prove the relation in the inductive step, \black{it remains to verify that} $\sum_{k=1}^q \eta_k \leq \sum_{k \in [q]: H_k^{\text{max}} = \hat{i}}g_k(\bar{\vc{x}}_{H_k})$, because $\mt{s}(\mt{u}_{\hat{i}}) \leq \sum_{k \in [q]: H_k^{\text{max}} < \hat{i}}g_k(\bar{\vc{x}}_{H_k})$ by the inductive hypothesis.
	We can rewrite the right-hand-side $\sum_{k \in [q]: H_k^{\text{max}} = \hat{i}}g_k(\bar{\vc{x}}_{H_k})$ of the above inequality as $\sum_{k=1}^q \phi_k $ where $\phi_k = g_k(\bar{\vc{x}}_{H_k})$ if $H_k^{\text{max}} = \hat{i}$, and $\phi_k = 0$ otherwise.
	As a result, \black{the desired inequality follows once} $\eta_k \leq \phi_k$ for each $k \in [q]$.
	There are two cases.
	
	\smallskip
	For the first case, assume that $H_k^{\text{max}} \neq \hat{i}$. 
	Then, it follows from line 7--8 of Algorithm~\ref{alg:nonseparable DD} that $\eta_k = 0$.
	Additionally, the definition of $\phi_k$ implies that $\phi_k = 0$, which proves the desired inequality.
	
	\smallskip
	For the second case, assume that $H_k^{\text{max}} = \hat{i}$.
	It follows from the arguments in the first paragraph of the proof that $\bar{x}_i \in \mc{D}^{j^*_i}_i$ for all $i \in [n]$.
	Further, we have established that $\{\mt{u}_1, \mt{u}_2, \dotsc, \mt{u}_{\hat{i}}\}$ is a node sequence such that each node $\mt{u}_l$ is connected to $\mt{u}_{l+1}$ via two arcs $\mt{a}^1_l$ and $\mt{a}^2_l$ with labels $\mc{D}^{j^*_l}_l \lb$ and $\mc{D}^{j^*_l}_l \ub$ for $l = 1, \dotsc, \hat{i} - 1$.
	By definition of $H_k^{\text{max}}$, we must have that $H_k \setminus \{\hat{i}\} \subseteq \{1, \dotsc, \hat{i} - 1\}$.
	Therefore, for each $l \in H_k \setminus \{\hat{i}\}$, the arcs $\mt{a}^1_l$ and $\mt{a}^2_l$ must be in $\mt{A}_l(\mt{u}_{\hat{i}})$ because they are on some paths from $\mt{r}$ to $\mt{u}_{\hat{i}}$.
	We can write that $\mc{D}_l(\mt{u}_{\hat{i}}) \lb \, \leq \mc{D}^{j^*_l}_l \lb \, \leq \bar{x}_l$, where the first inequality follows from the definition of $\mc{D}_l(\mt{u}_{\hat{i}})$ in Definition~\ref{def:backtrack}, and the second inequality follows from the fact that $\bar{x}_l \in \mc{D}^{j^*_l}_l$.
	Similarly, we can write that $\mc{D}_l(\mt{u}_{\hat{i}}) \ub \, \geq \mc{D}^{j^*_l}_l \ub \, \geq \bar{x}_l$.
	This yields $\bar{x}_l \in \mc{D}_l(\mt{u}_{\hat{i}})$.
	Additionally, we have argued that $\bar{x}_{\hat{i}} \in \mc{D}^{j^*_{\hat{i}}}_{\hat{i}}$.
	Therefore, it follows from the definition of $\eta_k$ that $\eta_k \leq g_k(\bar{\vc{x}}_{H_k}) = \phi_k$, obtaining the desired inequality.
	
	\smallskip
	\black{It remains to verify that} the condition in line 14 of Algorithm~\ref{alg:nonseparable DD} is satisfied, i.e., $\xi^* = \mt{s}(\mt{u}_{n}) + \sum_{k=1}^q \eta_k^* \leq b$, where we use $^*$ to distinguish the values of $\xi$ and $\eta_k$ calculated at the last layer $i = n$ from those calculated in the previous part for layers $i < n$.
	It follows from the above induction result that $\mt{s}(\mt{u}_{n}) \leq \sum_{k \in [q]: H_k^{\text{max}} < n}g_k(\bar{\vc{x}}_{H_k})$.
	\black{The same argument as in the induction step yields that}, for each $k \in [q]$, we have $\eta^*_k \leq g_k(\bar{\vc{x}}_{H_k})$ if $H_k^{\text{max}} = n$, and $\eta^*_k = 0$ otherwise.
	Aggregating the above two inequalities, we obtain that $\mt{s}(\mt{u}_{n}) + \sum_{k=1}^q \eta^*_k \leq \sum_{k \in [q]: H_k^{\text{max}} < n}g_k(\bar{\vc{x}}_{H_k}) + \sum_{k \in [q]: H_k^{\text{max}} = n}g_k(\bar{\vc{x}}_{H_k})$.
	The right-hand-side of this inequality can be rewritten as $\sum_{k \in [q]}g_k(\bar{\vc{x}}_{H_k})$ because $H_k^{\text{max}} \leq n$ for all $k \in [q]$ by definition.
	On the other hand, the definition of $\mathcal{G}$ implies that $\sum_{k=1}^q g_k(\bar{\vc{x}}_{H_k}) \leq b$.
	Combining the above inequalities, we conclude that $\mt{s}(\mt{u}_{n}) + \sum_{k=1}^q \eta^*_k = \xi^* \leq b$, which satisfies the condition in line 14 of Algorithm~\ref{alg:nonseparable DD}.
	Therefore, line 15 of Algorithm~\ref{alg:nonseparable DD} implies that two arcs with label values $\mc{D}^{j^*_n}_n \lb$ and $\mc{D}^{j^*_n}_n \ub$ connect node $\mt{u}_n$ to the terminal node $\mt{t}$ which can be considered as $\mt{u}_{n+1}$, completing the desired node sequence.
	
	\smallskip	
	Now consider the collection of points $\tilde{\vc{x}}^{\kappa}$ for $\kappa \in [2^n]$ encoded by all paths composed of the above-mentioned pair of arcs with labels $\mc{D}^{j^*_i}_i \lb$ and $\mc{D}^{j^*_i}_i \ub$ between each two consecutive nodes $\mt{u}_i$ and $\mt{u}_{i+1}$ in the sequence $\{\mt{u}_1, \mt{u}_2, \dotsc, \mt{u}_{n+1}\}$.
	Therefore, $\tilde{\vc{x}}^{\kappa} \in \Sol(\mt{D})$ for $\kappa \in [2^n]$.
	It is clear that these points form the vertices of an $n$-dimensional hyper-rectangle defined by $\prod_{i=1}^n [\mc{D}^{j^*_i}_i \lb, \mc{D}^{j^*_i}_i \ub]$.
	By construction, we have that $\bar{\vc{x}} \in \prod_{i=1}^n [\mc{D}^{j^*_i}_i \lb, \mc{D}^{j^*_i}_i \ub]$, i.e., $\bar{\vc{x}}$ is a point inside the above hyper-rectangle.
	As a result, $\bar{\vc{x}}$ can be represented as a convex combination of the vertices $\tilde{\vc{x}}^{\kappa}$ for $\kappa \in [2^n]$ of the hyper-rectangle, yielding $\bar{\vc{x}} \in \conv(\Sol(\mt{D}))$. \\		  
		\qed       
\end{proof}

The following example illustrates the steps of Algorithm~\ref{alg:nonseparable DD} for an MINLP set with a non-separable term.

{
\color{black}
\begin{example} \label{ex:nonseparable}
	Consider a nonconvex MINLP set $\mc{G} = \left\{\vc{x} \in \prod_{i=1}^{3} \mc{D}_i \, \middle| \, -x_1^2 - x_2 - x_1x_3 \leq -2 \right\}$, where $\mc{D}_1 = \{0, 1, 2\}$, $\mc{D}_2 = \{0, 1\}$, and $\mc{D}_3 = [1, 2]$.
	Following the definition of sets studied in this section, we have $g(\vc{x}) = g_1(x_1) + g_2(x_2) + g_3(x_1, x_3)$ where $g_1(x_1) = -x_1^2$, $g_2(x_2) = -x_2$, and $g_3(x_1, x_3)  = -x_1x_3$.
	Further, we obtain $H_1^{\text{max}} = 1$, $H_2^{\text{max}} = 2$, and $H_3^{\text{max}} = 3$.
	The feasible region of $\mc{G}$ can be represented as the following union of hyper-rectangles: $\mc{G} =  \big\{\{1\} \times \{0\} \times [1, 2], \{1\} \times \{1\} \times [1, 2], \{2\} \times \{0\} \times [1, 2], \{2\} \times \{1\} \times [1, 2] \big\}$.
	Therefore, we can obtain the convex hull of this set as $\conv(\mc{G}) = [1, 2] \times [0, 1] \times [1, 2]$.
	
	\smallskip 
	To obtain a relaxation for this set based on DDs, we apply Algorithm~\ref{alg:nonseparable DD} with sub-domain partitions $\mc{D}^1_1 = [0, 0]$, $\mc{D}^2_1 = [1, 1]$, $\mc{D}^3_1 = [2, 2]$ for $x_1$ as a discrete variable, $\mc{D}^1_2 = [0, 0]$, $\mc{D}^2_2 = [1, 1]$ for $x_2$ as a binary variable, and $\mc{D}^1_3 = [1, 2]$ for $x_3$ as a continuous variable.
	Following the steps of the algorithm, we obtain the DD presented in Figure~\ref{fig:nonseparable}, where the numbers next to each arc represent the arc label, and the numbers inside each node show the state value of the node.
	To illustrate the state value computation of non-separable terms, consider the node with state value $0$ at node layer 3, which we refer to as $\mt{u} \in \mt{U}_3$ with $\mt{s}(\mt{u}) = 0$.
	In line 6 of the algorithm, a lower bound $\eta_3$ for $g_3(x_1, x_3)$ must be calculated over the domain $x_3 \in \mc{D}^1_3$ and $x_1 \in \mc{D}_1(\mt{u})$.
	Since there is one path from the root node to $\mt{u}$ with arc label $0$ for variable $x_1$, we obtain $\mc{D}_1(\mt{u}) = [0,0]$.
	Therefore, $\eta_3$ can be set to 0.
	Similarly, we set $\eta_1 = \eta_2 = 0$ in line 8 of the algorithm because $H_1^{\text{max}} \neq 3$ and $H_2^{\text{max}} \neq 3$.
	Therefore, we obtain $\xi = \mt{s}(\mt{u}) + \sum_{i=1}^3 \eta_3 = 0$.
	Since $\xi \nleq -2$, the condition in line 14 of the algorithm is not satisfied, implying that node $\mt{u}$ is not connected to the terminal node.
	
	\smallskip
	Now consider the node with state value $-1$ at node layer 3, which we refer to as $\mt{v} \in \mt{U}_3$ with $\mt{s}(\mt{v}) = -1$.
	In line 6 of the algorithm, a lower bound $\eta_3$ for $g_3(x_1, x_3)$ must be calculated over the domain $x_3 \in \mc{D}^1_3$ and $x_1 \in \mc{D}_1(\mt{v})$.
	Since there are two paths from the root node to $\mt{v}$ with arc labels 0 and 1 for variable $x_1$, we obtain $\mc{D}_1(\mt{v}) = [0,1]$.
	Therefore, $\eta_3$ can be set to $-2$.
	Similarly, we set $\eta_1 = \eta_2 = 0$ in line 8 of the algorithm because $H_1^{\text{max}} \neq 3$ and $H_2^{\text{max}} \neq 3$.
	Therefore, we obtain $\xi = \mt{s}(\mt{v}) + \sum_{i=1}^3 \eta_3 = -3$.
	Since $\xi \leq -2$, the condition in line 14 of the algorithm is satisfied, implying that node $\mt{v}$ is connected to the terminal node by two arcs with labels $0$ and $1$.
	The calculation for other nodes can be carried out similarly, which yields the DD $\mt{D}$ in Figure~\ref{fig:nonseparable_1}.
	Since there is a node at layer 3 that is not connected to the terminal node, we can reduce the size of the DD by removing that node and its incoming arc as it does not lead to a feasible path, which yields a reduced DD $\mt{\bar{D}}$ in Figure~\ref{fig:nonseparable_2}.	
	It is easy to verify that $\conv(\Sol(\mt{\bar{D}})) = \big\{(x_1, x_2, x_3) \in [0, 2] \times [0, 1] \times [1, 2] \, \big| \, x_1 + x_2 \ge 1 \big\}$.
	This set provides a relaxation for the convex hull of the original set $\mc{G}$ with strict inclusion, i.e., $\conv(\mc{G}) \subset \conv(\Sol(\mt{\bar{D}}))$.
	$\blacksquare$
\end{example}
}

\begin{figure}[h]
	\centering
	\begin{subfigure}[b]{0.47\linewidth} 
		\centering
		\includegraphics[scale=1,width=\textwidth]{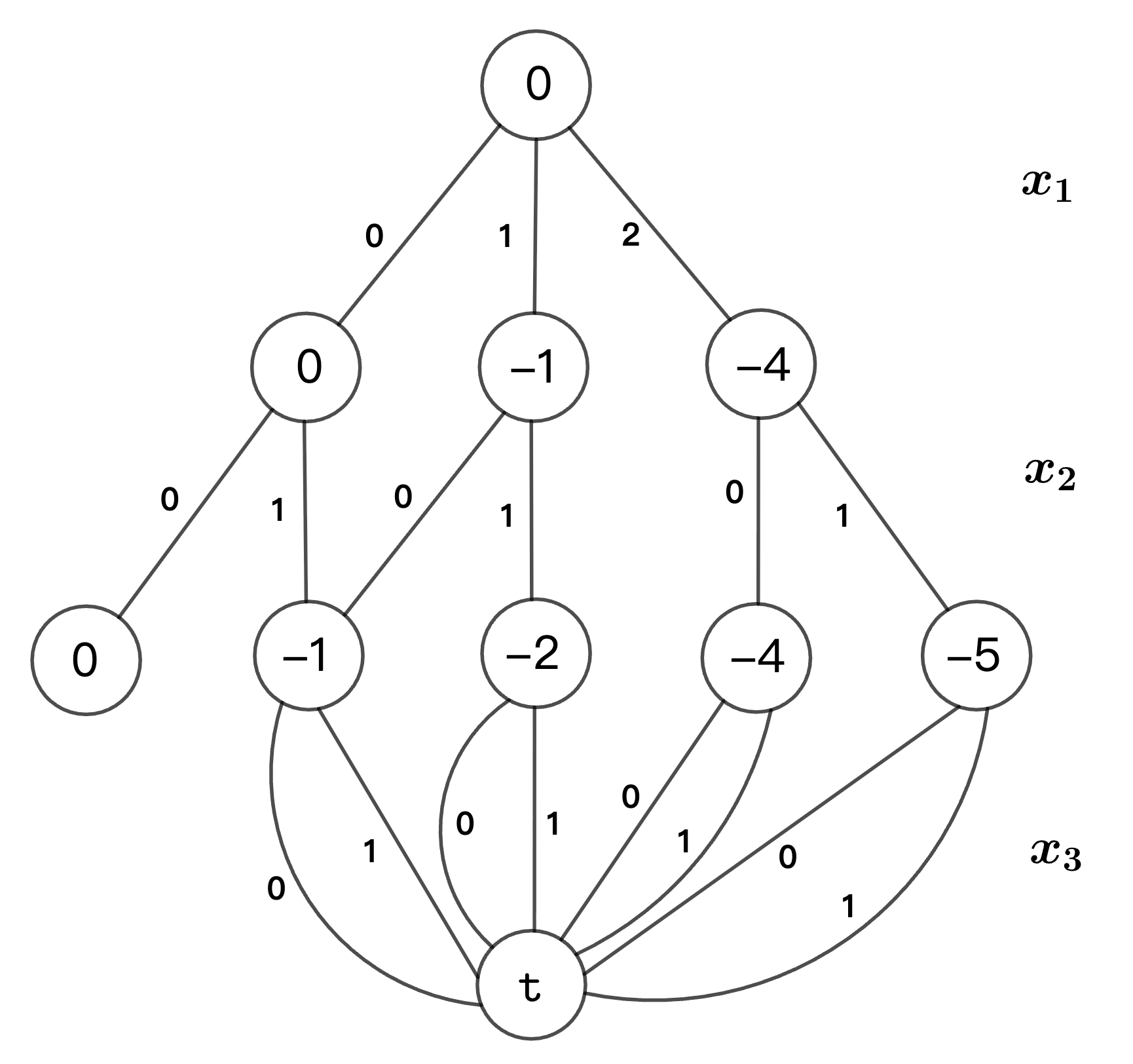}
		\caption{Original DD representation}
		\label{fig:nonseparable_1}
	\end{subfigure}
	\hspace{0.03\linewidth}
	\begin{subfigure}[b]{0.47\linewidth} 
		\centering
		\includegraphics[scale=1,width=\textwidth]{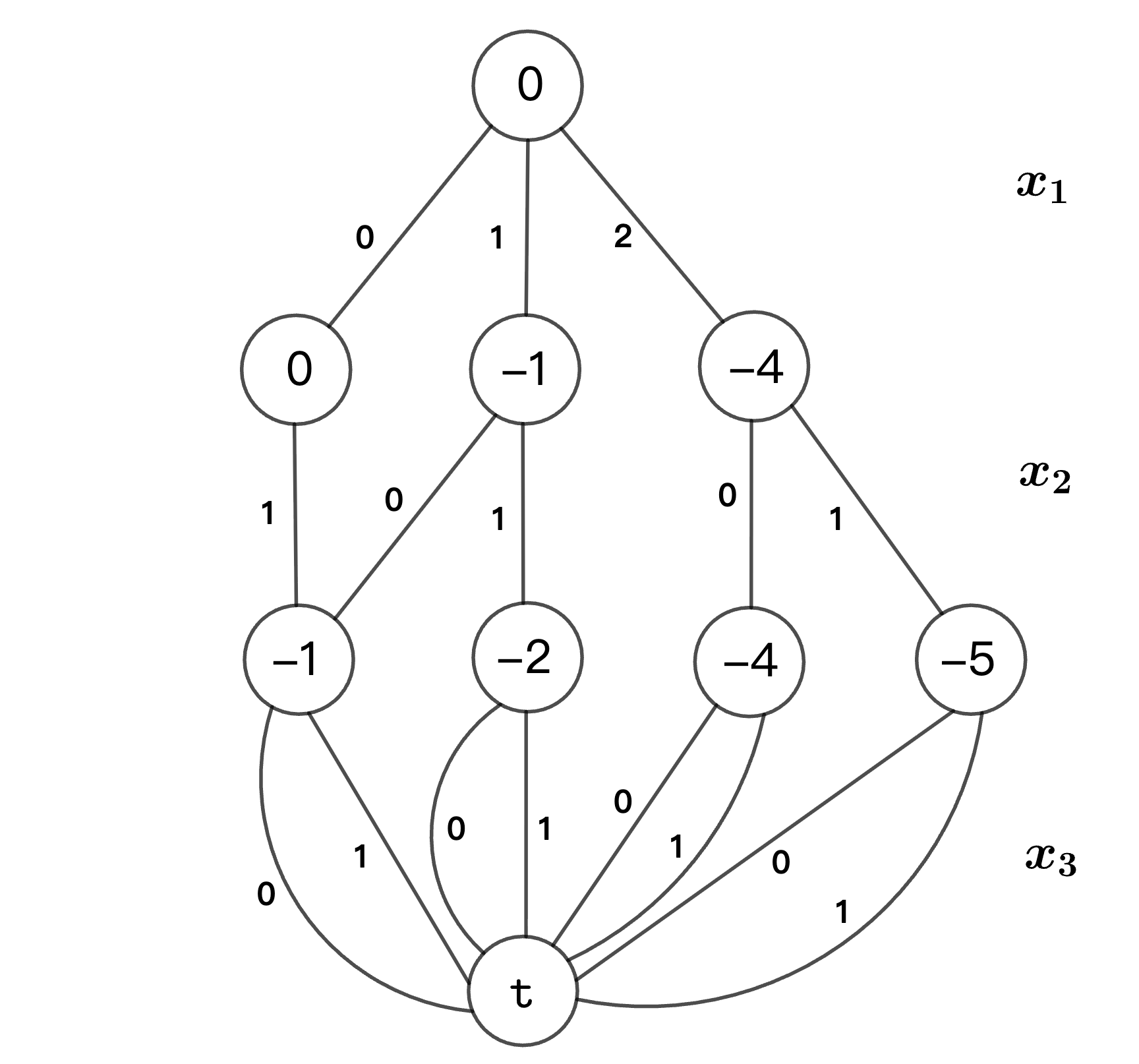}
		\caption{Reduced DD representation}
		\label{fig:nonseparable_2}
	\end{subfigure}
	\caption{Relaxed DD for the set in Example~\ref{ex:nonseparable}.}
	\label{fig:nonseparable}
\end{figure}


According to Algorithm~\ref{alg:nonseparable DD}, the number of nodes at layer $k+1$ for $k \in [n-1]$ of the DD obtained from this algorithm is bounded by $|\mt{U}_k||L_k|$, where $|\mt{U}_k|$ is the number of nodes at layer $k$, and $L_k$ is the number of sub-domain partitions for variable $x_k$.
As a result, the size of this DD grows exponentially as the number of layers increases. 
To control this growth rate, we can use two approaches as outlined below.

\smallskip

The first approach involves controlling the size of the DD by adjusting the number of sub-domain partitions for variables at certain layers.
For instance, assume that there is an imposed width limit of $\omega$ at layer $k+1$, for some $k \in [n]$ in the DD.
To satisfy this width limit at layer $k+1$, we can set the number of sub-domain partitions at layer $k$ to be no greater than $\frac{\omega}{|\mt{U}_k|}$, ensuring that the number of nodes at layer $k+1$ does not exceed $\omega$.

\smallskip

The second approach to controlling the size of the DD involves creating a ``relaxed DD'' by \textit{merging} nodes at layers where the size exceeds the width limit $\omega$.
In this process, multiple nodes in a layer are merged into a single node in such a way that all feasible paths of the original DD are preserved. 
This merging process consists of creating a new node that replaces the merged nodes while inheriting their incoming arcs; see Section~\ref{sub:merging} for detailed steps. 
For the DDs constructed using Algorithm~\ref{alg:nonseparable DD}, setting the state value of the new node as the minimum of state values of the merged nodes ensures that all feasible paths of the original DD are preserved.
Such a merging operation is executed within a \textit{merging oracle} after the nodes in each DD layer are created to ensure that the prescribed width limit is satisfied.
We denote this merging oracle by $\mt{Merge}(\omega,\mt{V})$, where $\omega$ is the width limit, and $\mt{V} = \{\mt{v}_{p_1}, \mt{v}_{p_2}, \dotsc, \mt{v}_{p_t}\}$, for some $t \in \N$, is the set of nodes at a particular layer of the DD.
Applying $\mt{Merge}(\omega,\mt{V})$ would merge all the nodes in $\mt{V}$ into a node $\tilde{\mt{v}}$ with state value $\mt{s}(\tilde{\mt{v}}) = \min_{j=1,\dotsc, t} \{\mt{s}(\mt{v}_{p_j})\}$.
The choice of the subsets $\mt{V}$ whose nodes are to be merged by the merging oracle is determined by a \textit{merging policy}.
In Section~\ref{sub:merging}, we present two general and effective policies that can be utilized within this oracle.

Algorithm~\ref{alg:relaxed nonseparable DD} incorporates a merging oracle after the nodes in each DD layer are created to ensure that the prescribed width limit is satisfied.
\black{The proof of correctness follows a construction similar to that in the proof of Proposition~\ref{prop:nonseparable convex hull}, with the additional step of accounting for the impact of merging on node state values.}
The correctness of this algorithm in providing a relaxation for the convex hull of the solution set described by a non-separable function is established in Theorem~\ref{thm:nonseparable relaxed 2}.
This method is further illustrated in Example~\ref{ex:nonseparable2}.

\begin{algorithm}[!ht]
	\caption{Relaxed DD for a non-separable constraint with merging operation}
	\label{alg:relaxed nonseparable DD}
	\KwData{Set $\mc{G} = \left\{\vc{x} \in \prod_{i=1}^{n} \mc{D}_i \, \middle| \, g(\vc{x}) \leq b \right\}$ with $g(\vc{x}) = \sum_{k=1}^q g_k(\vc{x}_{H_k})$ where $g_k(\vc{x}_{H_k}):\Re^{|H_k|} \to \Re$ is a non-separable function that contains variables with indices in $H_k$, the sub-domain partitions $\mc{D}^j_i$ with $j \in L_i$ for $i \in [n]$, a width limit $\omega$, and a merging oracle $\mt{Merge}(\omega,\mt{V})$}
	\KwResult{A DD $\mt{D}$}
	
	{create the root node $\mt{r}$ in the node layer $\mt{U}_1$ with state value $\mt{s}(\mt{r}) = 0$}\\ 
	{create the terminal node $\mt{t}$ in the node layer $\mt{U}_{n+1}$}
	
	\ForAll{$i \in [n]$}
	{
		\ForAll{$\mt{u} \in \mt{U}_i$, $j \in L_i$}
		{
			\ForAll{$k \in [q]$}
			{
				\If{$i = H_k^{\text{max}}$}
				{
					{calculate $\eta_k$ such that $\eta_k \leq g_k(\vc{x}_{H_k})$ for all $x_i \in \mc{D}^j_i$ and $x_l \in \mc{D}_l(\mt{u})$ for $l \in H_k \setminus \{i\}$}				
				}
				\Else
				{
					calculate $\eta_k = 0$
				}
			}
			
			{calculate $\xi = \mt{s}(\mt{u}) + \sum_{k=1}^q \eta_k$}
			
			\If{$i < n$}
			{
				{create a node $\mt{v}$ with state value $\mt{s}(\mt{v}) = \xi$ (if it does not already exist) in the node layer $\mt{U}_{i+1}$}\\
				{add two arcs from $\mt{u}$ to $\mt{v}$ with label values $\mc{D}^j_i \lb$ and $\mc{D}^j_i \ub$}\\	
				{update $\mc{D}_k(\mt{v})$ for all $k$ such that $k \in H_l$ for some $l > i$}
			}
			\ElseIf{$\xi \leq b$}
			{
				{add two arcs from $\mt{u}$ to the terminal node $\mt{t}$ with label values $\mc{D}^j_i \lb$ and $\mc{D}^j_i \ub$}				
				
			}
		}
		
		\If{$i < n$}
		{
			{merge nodes at this layer to satisfy the prescribed width limit by invoking the merging oracle $\mt{Merge}(\omega,\mt{U}_{i+1})$}	
		}
	}
	
\end{algorithm}

\begin{theorem} \label{thm:nonseparable relaxed 2}
	Consider $\mc{G} = \left\{\vc{x} \in \prod_{i=1}^{n} \mc{D}_i \, \middle| \, g(\vc{x}) \leq b \right\}$ with $g(\vc{x}) = \sum_{j=1}^q g_j(\vc{x}_{H_j})$ where $g_j(\vc{x}_{H_j}):\Re^{|H_j|} \to \Re$ is a non-separable function that contains variables with indices in $H_j \subseteq [n]$. 
	Let $\mt{D}$ be the DD constructed via Algorithm~\ref{alg:relaxed nonseparable DD} for some sub-domain partitions $\mc{D}^j_i$ with $j \in L_i$ for $i \in [n]$, width limit $\omega$, and merging oracle $\mt{Merge}(\omega,\mt{V})$. Then, $\conv(\mathcal{G}) \subseteq \conv(\Sol(\mt{D}))$.\\
\end{theorem}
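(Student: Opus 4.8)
The plan is to mirror the structure of the proof of Theorem~\ref{thm:separable relaxed 2}, reducing the statement to a single elementary-merge inclusion combined with the no-merge relaxation result of Proposition~\ref{prop:nonseparable convex hull}. Concretely, I would decompose the merging performed by the oracle call $\mt{Merge}(\omega,\mt{U}_{i+1})$ at each layer $i \in [n-1]$ into elementary operations $O^i_1,\dots,O^i_{p_i}$, where $O^i_j$ merges a subset $V^i_j \subseteq \mt{U}_{i+1}$ into a node $\tilde{\mt{v}}^i_j$ with $\mt{s}(\tilde{\mt{v}}^i_j) = \min_{\mt{v}\in V^i_j}\{\mt{s}(\mt{v})\}$ and the $V^i_j$ mutually exclusive for fixed $i$. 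I would then build a chain $\mt{D}^0_0, \mt{D}^1_1,\dots,\mt{D}^{n-1}_{p_{n-1}}$, where $\mt{D}^0_0$ is produced by Algorithm~\ref{alg:nonseparable DD} with no merging and each successive diagram applies exactly one more elementary merge, so that $\mt{D}^{n-1}_{p_{n-1}}$ coincides with $\mt{D}$. If each elementary step satisfies $\Sol(\text{previous}) \subseteq \Sol(\text{next})$, then chaining gives $\Sol(\mt{D}^0_0) \subseteq \Sol(\mt{D})$; combining $\conv(\mc{G}) \subseteq \conv(\Sol(\mt{D}^0_0))$ from Proposition~\ref{prop:nonseparable convex hull} with $\conv(\Sol(\mt{D}^0_0)) \subseteq \conv(\Sol(\mt{D}))$ then yields the result.

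The core step is the non-separable analogue of Proposition~\ref{prop:separable relaxed}: if $\bar{\mt{D}}$ is obtained from $\mt{D}$ by one elementary merge at layer $k+1$, then $\Sol(\mt{D}) \subseteq \Sol(\bar{\mt{D}})$. First I would fix $\hat{\vc{x}} \in \Sol(\mt{D})$ encoded by an arc sequence $\{\mt{a}_1,\dots,\mt{a}_n\}$ and build a matching sequence $\{\bar{\mt{a}}_1,\dots,\bar{\mt{a}}_n\}$ in $\bar{\mt{D}}$ carrying identical labels, exactly as in the separable argument; the diagrams agree everywhere except possibly at layer $k+1$, where the tracked node becomes a merged node $\tilde{\mt{v}}$. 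Two facts drive the comparison. The min-rule gives $\mt{s}(\tilde{\mt{v}}) \leq \mt{s}(\mt{u}_{k+1})$, so the accumulated state value drops at the merge. Moreover, since $\tilde{\mt{v}}$ inherits the incoming arcs of all merged nodes, the recursive relations \eqref{eq:relative domain:3}--\eqref{eq:relative domain:4} of Proposition~\ref{prop:relative domain} force the relative sub-domains to only widen, i.e.\ $\mc{D}_l(\bar{\mt{u}}_i) \supseteq \mc{D}_l(\mt{u}_i)$ for every earlier coordinate $l$ and every downstream node $\bar{\mt{u}}_i$ on the matched path.

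The hard part will be precisely this interaction between merging and the relative sub-domains, which has no counterpart in the separable (Markovian) setting: in Algorithm~\ref{alg:nonseparable DD} the lower bound $\eta_k$ of a term $g_k(\vc{x}_{H_k})$ realized at layer $H_k^{\text{max}}$ is computed over the relative sub-domains $\mc{D}_l(\mt{u})$ of its earlier variables, so widening those intervals must not break feasibility. I would resolve this through the inclusion-monotonicity of the lower-bounding rule (satisfied, e.g., by exact minimization or interval arithmetic): since $\mc{D}_l(\bar{\mt{u}}_i) \supseteq \mc{D}_l(\mt{u}_i)$, the bound over the wider domain obeys $\bar{\eta}_k \leq \eta_k$. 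Accumulating these inequalities together with the state-value drop at the merge yields $\bar{\xi}^* \leq \xi^* \leq b$ at layer $n$, so the terminal-connection condition $\xi \leq b$ of Algorithm~\ref{alg:relaxed nonseparable DD} still holds and two arcs with labels $\mc{D}^{j^*_n}_n\lb$ and $\mc{D}^{j^*_n}_n\ub$ connect $\bar{\mt{u}}_n$ to $\bar{\mt{t}}$, completing the matched sequence and establishing $\hat{\vc{x}} \in \Sol(\bar{\mt{D}})$. Chaining this single-merge inclusion along $\mt{D}^0_0,\dots,\mt{D}^{n-1}_{p_{n-1}}$, exactly as in the proof of Theorem~\ref{thm:separable relaxed 2}, then delivers $\Sol(\mt{D}^0_0)\subseteq\Sol(\mt{D})$ and hence $\conv(\mc{G})\subseteq\conv(\Sol(\mt{D}))$.
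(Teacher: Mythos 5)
Your proposal rests on a non-separable analogue of Proposition~\ref{prop:separable relaxed}: that a single elementary merge yields $\Sol(\mt{D}) \subseteq \Sol(\bar{\mt{D}})$, so the chain $\mt{D}^0_0, \mt{D}^1_1, \dotsc$ can be glued together. This core lemma is false, and the paper itself makes this point: the sequential relaxation property ``does not apply to the non-separable case,'' and Examples~\ref{ex:nonseparable} and \ref{ex:nonseparable2} are a counterexample. Trace them through your chain. In the unmerged DD $\mt{D}^0_0$ (Example~\ref{ex:nonseparable}), the path $x_1 = 0, x_2 = 0$ reaches the layer-3 node with state $0$; that node also pools the path $x_1 = 1, x_2 = 1$ (nodes are identified by state value), so its relative sub-domain is $\mc{D}_1(\cdot) = [0,1]$, giving $\eta_3 = -1$, $\xi = -1 \leq -1$, and $(0,0,x_3)$ is feasible. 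Now apply the first elementary merge of your chain (layer-2 nodes with states $-1$ and $-4$ merged into state $-4$). In the resulting $\mt{D}^1_1$, the path $(1,1)$ now flows to a layer-3 node with state $-3$, so the layer-3 node with state $0$ reached by $(0,0)$ pools only that one path: its relative sub-domain shrinks to $[0,0]$, giving $\eta_3 = 0$, $\xi = 0 > -1$, and the path $(0,0,x_3)$ is cut off. Hence $\Sol(\mt{D}^0_0) \not\subseteq \Sol(\mt{D}^1_1)$ (in fact $(0,0,0) \notin \conv(\Sol(\mt{D}^1_1))$), and your chain breaks at its very first link.

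The precise false step is the claim that the relative sub-domains ``only widen'' downstream of the merge. That is true at the merged node itself, whose incoming arcs are pooled, but merging also regroups which root-paths feed which nodes in \emph{later} layers; a downstream node on your matched path can pool strictly fewer paths than its counterpart in the unmerged DD and therefore have strictly \emph{narrower} relative sub-domains, which makes $\bar{\eta}_k$ larger, not smaller, so the terminal condition $\bar{\xi}^* \leq b$ can fail. No monotonicity assumption on the lower-bounding rule repairs this, since the domains themselves move the wrong way. This is exactly why the paper's proof of Theorem~\ref{thm:nonseparable relaxed 2} abandons the chaining strategy and argues directly against $\mc{G}$: pick $\bar{\vc{x}} \in \mc{G}$, track in the merged DD itself the node sequence whose arc labels bracket each $\bar{x}_i$, and prove by induction that $\mt{s}(\mt{u}_i) \leq \sum_{k \in [q]: H_k^{\text{max}} < i} g_k(\bar{\vc{x}}_{H_k})$, handling a merged node via $\mt{s}(\mt{u}_{\hat{i}+1}) \leq \mt{s}(\mt{v})$ for the pre-merge node $\mt{v}$. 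There the only property needed is $\bar{x}_l \in \mc{D}_l(\mt{u}_{\hat{i}})$, which survives merging because the bracketing incoming arcs are preserved; no comparison with the unmerged DD's sub-domains is ever required.
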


\begin{proof}
    \black{It is sufficient to prove that $\mathcal{G}\subseteq \conv(\Sol(\mt{D}))$. Fix any $\bar{\vc{x}}\in\mathcal{G}$.}
	It follows from the definition of $\mathcal{G}$ that $\sum_{k=1}^q g_k(\bar{\vc{x}}_{H_k}) \leq b$. 	
	For each $i \in [n]$, let $j^*_i$ be the index of a sub-domain partition $\mc{D}^{j^*_i}_i$ in $L_i$ such that $\bar{x}_i \in \mc{D}^{j^*_i}_i$. 
	This index exists because $\bar{x}_i \in \mc{D}_i = \bigcup_{j \in L_i} \mc{D}^j_i$ where the inclusion follows from the fact that $\bar{\vc{x}} \in \mathcal{G}$, and the equality follows from the definition of sub-domain partitions.	 
	
	\smallskip	
	We show that $\mt{D}$ includes a node sequence $\{\mt{u}_1, \mt{u}_2, \dotsc, \mt{u}_{n+1}\}$, where $\mt{u}_i \in \mt{U}_i$ for $i \in [n+1]$, such that each node $\mt{u}_i$ is connected to $\mt{u}_{i+1}$ via two arcs with labels $\mc{D}^{j^*_i}_i \lb$ and $\mc{D}^{j^*_i}_i \ub$ for each $i \in [n]$.
    \black{The root-to-terminal paths of this node sequence encode the vertices of the hyper-rectangle $\prod_{i=1}^n [\mc{D}_i^{j_i^*}\lb,\mc{D}_i^{j_i^*}\ub]$,
	which contains $\bar{\vc{x}}$ by construction. It then follows that
	$\bar{\vc{x}} \in \conv(\Sol(\mt{D}))$.}
	
	\smallskip
    \black{The argument rests on the following bound on the state value along this sequence.}
	We use induction on layer $i \in [n]$ to prove $\mt{s}(\mt{u}_i) \leq \sum_{k \in [q]: H_k^{\text{max}} < i}g_k(\bar{\vc{x}}_{H_k})$. 
	In words, the state value of each node $\mt{u}_i$ in the previously picked sequence is no greater than the summation of non-separable function terms $g_k(\bar{\vc{x}}_{H_k})$ whose variable have already been visited in the previous layers of the DD.
	The induction base for $i=1$ follows from line 1 of Algorithm~\ref{alg:relaxed nonseparable DD} due to $\mt{s}(\mt{u}_1) = \mt{s}(\mt{r}) = 0$ and the fact that $H_k^{\text{max}} > 0$ for all $k \in [q]$, implying that $\sum_{k \in [q]: H_k^{\text{max}} < i}g_k(\bar{\vc{x}}_{H_k}) = 0$ by default.
	For the inductive hypothesis, assume that $\mt{s}(\mt{u}_{\hat{i}}) \leq \sum_{k \in [q]: H_k^{\text{max}} < \hat{i}}g_k(\bar{\vc{x}}_{H_k})$ for $\hat{i} \in [n-1]$.
	For the inductive step, we show that $\mt{s}(\mt{u}_{\hat{i}+1}) \leq \sum_{k \in [q]: H_k^{\text{max}} < \hat{i}+1}g_k(\bar{\vc{x}}_{H_k})$.
	To calculate $\mt{s}(\mt{u}_{\hat{i}+1})$, we consider two cases for $\mt{u}_{\hat{i}+1}$.
	
	\smallskip
	For the first case, assume that $\mt{u}_{\hat{i}+1}$ is not a merged node created through the merging oracle $\mt{Merge}(\omega,\mt{V})$.
	Therefore, this node must have been created through lines 11--13 of Algorithm~\ref{alg:relaxed nonseparable DD}.
	This case falls into the settings for Proposition~\ref{prop:nonseparable convex hull}. 
	Consequently, \black{the same argument as} in the proof of Proposition~\ref{prop:nonseparable convex hull} \black{yields} that $\mt{s}(\mt{u}_{\hat{i}+1}) \leq \sum_{k \in [q]: H_k^{\text{max}} < \hat{i}+1}g_k(\bar{\vc{x}}_{H_k})$.

	\smallskip
	For the second case, assume that $\mt{u}_{\hat{i}+1}$ is a merged node created through the merging oracle $\mt{Merge}(\omega,\mt{V})$.
	Therefore, before reaching line 18 of Algorithm~\ref{alg:relaxed nonseparable DD}, there must have been a node $\mt{v}$ in layer $\hat{i}+1$ created in lines 11--13 of the algorithm, with arcs connected to $\mt{u}_{\hat{i}}$ with label values $\mc{D}^{j^*_{\hat{i}}}_{\hat{i}} \lb$ and $\mc{D}^{j^*_{\hat{i}}}_{\hat{i}} \ub$, which is subsequently merged into node $\mt{u}_{\hat{i}+1}$ after executing the merging oracle $\mt{Merge}(\omega,\mt{V})$ in line 18 of the algorithm.
	\black{The same argument as in} the first case above \black{yields} that $\mt{s}(\mt{v}) \leq \sum_{k \in [q]: H_k^{\text{max}} < \hat{i}+1}g_k(\bar{\vc{x}}_{H_k})$.
	Since $\mt{v}$ is merged into $\mt{u}_{\hat{i}+1}$, it follows from the merging rule that $\mt{s}(\mt{u}_{\hat{i}+1}) \leq \mt{s}(\mt{v})$, yielding $\mt{s}(\mt{u}_{\hat{i}+1}) \leq \sum_{k \in [q]: H_k^{\text{max}} < \hat{i}+1}g_k(\bar{\vc{x}}_{H_k})$.
	
	\smallskip
	The remainder of the proof follows from arguments similar to those in the proof of Proposition~\ref{prop:nonseparable convex hull}.\\
		\qed       
\end{proof}

{
\color{black}
\begin{example} \label{ex:nonseparable2}
	Consider the MINLP set $\mc{G}$ studied in Example~\ref{ex:nonseparable}.
	Assume that a width limit $\omega = 2$ is imposed.
	Since the DD constructed via Algorithm~\ref{alg:nonseparable DD} in Figure~\ref{fig:nonseparable} does not satisfy this width limit, we employ Algorithm~\ref{alg:relaxed nonseparable DD} to apply a merging operation at layers whose size exceeds the width limit.
	The process of constructing the DD through merging nodes is shown in Figure~\ref{fig:relaxed nonseparable}.
	The first node layer contains the root node with state value $0$.
	After the nodes in the second node layer are created through lines 3--14 of the algorithm, we observe that the number of nodes in this layer exceed the width limit, thereby calling the merging oracle $\mt{Merge}(\omega, \mt{U}_2)$ in line 18.
	According to this oracle, we merge nodes with state values $-1$ and $-4$, as shown in Figure~\ref{fig:relaxed nonseparable_1}.
	The algorithm proceeds with creating the next layer which contains four nodes as shown in Figure~\ref{fig:relaxed nonseparable_2}.
	Similarly to the previous layer, the merging oracle $\mt{Merge}(\omega, \mt{U}_3)$ is called to merge nodes with state values $0$ and $-1$, as well as the nodes with state values $-4$ and $-5$ as depicted in dashed boxes in Figure~\ref{fig:relaxed nonseparable_2}.
	Since the number of nodes in this layer satisfies the width limit, the algorithm continues to the last iteration to create the arcs that are connected to the terminal node.
	Through a calculation similar to that in Example~\ref{ex:nonseparable}, we conclude that the node with state value $-1$ in the node layer $3$ cannot be connected to the terminal node as it does not satisfy the condition in line 15 of Algorithm~\ref{alg:relaxed nonseparable DD}.
	In contrast, the node with state value $-5$ is connected to the terminal node via two arcs with labels $1$ and $2$.
	The resulting DD, which we refer to as $\mt{D}^{\omega}$, is shown in Figure~\ref{fig:relaxed nonseparable_3}.
	To reduce the size of the DD, we can remove the nodes that are not connected to the terminal node together with their incoming arcs to obtain a reduced DD $\mt{\bar{D}}^{\omega}$ shown in Figure~\ref{fig:relaxed nonseparable_4}.
	It is easy to verify that $\conv(\Sol(\mt{\bar{D}}^{\omega})) = [1, 2] \times [0, 1] \times [1, 2]$.
	This set matches the convex hull of the original set $\mc{G}$, i.e., $\conv(\mc{G}) = \conv(\Sol(\mt{\bar{D}}^{\omega}))$.
	$\blacksquare$
\end{example}
}

\begin{figure}[h]
	\centering
	\begin{subfigure}[b]{0.47\linewidth} 
		\centering
		\includegraphics[scale=1,width=\textwidth]{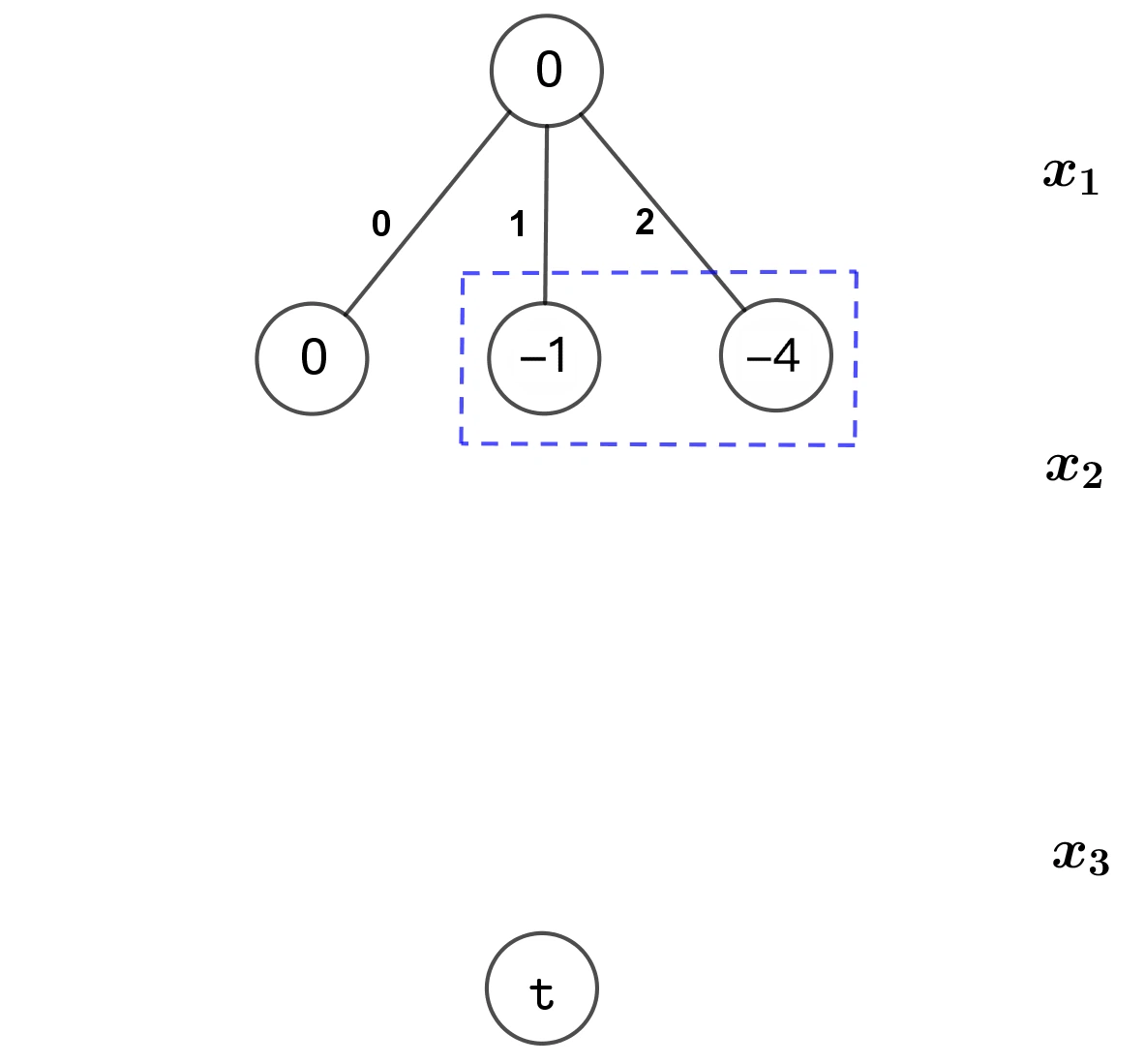}
		\caption{Merging operation at node layer 2}
		\label{fig:relaxed nonseparable_1}
	\end{subfigure}
	\hspace{0.03\linewidth}
	\begin{subfigure}[b]{0.47\linewidth} 
		\centering
		\includegraphics[scale=1,width=\textwidth]{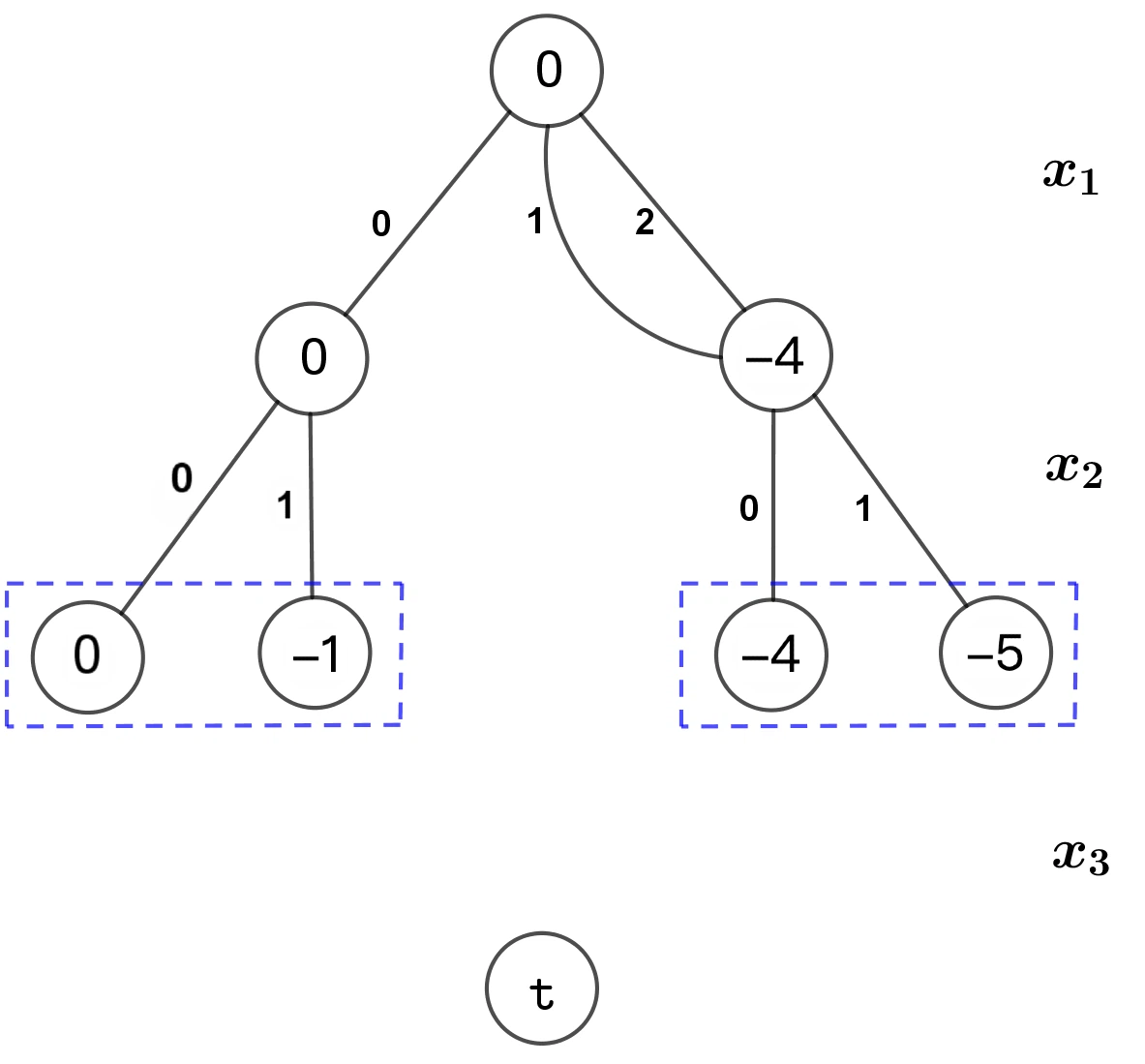}
		\caption{Merging operations at node layer 3}
		\label{fig:relaxed nonseparable_2}
	\end{subfigure}\\
	\begin{subfigure}[b]{0.47\linewidth} 
		\centering
		\includegraphics[scale=1,width=\textwidth]{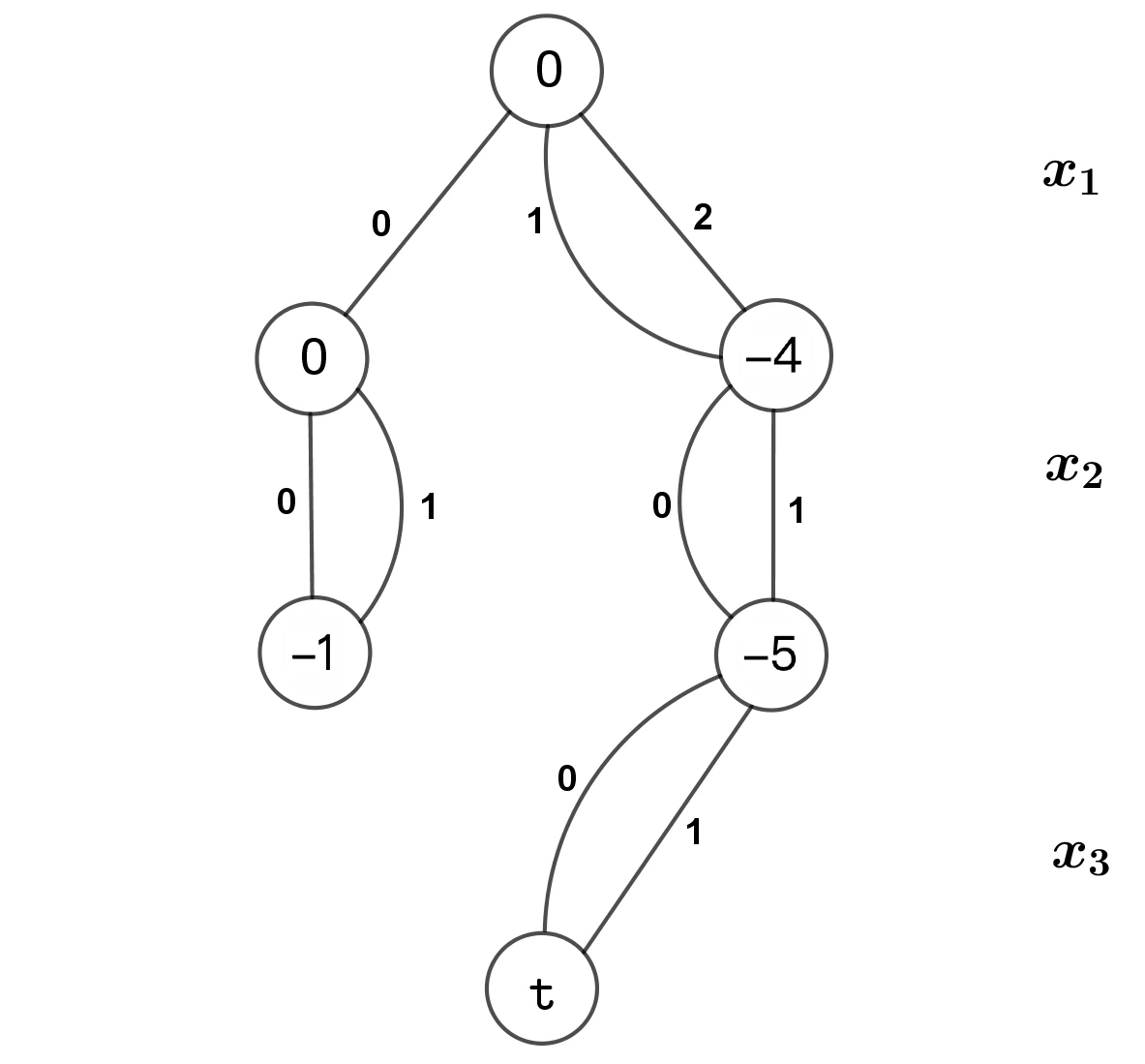}
		\caption{Relaxed DD representation}
		\label{fig:relaxed nonseparable_3}
	\end{subfigure}
	\hspace{0.03\linewidth}
	\begin{subfigure}[b]{0.47\linewidth} 
		\centering
		\includegraphics[scale=1,width=\textwidth]{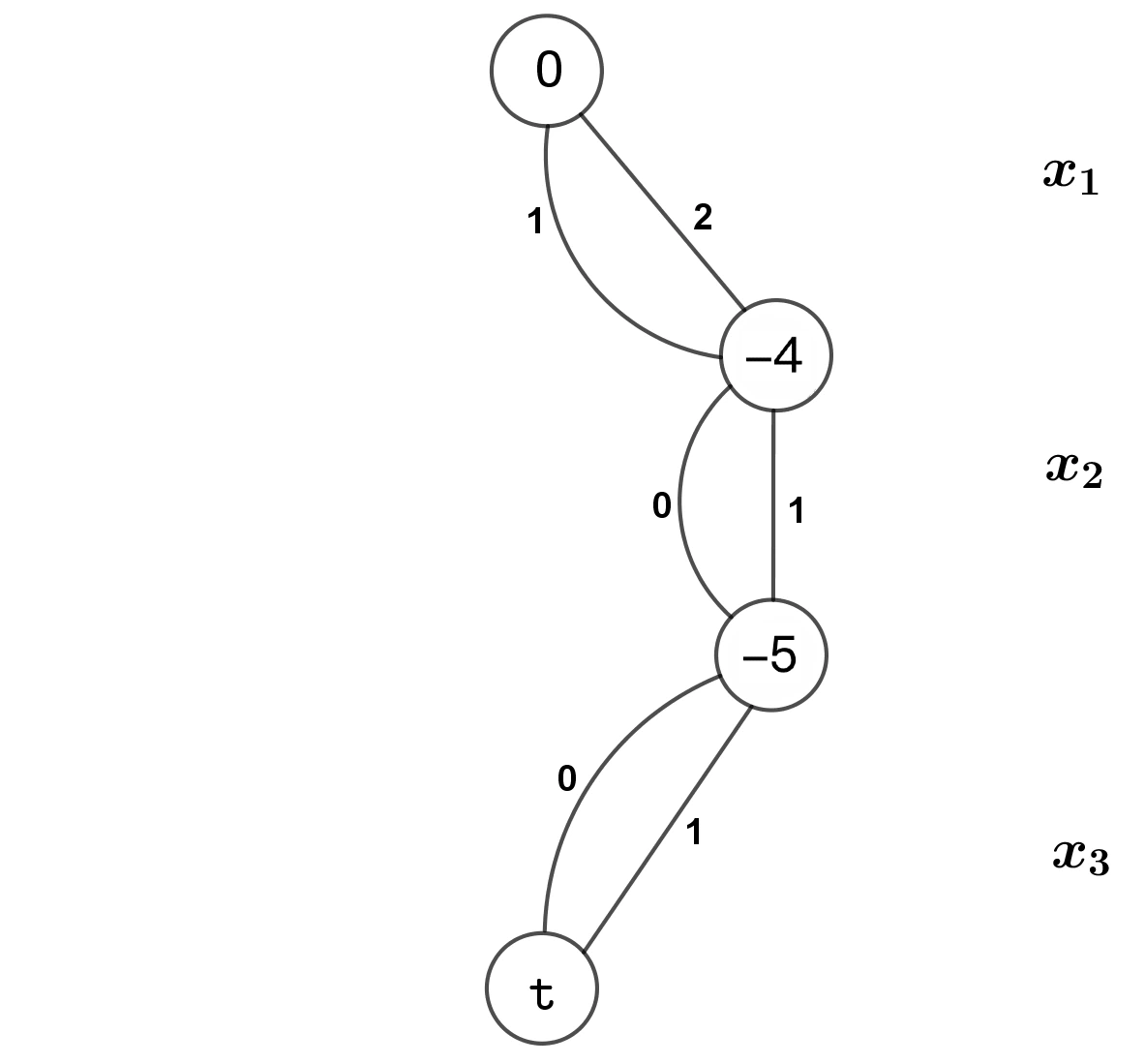}
		\caption{Reduced DD representation}
		\label{fig:relaxed nonseparable_4}
	\end{subfigure}
	\caption{Relaxed DD for the set in Example~\ref{ex:nonseparable2}.}
	\label{fig:relaxed nonseparable}
\end{figure}

Theorem~\ref{thm:nonseparable relaxed 2} implies that applying a merging operation at the layers of a DD that models non-separable functions can effectively reduce the size of the DD while still providing a relaxation for the underlying set.
This result aligns with the well-known findings on relaxed DDs for separable functions in the literature \cite{davarnia:va:2020,davarnia:2021}.
However, there is an interesting and notable difference in how the merging operation impacts the relaxations produced for the separable and non-separable cases.
In the separable case, the merging operation exhibits a \textit{sequential relaxation} property. 
This means that the operation can be decomposed into a sequence of steps, with each step providing a relaxation for the set obtained from the previous step. 
In contrast, the sequential relaxation property does not hold for the non-separable case.
Specifically, merging nodes at the layers of a DD that invoke node backtracking, as in Algorithm~\ref{alg:nonseparable DD}, while still providing a relaxation for the original set, does not necessarily provide a relaxation for the solution set of the DD itself.

\smallskip
This result may initially seem counterintuitive because merging operations are traditionally associated with weakening the relaxation, as they often lead to underestimating the state value of some nodes in the DD. 
However, when dealing with functions that include non-separable terms, the impact of merging operations extends beyond just the state values of individual nodes. 
Merging nodes at a given layer of a DD not only directly affects the state values of those nodes but also indirectly alters the structure of the DD by regrouping the arcs that can reach specific nodes in subsequent layers. 
This restructuring can significantly influence how state values are computed in later layers, potentially leading to stronger relaxations rather than weaker ones.
As a result, the DD obtained after applying merging operations may not necessarily provide a relaxation of the original DD. 
This phenomenon can be observed by comparing the DDs obtained from the non-separable cases in Examples~\ref{ex:nonseparable} and \ref{ex:nonseparable2}.
In particular, we have \black{$\conv(\mt{\bar{D}}) \nsubseteq \conv(\mt{\bar{D}}^{\omega})$}, even though \black{$\mt{\bar{D}}$} is derived by merging some nodes of \black{$\mt{\bar{D}}^{\omega}$}.
Instead, the opposite inclusion holds, i.e., \black{$\conv(\mt{\bar{D}}) \subset \conv(\mt{\bar{D}}^{\omega})$}.
This inversion occurs because the merging operation regrouped the arcs in layer 1 in a manner that strengthened the relaxation of the bilinear term $-x_1x_3$ in layer 3, leading to a tighter overall relaxation.
This example highlights the nuanced effects of merging operations in non-separable DDs and demonstrates that, contrary to what one might expect, these operations can sometimes result in stronger relaxations rather than weaker ones.

\smallskip
The foundation of our DD solution framework lies in constructing outer approximations for the MINLP by convexifying the feasible regions defined by its individual constraints. 
However, a valuable feature that can enhance both the flexibility and strength of our method is the ability to convexify the feasible region defined by an intersection of multiple constraints simultaneously.
The following remark outlines two approaches that can be employed to achieve this feature.

\begin{remark} \label{rem:intersection}	
	Consider sets $\mc{G}^k$ defined in \eqref{eq:OA_constraint} for $k \in K$.	
	The first approach for representing the intersection of multiple constraints via DDs involves the following steps: (i) Begin by constructing a separate DD $\mt{D}^k$ corresponding to each constraint $\mc{G}^k$ for $k \in K$ using Algorithms~\ref{alg:nonseparable DD} or \ref{alg:relaxed nonseparable DD};
	and (ii) intersect the resulting DDs using the well-known ``conjoining'' technique, which yields a DD $\bar{\mt{D}}$ whose feasible solutions consist of the solutions feasible to all individual DDs, i.e., $\Sol(\bar{\mt{D}}) = \bigcap_{k \in K} \Sol(\mt{D}^k)$; see \cite{bergman:ci:va:ho:2016} for a detailed exposition to the conjoining technique.
	
	The second approach involves directly constructing a DD $\tilde{\mt{D}}$ that represents the feasible region defined by the intersected constraints, i.e., $\Sol(\tilde{\mt{D}}) = \bigcap_{k \in K} \mc{G}^k$.
	The construction procedure follows a similar process to that outlined in Algorithms~\ref{alg:nonseparable DD} or \ref{alg:relaxed nonseparable DD}, with a key difference: for each node in the DD, instead of a single state value, there will be a vector of state values. 
	Each component of this vector corresponds to the state values calculated according to the algorithms for each individual constraint $\mc{G}^k$.
	In the final step of these algorithms, a node at layer $n$ is connected to the terminal node if and only if the last if-condition is satisfied for all components of the state value vector. 
\end{remark}

\subsection{Merging Policies}  \label{sub:merging}

In Algorithm~\ref{alg:relaxed nonseparable DD}, the merging oracle $\mt{Merge}(\omega, \mt{U})$ plays a critical role by providing a policy by merging subsets of nodes $\mt{U}$ in a DD layer to satisfy the prescribed DD width $\omega$.
As noted in Example~\ref{ex:nonseparable2} and its subsequent discussions, this merging policy significantly impacts the quality of the relaxation represented by a DD.
Various general-purpose and problem-specific merging policies can be developed to achieve specific properties for the relaxed DD set. 
In this section, we introduce two of the most effective and yet versatile merging oracles that can be applied to any MINLP structures.

\smallskip
The first merging oracle, denoted by $\mt{Merge}^f(.)$, is described in Algorithm~\ref{alg:merge1}.
This algorithm takes as inputs a width limit $\omega$ and a set of nodes $\mt{U}_i = \{\mt{u}_1, \mt{u}_2, \dotsc, \mt{u}_{\kappa}\}$ in a DD layer $i > 1$, where $\kappa = |\mt{U}_i|$. 
The merging policy used in this algorithm merges the $\kappa - \omega + 1$ nodes with the lowest state values to ensure that the width limit is satisfied.
This approach aims to group nodes that are more likely to be part of feasible paths in the DD due to their minimal state value contribution at this layer.
The following result shows that the time complexity of Algorithm~\ref{alg:merge1} depends on the width limit and the number of sub-domain partitions, both of which can be controlled by the user. 

\begin{proposition} \label{prop:merge1}
	Consider a node set $\mt{U}_i$ created in layer $i > 1$ of a DD $\mt{D}$ through Algorithm~\ref{alg:relaxed nonseparable DD}. Let $\omega$ be the width limit and $L_{i-1}$ be the index set of sub-domain partitions at layer $i-1$. Then, Algorithm~\ref{alg:merge1} implements the merging policy defined by $\mt{Merge}^f(\omega, \mt{U}_i)$ with a time complexity of $\mc{O}(\tau \log(\tau))$, where $\tau = \omega|L_{i-1}|$.
\end{proposition}

\begin{proof}
	The runtime complexity of Algorithm~\ref{alg:merge1} is $\mc{O}(|\mt{U}_i| \log(|\mt{U}_i|) + |\mc{A}_{i-1}|)$ since $\mc{O}(|\mt{U}_i| \log(|\mt{U}_i|)$ is the complexity of sorting elements in $\mt{U}_i$ in line 2 of the algorithm, and $|\mc{A}_{i-1}|$ is an upper bound for the number of incoming arcs that are updated in line 5 of the algorithm.
	Since the node set $\mt{U}_i$ is created through Algorithm~\ref{alg:relaxed nonseparable DD}, the number of nodes in this layer can be bounded by the prescribed width limit and the number of sub-domain partitions in that layer.
	In particular, we have $|\mt{U}_i| \leq \omega|L_{i-1}|$.
	This is because of the for-loops in line 4 of Algorithm~\ref{alg:relaxed nonseparable DD}, which imply that for each node $\mt{v} \in \mt{U}_{i-1}$ and each sub-domain partition $j \in L_{i-1}$, one new node can be created in layer $i$ as shown in line 12 of Algorithm~\ref{alg:relaxed nonseparable DD}. 
	Because the previous DD layers satisfy the width limit, we have that $|\mt{U}_{i-1}| \leq \omega$.
	Similarly, the arcs in layer $\mc{A}_{i-1}$ are created within the same for-loops in line 13 of Algorithm~\ref{alg:relaxed nonseparable DD}, yielding $|\mc{A}_{i-1}| \leq 2 \omega |L_{i-1}|$.
	Therefore, we obtain the time complexity $\mc{O}(\tau \log(\tau))$ for the algorithm.  
		\qed       
\end{proof}

\begin{algorithm}[!ht]
	\caption{Merging oracle $\mt{Merge}^f(.)$}
	\label{alg:merge1}
	\KwData{Set $\mt{U}_i$ in a DD layer $i$, a width limit $\omega$}
	\KwResult{A modified set $\mt{U}_i$ that satisfies the width limit}
	
	\If{$|\mt{U}_i| > \omega$}
	{
		{sort the node indices in $\mt{U}_i$ based on their state values, i.e., $\{\mt{u}_{j_1}, \mt{u}_{j_2}, \dotsc, \mt{u}_{j_{\kappa}}\}$ such that $\mt{s}(\mt{u}_{j_1}) \leq \mt{s}(\mt{u}_{j_2}) \leq \dotsc, \mt{s}(\mt{u}_{j_{\kappa}})$} 
		
		{create a node $\mt{v}$ in $\mt{U}_i$ and assign its state value $\mt{s}(\mt{v}) =  \min \big\{ \mt{s}(\mt{u}_{j_1}), \mt{s}(\mt{u}_{j_2}), \dotsc, \mt{s}(\mt{u}_{j_{\kappa - \omega +1}}) \big\}$}
		
		\ForAll{$k \in [\kappa - \omega + 1]$}
		{ 
			
			{disconnect the incoming arcs of $\mt{u}_{j_k}$ from it and connect them to $\mt{v}$}\\
			{delete node $\mt{u}_{j_k}$ from $\mt{U}_i$}
		}			
	}
	
\end{algorithm}

The second merging oracle, denoted by $\mt{Merge}^g(.)$, is given in Algorithm~\ref{alg:merge2}.
Similar to Algorithm~\ref{alg:merge1}, the inputs are the node sets $\mt{U}_i$ and a width limit $\omega$.
This merging oracle divides the entire range of state values of nodes in $\mt{U}_i$ into $\omega$ sub-ranges, then merges all nodes within each sub-range.
The rationale behind this approach is to group nodes with similar state values as merging candidates, thereby reducing the variation in new paths created by merging, hence yielding a tighter relaxation.
The next result shows the time complexity of Algorithm~\ref{alg:merge1}.
The proof is omitted as it follows from arguments similar to those presented in Proposition~\ref{prop:merge1}.

\begin{proposition} \label{prop:merge2}
	Consider a node set $\mt{U}_i$ created in layer $i > 1$ of a DD $\mt{D}$ through Algorithm~\ref{alg:relaxed nonseparable DD}. Let $\omega$ be the width limit and $L_{i-1}$ be the index set of sub-domain partitions at layer $i-1$. Then, Algorithm~\ref{alg:merge2} implements the merging policy defined by $\mt{Merge}^g(\omega, \mt{U}_i)$ with a time complexity of $\mc{O}(\tau \log(\tau))$, where $\tau = \omega|L_{i-1}|$.
\end{proposition}

\begin{algorithm}[!ht]
	\caption{Merging oracle $\mt{Merge}^g(.)$}
	\label{alg:merge2}
	\KwData{Set $\mt{U}_i$ in a DD layer $i$, a width limit $\omega$}
	\KwResult{A modified set $\mt{U}_i$ that satisfies the width limit}
	
	\If{$|\mt{U}_i| > \omega$}
	{
		{sort the node indices in $\mt{U}_i$ based on their state values, i.e., $\mt{U}_s = \{\mt{u}_{j_1}, \mt{u}_{j_2}, \dotsc, \mt{u}_{j_{\kappa}}\}$ such that $\mt{s}(\mt{u}_{j_1}) \leq \mt{s}(\mt{u}_{j_2}) \leq \dotsc, \mt{s}(\mt{u}_{j_{\kappa}})$} 
		
		{calculate the merging interval length $\gamma = \big(\mt{s}(\mt{u}_{j_{\kappa}}) - \mt{s}(\mt{u}_{j_{1}})\big)/\omega$}
		
		\ForAll{$k \in [\omega]$}
		{
			
			{create a node $\mt{v}_k$ in $\mt{U}_i$ and assign its state value $\mt{s}(\mt{v}) =  \min_{l \in \pi} \{ \mt{s}(\mt{u}_l) \}$, where $\pi$ is the set of indices of nodes in $\mt{U}_i$ whose state value lie within $\big[\mt{s}(\mt{u}_{j_1}) + (k-1)\gamma, \, \mt{s}(\mt{u}_{j_1}) + k\gamma \big]$ }
			
			\ForAll{$l \in \pi$}
			{ 
				
				{disconnect the incoming arcs of $\mt{u}_{l}$ from it and connect them to $\mt{v}_k$}\\
				{delete node $\mt{u}_{l}$ from $\mt{U}_i$}
			}
		}			
	}
	
\end{algorithm}

\subsection{Lower Bound Calculation Rules}  \label{sub:lower bound}

A key step in constructing DDs using Algorithms~\ref{alg:nonseparable DD} and \ref{alg:relaxed nonseparable DD} is calculating a lower bound for constraint terms, which is used for determining the state values of each node.
In this section, we discuss the methods that can be employed to calculate these bounds.

\smallskip
The first approach leverages \textit{factorable decomposition}, a widely used technique for obtaining convex relaxations of MINLPs in existing global solvers.
Consider a factorable function $g(\vc{x}):\Re^n \to \Re$, i.e., it can be decomposed into simpler terms with known convex hull representations over box domains.
The goal is to find a lower bound $\eta$ for $g(\vc{x})$ over the box domain described by $x_i \in \mc{D}_i$ for all $i \in [n]$.
Let $\mc{S}$ be the convex relaxation (possibly in a higher dimension) for the set $\big\{(\vc{x},z) \in \prod_{i=1}^n \mc{D}_i \times \Re \, \big| \, z = g(\vc{x})\big\}$ obtained by applying the factorable decomposition technique.
Since $\mc{S}$ may be defined in a higher-dimensional space, we represent its solutions as a vector $(\vc{x}, z, \vc{y}) \in \Re^{n+1+p}$, where components $\vc{y} \in \Re^p$ are auxiliary variables introduced during the factorable decomposition process.
The desired lower bound can then be computed as $\eta = \min \big\{z \big| (\vc{x}, z, \vc{y}) \in \mc{S} \big\}$.
This results in a convex program that can be solved using various existing convex optimization methods.

\smallskip
Although convex programs can be solved in polynomial time, repeatedly invoking a convex solver for a large number of nodes in DD layers can be computationally intensive. Moreover, there are several applications, such as those discussed in Section~\ref{sec:computation}, where the underlying nonlinear functions are not factorable, making them unsuitable for the factorable decomposition approach outlined above. 
Therefore, it is critical to develop an alternative method that can efficiently find lower bounds for a broader class of nonlinear functions when constructing large DDs.
Our proposed method addresses this need by leveraging the monotone properties of the underlying function, as described below.
In the following definition, given a vector $\vc{x} \in \Re^n$, we denote by $\vc{x}_{-i}$ a replica of the vector $\vc{x}$ with component $i \in [n]$ is removed.

\begin{definition} \label{def:monotone}	
	Consider a function $g(\vc{x}):\Re^n \to \Re$ and a box domain described by $x_i \in \mc{D}_i$ for all $i \in [n]$.
	For each $i \in [n]$, we denote by $g(x_i, \bar{\vc{x}}_{-i}):\Re \to \Re$ the univariate restriction of $g(\vc{x})$ in the space of $x_i$ where variables $x_j$ are fixed at $\bar{x}_j$ for all $j \in [n] \setminus \{i\}$.
	We say that $g(x_i, \bar{\vc{x}}_{-i})$ is \textit{monotonically non-decreasing} (resp. \textit{non-increasing}) over $\mc{D}_i$ if $g(\hat{\vc{x}}) \leq g(\tilde{\vc{x}})$ for any pair of points $\hat{\vc{x}}, \tilde{\vc{x}} \in \Re^n$ such that $\hat{x}_j = \tilde{x}_j = \bar{x}_j$ for $j \in [n] \setminus \{i\}$, $\hat{x}_i, \tilde{x}_i \in \mc{D}_i$, and $\hat{x}_i \leq \tilde{x}_i$ (resp. $\hat{x}_i \geq \tilde{x}_i$).
	Further, we say that $g(\vc{x})$ is \textit{monotone} over $\prod_{i=1}^n \mc{D}_i$ if its univariate restriction $g(x_i, \bar{\vc{x}}_{-i})$ is monotonically non-decreasing or non-increasing over $\mc{D}_i$ for each $i \in [n]$ and all fixed values $\bar{x}_j \in \mc{D}_j$ for $j \in [n] \setminus \{i\}$.
\end{definition}

Next, we demonstrate how the monotone property of functions can be used to efficiently find a lower bound by evaluating the function at a specific point within its domain.

\begin{proposition} \label{prop:monotone}
	Consider a monotone function $g(\vc{x}):\Re^n \to \Re$ over a box domain described by $x_i \in \mc{D}_i$ for all $i \in [n]$.
	Then, the minimum value of $g(\vc{x})$ over the above box domain can be calculated as $\eta = g(\tilde{\vc{x}})$ where $\tilde{x}_i = \mc{D}_i \lb$ if $g(x_i, \bar{\vc{x}}_{-i})$ is monotonically non-decreasing, and $\tilde{x}_i = \mc{D}_i \ub$ if $g(x_i, \bar{\vc{x}}_{-i})$ is monotonically non-increasing, for each $i \in [n]$ and all fixed values $\bar{x}_j \in \mc{D}_j$ for $j \in [n] \setminus \{i\}$.
\end{proposition}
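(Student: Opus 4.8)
The plan is to establish the stronger pointwise inequality $g(\tilde{\vc{x}}) \leq g(\vc{x})$ for every $\vc{x} \in \prod_{i=1}^n \mc{D}_i$; since $\tilde{\vc{x}}$ itself lies in the box (each $\tilde{x}_i$ equals either $\mc{D}_i \lb$ or $\mc{D}_i \ub$, both of which belong to $\mc{D}_i$), this inequality immediately certifies that $\eta = g(\tilde{\vc{x}})$ is the minimum value of $g$ over the box. The central technique is a telescoping coordinate-swap argument: starting from an arbitrary box point, I would replace its coordinates one at a time by the corresponding coordinates of $\tilde{\vc{x}}$ and show that no single replacement increases the function value, so that the cumulative effect is non-increasing.

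Concretely, I would first fix an arbitrary $\vc{x} \in \prod_{i=1}^n \mc{D}_i$ and define the sequence $\vc{x}^{(0)} = \vc{x}, \vc{x}^{(1)}, \dotsc, \vc{x}^{(n)} = \tilde{\vc{x}}$, where $\vc{x}^{(k)}$ agrees with $\vc{x}^{(k-1)}$ in every coordinate except the $k$-th, whose value is switched from $x_k$ to $\tilde{x}_k$. For each $k \in [n]$, the consecutive points $\vc{x}^{(k-1)}$ and $\vc{x}^{(k)}$ share all coordinates other than $k$; I would apply the univariate restriction $g(\cdot, \bar{\vc{x}}_{-k})$ at this common fixed profile $\bar{\vc{x}}_{-k}$ and split into the two cases of Definition~\ref{def:monotone}. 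If $g$ is monotonically non-decreasing in $x_k$, then the choice $\tilde{x}_k = \mc{D}_k \lb \leq x_k$ yields $g(\vc{x}^{(k)}) \leq g(\vc{x}^{(k-1)})$; if $g$ is monotonically non-increasing in $x_k$, then $\tilde{x}_k = \mc{D}_k \ub \geq x_k$ yields the same inequality. Chaining these gives $g(\tilde{\vc{x}}) = g(\vc{x}^{(n)}) \leq \dotsb \leq g(\vc{x}^{(0)}) = g(\vc{x})$, and because every intermediate $\vc{x}^{(k)}$ keeps each coordinate inside its own interval $\mc{D}_j$, all invocations of the monotonicity property are legitimate. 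Since $\vc{x}$ was arbitrary, the proof concludes.

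The main subtlety—more a matter of careful bookkeeping than a genuine obstacle—is that the monotonicity direction in each coordinate $k$ must be the same for every fixed profile $\bar{\vc{x}}_{-k}$, since at step $k$ the already-swapped coordinates differ from the original ones. The proposition's prescription of $\tilde{x}_k$ as a single value ($\mc{D}_k \lb$ or $\mc{D}_k \ub$) tacitly relies on this consistency, so I would state explicitly that Definition~\ref{def:monotone} assigns one fixed direction per coordinate, which guarantees that the direction invoked at step $k$ is valid regardless of which coordinates have already been replaced. Beyond this point, each inequality in the telescope is a direct application of coordinatewise monotonicity, so the argument is routine once the decomposition into single-coordinate swaps is in place.
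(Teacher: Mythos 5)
Your proposal is correct and follows essentially the same argument as the paper: both proofs replace coordinates one at a time with the prescribed bound and chain the coordinatewise monotonicity inequalities from Definition~\ref{def:monotone} into a telescoping sequence. The only difference is cosmetic—the paper wraps the chain in a proof by contradiction while you prove the pointwise inequality $g(\tilde{\vc{x}}) \leq g(\vc{x})$ directly—and your explicit remark that the monotonicity direction per coordinate must be consistent across fixed profiles is a careful touch that the paper leaves implicit.
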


\begin{proof}
	Assume by contradiction that $g(\tilde{\vc{x}})$ is not the minimum value of $g(\vc{x})$ over the box domain $\prod_{i=1}^n \mc{D}_i$.
	Then, there exists a point $\hat{\vc{x}} \in \prod_{i=1}^n \mc{D}_i$ such that $g(\hat{\vc{x}}) < g(\tilde{\vc{x}})$.
	Since $\hat{\vc{x}} \neq \tilde{\vc{x}}$, there must exist indexes $i_1, i_2, \dotsc, i_p$ for some $1 \leq p \leq n$ such that $\hat{x}_{i_j} \neq \tilde{x}_{i_j}$ for each $j \in [p]$.
	For each such $j$, construct $\hat{\vc{x}}^{j}$ to be the point where $\hat{x}_k^{j} = \hat{x}_k^{j-1}$ for $k \neq i_j$, and $\hat{x}_{i_j}^{j} = \mc{D}_{i_j} \lb$ if $g(x_{i_j}, \hat{\vc{x}}_{-{i_j}})$ is monotonically non-decreasing over $\mc{D}_{i_j}$, and $\hat{x}_{i_j}^{j} = \mc{D}_{i_j} \ub$ if $g(x_{i_j}, \hat{\vc{x}}_{-{i_j}})$ is monotonically non-increasing over $\mc{D}_{i_j}$.
	In this definition, we set $\hat{\vc{x}}^{0} = \hat{\vc{x}}$.
	Further, the description of $\tilde{\vc{x}}$ in the proposition statement implies that $\hat{\vc{x}}^{p} = \tilde{\vc{x}}$.
	It follows from Definition~\ref{def:monotone} that $g(\hat{\vc{x}}) = g(\hat{\vc{x}}^{0}) \geq g(\hat{\vc{x}}^{1}) \geq \dotsc \geq g(\hat{\vc{x}}^{p}) = g(\tilde{\vc{x}})$.
	This is a contradiction to the initial assumption that $g(\hat{\vc{x}}) < g(\tilde{\vc{x}})$.
		\qed       
\end{proof}

In practice, many nonlinear functions satisfy the monotone property as defined in Definition~\ref{def:monotone}, enabling the use of a fast method, as outlined in Proposition~\ref{prop:monotone}, to calculate lower bounds when constructing DDs for those functions. 
For example, consider a general polynomial function commonly used in MINLP models, defined as $g(\vc{x}) = \prod_{i=1}^n x_i^{\alpha_i}$ with $\alpha_i \in \Re$ for $i \in [n]$.
It is easy to verify that $g(\vc{x})$ is monotone over each orthant.
As another advantage, the monotone property facilitates the calculation of lower bounds for non-factorable functions that are not suitable for factorable decomposition, as demonstrated in the following example.

\begin{example} \label{ex:monotone}
	Consider the $\ell_p$-norm function $g(\vc{x}) = ||\vc{x}||_p = \left(\sum_{i=1}^n x_i^p \right)^{1/p}$, for $p \in (0, \infty)$.
	This function can be convexified using factorable decomposition method over a box domain $\prod_{i=1}^n \mc{D}_i$ in the positive orthant.
	Let $\mc{S}$ be the convex relaxation described by convexifying individual constraints of the following decomposed formulation of the model
	\begin{subequations}
		\begin{align*}
			&z = y_0^{1/p} &\\
			&y_0 = \sum_{i=1}^n y_i &\\
			&y_i = x_i^p &\forall i \in [n]\\
			&x_i \in \mc{D}_i &\forall i \in [n].
		\end{align*}
	\end{subequations}
	According to the previous arguments, minimizing $z$ over $\mc{S}$ provides a lower bound for $g(\vc{x})$ over its box domain.
	Alternatively, we can use Proposition~\ref{prop:monotone} to calculate this lower bound because $g(\vc{x})$ is monotone over its imposed domain.
	An advantage of the latter approach is that it can be executed directly in the space of the original variables, eliminating the need to introduce auxiliary variables and additional constraints to handle decoupled terms, as required by the factorable decomposition approach.
	
	\smallskip
	Now consider the $\ell_0$-norm function $h(\vc{x}) = ||\vc{x}||_0 = \sum_{i=1}^n \mathbb{I}(x_i)$ over the above box domain $\prod_{i=1}^n \mc{D}_i$, where $\mathbb{I}(x_i) = 0$ if $x_i = 0$, and $\mathbb{I}(x_i) = 1$ otherwise.
	This function is not factorable, making it not unsuitable for application of factorable decomposition.
	In contrast, it is easy to verify that $h(\vc{x})$ is monotone over its box domain.
	Therefore, we can use Proposition~\ref{prop:monotone} to calculate its lower bound.
	$\blacksquare$
\end{example}

Despite the wide range of functions that satisfy the monotone property, some functions do not exhibit this property due to the presence of variables in multiple positions, breaking the monotonic patterns of their univariate restrictions. 
For instance, consider the function $g(x_1, x_2) = \frac{x_2^4 \, e^{-x_2}}{\arctan(x_1) + 1}$ over the positive orthant.
It is clear that the univariate restriction of $g(x_1, x_2)$ in the space of $x_1$, i.e., $g(x_1, \bar{x}_{-1}) = \frac{\bar{x}_2^4 \, e^{-\bar{x}_2}}{\arctan(x_1) + 1}$, is monotonically non-increasing over this domain.
However, the univariate restriction of $g(x_1, x_2)$ in the space of $x_2$, i.e., $g(x_2, \bar{x}_{-2}) = \frac{x_2^4 \, e^{-x_2}}{\arctan(\bar{x}_1) + 1}$, is not monotone.
Consequently, the method of Proposition~\ref{prop:monotone} cannot be used to find a lower bound for $g(x_1, x_2)$.
To address such function structures, we introduce a technique referred to as \textit{re-indexing}, which is outlined next.

\begin{definition} \label{def:reindex}	
	Consider a function $g(\vc{x}):\Re^n \to \Re$.
	Define the \textit{re-indexed} function $g^{\text{rx}}(\vc{y})$ of $g(\vc{x})$ by substituting the variables $\vc{x}$ with variables $\vc{y}$ such that each $y_i$ variable appears only once in the function's expression.
	If variable $y_{j}$ substitutes variable $x_i$ in the re-indexed function, we denote the relation between these indices by the mapping $R(j) = i$.
\end{definition}

In the example discussed previously, the re-indexed function of $g(x_1, x_2)$ is $g^{\text{rx}}(y_1, y_2, y_3) = \frac{y_1^4 \, e^{-y_2}}{\arctan(y_3) + 1}$, where $R(1) = 2$, $R(2) = 2$, and $R(3) = 1$.
It is easy to verify that $g^{\text{rx}}(y_1, y_2, y_3)$ is monotone over the positive orthant, allowing the application of Proposition~\ref{prop:monotone} to calculate its lower bound.
The next proposition shows that this lower bound can also serve as a lower bound for the original function $g(x_1, x_2)$.

\begin{proposition} \label{prop:reindexing}
	Consider a function $g(\vc{x}):\Re^n \to \Re$ over a box domain described by $x_i \in \mc{D}_i$ for all $i \in [n]$.
	Let $g^{\text{rx}}(\vc{y}):\Re^{p} \to \Re$ be the re-indexed function of $g(\vc{x})$ with re-index mapping $R(.)$.
	Assume that $g^{\text{rx}}(\vc{y})$ is monotone over the box domain described by $y_j \in \mc{D}_{R(j)}$ for all $j \in [p]$.
	Then, a lower bound for $g(\vc{x})$ over the above box domain can be calculated as $\eta = g^{\text{rx}}(\tilde{\vc{y}})$ where $\tilde{y}_j = \mc{D}_{R(j)} \lb$ if $g^{\text{rx}}(y_j, \bar{\vc{y}}_{-j})$ is monotonically non-decreasing, and $\tilde{y}_j = \mc{D}_{R(j)} \ub$ if $g^{\text{rx}}(y_j, \bar{\vc{y}}_{-j})$ is monotonically non-increasing, for each $j \in [p]$ and all fixed values $\bar{y}_k \in \mc{D}_{R(k)}$ for $k \in [p] \setminus \{j\}$.
\end{proposition}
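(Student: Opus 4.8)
The plan is to reduce the statement to a direct application of Proposition~\ref{prop:monotone} together with the defining property of the re-indexed function. First I would observe that, since $g^{\text{rx}}(\vc{y})$ is assumed monotone over its box domain $\prod_{j=1}^p \mc{D}_{R(j)}$, Proposition~\ref{prop:monotone} applies verbatim to $g^{\text{rx}}$: it guarantees that the minimum of $g^{\text{rx}}$ over this box is attained at the point $\tilde{\vc{y}}$ described in the statement, where each coordinate $\tilde{y}_j$ is set to $\mc{D}_{R(j)}\lb$ or $\mc{D}_{R(j)}\ub$ according to the monotonicity direction of the univariate restriction $g^{\text{rx}}(y_j, \bar{\vc{y}}_{-j})$. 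This restriction is well defined precisely because each $y_j$ appears only once in $g^{\text{rx}}$ by Definition~\ref{def:reindex}. Hence $\eta = g^{\text{rx}}(\tilde{\vc{y}}) = \min\{ g^{\text{rx}}(\vc{y}) \mid \vc{y} \in \prod_{j=1}^p \mc{D}_{R(j)} \}$.

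The key step is to connect this minimum back to the original function. By Definition~\ref{def:reindex}, the variables $\vc{y}$ are obtained by assigning a distinct name to each occurrence of an $x$-variable, with the relation $R(j)=i$ recording that $y_j$ replaced an occurrence of $x_i$. Consequently, if one sets every copy equal to the value of the variable it replaced, that is $y_j = x_{R(j)}$ for all $j \in [p]$, then the expression defining $g^{\text{rx}}$ reduces identically to the expression defining $g$, so that $g^{\text{rx}}(\vc{y}) = g(\vc{x})$. I would state this diagonal identity explicitly, as it is the one place where the construction of the re-indexed function is genuinely invoked.

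With the identity in hand, I would fix an arbitrary $\vc{x} \in \prod_{i=1}^n \mc{D}_i$ and define the associated point $\vc{y}$ by $y_j = x_{R(j)}$ for all $j \in [p]$. Because $x_{R(j)} \in \mc{D}_{R(j)}$, the point $\vc{y}$ lies in the box domain $\prod_{j=1}^p \mc{D}_{R(j)}$ of $g^{\text{rx}}$, hence it is feasible for the minimization that defines $\eta$. Therefore $g(\vc{x}) = g^{\text{rx}}(\vc{y}) \geq \eta$. Since $\vc{x}$ was arbitrary, $\eta$ is a valid lower bound for $g$ over the original box domain, which establishes the claim.

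I expect the main obstacle to be expository rather than mathematical, namely making precise the assertion that evaluating $g^{\text{rx}}$ on the diagonal $y_j = x_{R(j)}$ recovers $g(\vc{x})$. Geometrically, the re-indexing enlarges the feasible region from the diagonal set $\{\vc{y} \mid y_j = y_{j'} \text{ whenever } R(j)=R(j')\}$ to the full box $\prod_{j=1}^p \mc{D}_{R(j)}$; minimizing over the larger region can only decrease the optimal value, which is exactly why the re-indexing yields a valid, though generally weaker, lower bound rather than the exact minimum of $g$. Beyond carefully citing Definition~\ref{def:reindex} to justify the diagonal identity, no further difficulty arises.
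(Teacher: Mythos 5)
Your proposal is correct and follows essentially the same route as the paper's proof: apply Proposition~\ref{prop:monotone} to $g^{\text{rx}}$ to identify $\eta$ as its minimum over the box $\prod_{j=1}^p \mc{D}_{R(j)}$, then use the diagonal identity $g^{\text{rx}}(\vc{y}) = g(\vc{x})$ for $y_j = x_{R(j)}$ to transfer the bound back to $g$. The only difference is that you bound an arbitrary $\vc{x}$ directly, whereas the paper routes the argument through a minimizer $\vc{x}^*$ of $g$; your version is marginally cleaner since it does not implicitly assume that $g$ attains its minimum over the box domain.
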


\begin{proof}
	Since $g^{\text{rx}}(\vc{y})$ is monotone over the box domain described by $y_j \in \mc{D}_{R(j)}$ for all $j \in [p]$, Proposition~\ref{prop:monotone} implies that $\eta$ is the minimum of $g^{\text{rx}}(\vc{y})$ over this box domain, i.e., $\eta = \min \big\{ g^{\text{rx}}(\vc{y}) \, \big| \,  \vc{y} \in \prod_{j=1}^p \mc{D}_{R(j)} \big\}$.
	Now consider point $\vc{x}^* \in \prod_{i=1}^n \mc{D}_i$ that achieves the minimum of $g(\vc{x})$ over the box domain $\prod_{i=1}^n \mc{D}_i$, i.e., $g(\vc{x}^*) \leq g(\vc{x})$ for all $\vc{x} \in \prod_{i=1}^n \mc{D}_i$.
	Construct the point $\vc{y}^* \in \Re^n$ such that $y^*_j = x^*_{R(j)}$ for all $j \in [p]$.
	Therefore, for each $\vc{x} \in \prod_{i=1}^n \mc{D}_i$, we can write that $g(\vc{x}) \geq g(\vc{x}^*) = g^{\text{rx}}(\vc{y}^*) \geq \eta$, where the first inequality holds because of the previous argument, the first equality follows from the definition of $g^{\text{rx}}(\vc{y})$, and the second inequality is due to the first argument in the proof.
	As a result, $\eta$ is a lower bound for $g(\vc{x})$ over the above box domain $\prod_{i=1}^n \mc{D}_i$.\\
		\qed       
\end{proof}

From a practical standpoint, the combination of exploiting the monotone property of functions as outlined in Proposition~\ref{prop:monotone}, the re-indexing technique described in Proposition~\ref{prop:reindexing}, and the DD intersection method discussed in Remark~\ref{rem:intersection} offers a unique and powerful modeling tool for constructing relaxed DDs across a wide range of MINLP structures. 
In fact, our observations suggest that these techniques can handle most structures in the MINLP library, including the most complex types that remain unsolved due to their inadmissibility by state-of-the-art global solvers; see the computational studies in Section~\ref{sec:computation}. 

\black{It is worth noting that certain problem structures are prone to weaker DD-based relaxations and, consequently, slower convergence to global optimality.
For example, re-indexing may naturally lead to weaker lower bounds as the dimension of the re-indexed space increases, since the enlarged box in the re-indexed space does not enforce the equality relations among variables corresponding to the same original variable.
Furthermore, for instances with many non-separable terms involving large subsets of variables, the resulting backtracking-based subdomain information may become relatively coarse, potentially weakening the relaxation.}

\subsection{Time Complexity of Algorithms}  \label{sub:complexity}

In this section, we analyze the time complexity of Algorithm~\ref{alg:relaxed nonseparable DD}, incorporating the merging policies outlined in Section~\ref{sub:merging} and lower bound calculation rules discussed in Section~\ref{sub:lower bound}. 
Specifically, we assume that the lower bounds on the functions are calculated using the monotone property and the re-indexing technique described in Section~\ref{sub:lower bound}.
Proposition~\ref{prop:complexity relaxed nonseparable} provides the time complexity results for Algorithm~\ref{alg:relaxed nonseparable DD} that builds a DD corresponding to constraints with non-separable functions, as developed in Section~\ref{sub:nonseparable}.

\begin{proposition} \label{prop:complexity relaxed nonseparable}
	Consider set $\mc{G} = \left\{\vc{x} \in \prod_{i=1}^{n} \mc{D}_i \, \middle| \, g(\vc{x}) \leq b \right\}$ where $g(\vc{x}) = \sum_{k=1}^q g_k(\vc{x}_{H_k})$ such that $g_k(\vc{x}_{H_k}):\Re^{|H_k|} \to \Re$ is a non-separable function that contains variables with indices in $H_k$. 
	Consider the sub-domain partitions $\mc{D}^j_i$ with $j \in L_i$ for $i \in [n]$, a width limit $\omega$, and a merging oracle $\mt{Merge}(\omega,\mt{V})$ described in Algorithm~\ref{alg:merge1} or \ref{alg:merge2}. 
	Then, Algorithm~\ref{alg:relaxed nonseparable DD} constructs a DD $\mt{D} = (\mt{U}, \mt{A}, \mt{l}(.))$ corresponding to $\mc{G}$ in time $\mc{O}\big(\sum_{i=1}^n\tau_i\log(\tau_i) + \sum_{k = 1}^q \theta_k\big)$, where $\tau_i = \omega|L_i|$, and $\theta_k = \omega|H_k| \sum_{l=H_k^{\text{min}}}^{H_k^{\text{max}}-1} |L_l|$ with $H_k^{\text{max}} = \max_{l \in H_k}\{l\}$ and $H_k^{\text{min}} = \min_{l \in H_k}\{l\}$.
\end{proposition}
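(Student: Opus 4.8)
The plan is to split the running time into the two summands appearing in the bound: the cost of creating nodes and arcs and running the merge oracle at each layer, which reproduces the separable estimate of Proposition~\ref{prop:complexity relaxed separable} and yields $\sum_{i=1}^n\tau_i\log(\tau_i)$, and the additional cost of maintaining the relative sub-domains $\mc{D}_j(\mt{v})$ needed to evaluate the non-separable lower bounds, which yields $\sum_{k=1}^q\theta_k$.

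First I would carry over the per-layer size bounds from the separable case. Because the merge in line 18 enforces the width limit, $|\mt{U}_i|\leq\omega$ at every layer, so the inner for-loop of line 4 creates at most $|\mt{U}_i||L_i|\leq\omega|L_i|=\tau_i$ nodes and at most $|\mt{A}_i|\leq 2\omega|L_i|$ arcs. Treating each monotone/re-indexed lower-bound evaluation from Section~\ref{sub:lower bound} as a unit-cost oracle call, and assuming that at layer $i$ the algorithm only processes the terms with $H_k^{\text{max}}=i$ (the rest contribute $\eta_k=0$ and are skipped after a one-time preprocessing), the work of lines 5--16 at layer $i$, excluding the relative-domain updates of line 14, is $\mc{O}(\tau_i)$. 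Since Propositions~\ref{prop:merge1} and~\ref{prop:merge2} bound the merge in line 18 by $\mc{O}(\tau_i\log(\tau_i))$, summing over all layers gives the first contribution $\mc{O}(\sum_{i=1}^n\tau_i\log(\tau_i))$.

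The crux is charging the relative sub-domain updates of line 14. For a fixed non-separable term $g_k$, its lower bound is computed only at layer $H_k^{\text{max}}$, and that evaluation needs the sub-domains $\mc{D}_j(\mt{v})$ of each index $j\in H_k$ with $j<H_k^{\text{max}}$. I would invoke the recursion of Proposition~\ref{prop:relative domain} and Corollary~\ref{cor:relative domain complexity}: maintaining one such sub-domain incrementally costs $\mc{O}(|\mt{A}_l|)$ at each arc layer $l$ it is propagated through, because the update $\mc{D}_j(\mt{v})=\min/\max$ over the incoming arcs of $\mt{v}$ aggregates to the arc count of the layer. Bounding the propagation span uniformly by $H_k^{\text{min}}$ through $H_k^{\text{max}}-1$ and using $|\mt{A}_l|\leq 2\omega|L_l|$, the maintenance of one index costs $\mc{O}(\omega\sum_{l=H_k^{\text{min}}}^{H_k^{\text{max}}-1}|L_l|)$; summing over the at most $|H_k|$ indices of the term collapses to $\mc{O}(\theta_k)$, and the evaluation of $\eta_k$ at layer $H_k^{\text{max}}$ merely reads these maintained intervals and so is absorbed into the same order. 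Summing over all $q$ terms---which may overcount the maintenance of variables shared between terms and therefore still gives a valid upper bound---produces the second contribution $\mc{O}(\sum_{k=1}^q\theta_k)$, and adding the two contributions establishes the claim.

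I expect the main obstacle to be the bookkeeping of this third step. One must argue that line 14 is executed incrementally, reusing the previous layer's intervals through Proposition~\ref{prop:relative domain} rather than recomputing each $\mc{D}_j(\mt{v})$ from scratch (which would otherwise inflate the bound), and must pin down the exact span of arc layers over which each term's variables are propagated---namely $H_k^{\text{min}}$ through $H_k^{\text{max}}-1$, since at $H_k^{\text{max}}$ the term is evaluated rather than propagated. One must also verify that the aggregated per-layer update cost is proportional to the arc count $|\mt{A}_l|$ rather than the node count, and justify the unit-cost treatment of the lower-bound oracle and the restriction to active terms, so that the final-layer evaluations are subsumed into the stated two summands rather than contributing a spurious factor of $q$.
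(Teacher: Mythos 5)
Your proposal is correct and follows essentially the same route as the paper's proof: the same decomposition into per-layer node/arc/merge costs bounded via $|\mt{U}_i|\leq\omega$ and Propositions~\ref{prop:merge1}--\ref{prop:merge2}, plus the relative sub-domain maintenance charged through Corollary~\ref{cor:relative domain complexity} with $|\mt{A}_l|\leq 2\omega|L_l|$ and the span $[H_k^{\text{min}}, H_k^{\text{max}}-1]$, summed over the $|H_k|$ indices of each term. Your explicit treatment of the unit-cost lower-bound oracle and of skipping terms with $H_k^{\text{max}}\neq i$ makes assumptions precise that the paper leaves implicit, but it does not change the argument.
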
	

\begin{proof}
	It follows from the for-loops in lines 3--5 of Algorithm~\ref{alg:relaxed nonseparable DD} that at each layer $i \in [n]$, the lower bound calculation in lines 6--10 of the algorithm, as well as the node and arc creation in lines 12--16 of the algorithm can be performed in $\mc{O}(|\mt{U}|_i |L_i|)$.
	Considering that $|\mt{U}_i|$ is bounded by the width limit $\omega$, we obtain the time complexity of $\mc{O}(\omega|L_i|)$ for the above operations.
    \black{On the other hand, Proposition~\ref{prop:merge1} and \ref{prop:merge2} imply that the merging operation in line 18 of Algorithm~\ref{alg:relaxed nonseparable DD} can be performed in $\mc{O}\big(\omega|L_i|\log(\omega|L_i|)\big) = \mc{O}(\tau_i \log(\tau_i))$ for each $i \in [n]$.} 
	Considering all layers, we obtain the total time complexity for these tasks to be $\mc{O}(\sum_{i=1}^n\tau_i\log(\tau_i))$.
	
	\smallskip
	Next, we obtain the time complexity for calculating the relative sub-domains in line 14 of the algorithm.
	Corollary~\ref{cor:relative domain complexity} implies that the sub-domain $\mc{D}_j(\mt{v})$ relative to variable $x_j$ for all nodes $\mt{v}$ in layer $i \in [n]$ can be computed in $\mc{O}(\sum_{l=j}^{i-1} |\mt{A}_l|)$.
	Using a similar argument to that given previously, we can bound the above term by $\mc{O}(\omega \sum_{l=j}^{i-1} |L_l|)$.	  
	These values are calculated for each layer $i = H_k^{\text{max}}$ for all $k \in [q]$.
	Furthermore, for the nodes $\mt{v}$ in layer $i = H_k^{\text{max}}$ for each $k \in [q]$, we need to calculate the sub-domain $\mc{D}_j(\mt{v})$ for all $j \in H_k \setminus \{i\}$.
	Therefore, for a given $k \in [q]$, the time complexity for calculating the sub-domains relative to all $x_j$ with $j \in H_k \setminus \{H_k^{\text{max}}\}$ can be bounded by $\mc{O}\big(\omega|H_k| \sum_{l=H_k^{\text{min}}}^{H_k^{\text{max}}-1} |L_l|\big) = \mc{O}(\theta_k)$.
	This yields the total time complexity for calculating the relative sub-domains in line 14 of the algorithm to be bounded by $\mc{O}\big(\sum_{k =1}^q \theta_k \big)$.\\
		\qed       
\end{proof}

\section{Outer Approximation} \label{sec:OA}

In this section, we describe the oracle $\mt{Outer\_Approx}$ in Algorithm~\ref{alg:global}.
This oracle produces a linear outer approximation for the solutions of the DD constructed by $\mt{Construct\_DD}$ to find \black{lower bounds}.
Recently, \cite{davarnia:2021,davarnia:va:2020} proposed efficient methods to obtain a convex hull description for the solution set of DDs in the original space of variables through a successive generation of cutting planes.
In this section, we present a summary of those methods, adapted for the DDs constructed in Section~\ref{sec:construction}; refer to the references above for detailed derivations.
We begin by describing the convex hull in an extended space of variables.

\begin{proposition} \label{prop:Behle}
	Consider a DD $\mt{D} = (\mt{U},\mt{A},\mt{l}(.))$ with solution set $\Sol(\mt{D}) \subseteq \Re^n$.
	Define\\
	$\mc{P} = \left\{(\vc{x};\vc{y}) \in \Re^n \times \Re^{|\mt{A}|} \middle| \eqref{eq:NM1},\eqref{eq:NM2} \right\}$ where
	\begin{subequations}
		\begin{align}
			&\sum_{\mt{a} \in \delta^+(\mt{u})} y_{\mt{a}} - \sum_{\mt{a} \in \delta^-(\mt{u})} y_{\mt{a}} = f_{\mt{u}}, & \forall \mt{u} \in \mt{U} \label{eq:NM1}\\
			&\sum_{\mt{a}\in \mt{A}_i} \mt{l}(\mt{a}) \, y_{\mt{a}} = x_i, & \forall i \in [n] \label{eq:NM2}\\
			&y_{\mt{a}} \geq 0,	& \forall \mt{u} \in \mt{U}, \label{eq:NM3}
		\end{align}
	\end{subequations}
	where $f_{\mt{r}} = -f_{\mt{t}} = 1$, $f_{\mt{u}} = 0$ for $\mt{u} \in \mt{U} \setminus \{\mt{r},\mt{t}\}$, and $\delta^+(\mt{u})$ (\textit{resp.} $\delta^-(\mt{u})$) denotes the set of outgoing (\textit{resp.} incoming) arcs at node $\mt{u}$.
	Then, $\proj_{x}\mathcal{P} = \conv(\Sol(\mt{D}))$.
		\qed
\end{proposition}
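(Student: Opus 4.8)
The plan is to prove the two inclusions separately, exploiting the fact that constraints \eqref{eq:NM1} and \eqref{eq:NM3} define a unit-flow polytope on the directed acyclic graph underlying $\mt{D}$, whose vertices are exactly the incidence vectors of $\mt{r}$-$\mt{t}$ paths.

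For the inclusion $\conv(\Sol(\mt{D})) \subseteq \proj_x \mc{P}$, I would first take an arbitrary $\mt{r}$-$\mt{t}$ path $\mt{P} = (\mt{a}_1, \dotsc, \mt{a}_n)$ encoding a solution $\vc{x}^{\mt{P}} \in \Sol(\mt{D})$ with $x^{\mt{P}}_i = \mt{l}(\mt{a}_i)$. Setting $y_{\mt{a}} = 1$ for the arcs on the path and $y_{\mt{a}} = 0$ otherwise yields a unit flow satisfying \eqref{eq:NM1} and \eqref{eq:NM3}, since exactly one unit leaves $\mt{r}$, one unit enters $\mt{t}$, and inflow equals outflow at each intermediate node; constraint \eqref{eq:NM2} then forces $x_i = \mt{l}(\mt{a}_i) = x^{\mt{P}}_i$. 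Hence every path-encoded point lies in $\proj_x \mc{P}$. Since $\mc{P}$ is a polyhedron and projection is a linear map, $\proj_x \mc{P}$ is convex, so it contains the convex hull of all such points, i.e.\ $\conv(\Sol(\mt{D}))$.

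For the reverse inclusion $\proj_x \mc{P} \subseteq \conv(\Sol(\mt{D}))$, I would take any $(\vc{x}; \vc{y}) \in \mc{P}$ and apply the flow decomposition theorem to $\vc{y}$. Because $\mt{D}$ is acyclic and carries a single unit of flow from $\mt{r}$ to $\mt{t}$, this decomposition has no cycle components and thus writes $\vc{y} = \sum_{j} \lambda_j \vc{y}^{\mt{P}_j}$ as a nonnegative combination of incidence vectors of $\mt{r}$-$\mt{t}$ paths $\mt{P}_j$, with $\sum_j \lambda_j = 1$ since the total flow value equals one. Substituting into \eqref{eq:NM2} gives, for each $i \in [n]$, $x_i = \sum_{\mt{a} \in \mt{A}_i} \mt{l}(\mt{a}) y_{\mt{a}} = \sum_j \lambda_j \sum_{\mt{a} \in \mt{A}_i} \mt{l}(\mt{a}) y^{\mt{P}_j}_{\mt{a}} = \sum_j \lambda_j x^{\mt{P}_j}_i$, so $\vc{x} = \sum_j \lambda_j \vc{x}^{\mt{P}_j}$ is a convex combination of the path-encoded points $\vc{x}^{\mt{P}_j} \in \Sol(\mt{D})$, placing $\vc{x}$ in $\conv(\Sol(\mt{D}))$.

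The main obstacle is the reverse inclusion, specifically justifying that a possibly fractional unit flow decomposes into a convex combination of $\mt{r}$-$\mt{t}$ path flows. I would make this rigorous either by invoking the classical flow decomposition theorem together with acyclicity (to rule out residual cycle flows), or equivalently by appealing to the total unimodularity of the node-arc incidence matrix in \eqref{eq:NM1}, which guarantees that the vertices of the unit-flow polytope are integral and hence are precisely the path incidence vectors; every feasible $\vc{y}$ is then a convex combination of these vertices by the Minkowski--Weyl representation of a bounded polyhedron. Boundedness of this polytope itself follows from acyclicity, since no feasible flow can exceed one unit on any arc.
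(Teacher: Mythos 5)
Your proof is correct. Note first that the paper itself supplies no proof of this proposition: it is stated with a terminating \qed and imported from the earlier DD literature (the cut-generation papers cited just before it, and ultimately the classical flow-reformulation result for decision diagrams), so there is no in-paper argument to compare against. Your argument is the standard one underlying that cited result, and both directions are sound: the forward inclusion via path incidence vectors plus convexity of the projection of a polyhedron, and the reverse inclusion via flow decomposition on an acyclic graph (or, equivalently, total unimodularity of the incidence matrix plus Minkowski--Weyl), with acyclicity correctly invoked to exclude cycle components and the unit supply forcing $\sum_j \lambda_j = 1$. Two small points are worth making explicit. First, the nonnegativity constraints \eqref{eq:NM3} must be part of the definition of $\mc{P}$ for the result to hold (the paper's set-builder notation lists only \eqref{eq:NM1}--\eqref{eq:NM2}, which is evidently a typo, as is the index ``$\forall \mt{u} \in \mt{U}$'' on \eqref{eq:NM3}); your proof implicitly and correctly uses $\vc{y} \geq 0$, since without it the reverse inclusion fails. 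Second, in the substitution step $\sum_{\mt{a} \in \mt{A}_i} \mt{l}(\mt{a})\, y^{\mt{P}_j}_{\mt{a}} = x^{\mt{P}_j}_i$ you are using the layered structure of a DD: every $\mt{r}$-$\mt{t}$ path crosses each arc layer $\mt{A}_i$ exactly once, which is what guarantees that the paths produced by the decomposition are precisely encodings of points of $\Sol(\mt{D})$. Stating that one line would make the argument fully self-contained.
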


Viewing $y_{\mt{a}}$ as the network flow variable on arc $\mt{a} \in \mt{A}$ of $\mt{D}$, the formulation \eqref{eq:NM1}--\eqref{eq:NM3} implies that the LP relaxation of the network model that routes one unit of supply from the root node to the terminal node of the DD provides a convex hull description for the solution set of $\mt{D}$ in a higher dimension.
Thus, projecting out the arc-flow variables $\vc{y}$ from this formulation would yield $\conv(\Sol(\mt{D}))$ in the original space of variables.
This result leads to a separation oracle that can be used to separate any point $\bar{\vc{x}} \in \Re^n$ from $\conv(\Sol(\mt{D}))$ through solving the cut-generating LP given in Proposition~\ref{prop:separation} below.
In this model, $\vc{\theta} \in \Re^{|\mt{U}|}$ and $\vc{\gamma} \in \Re^{n}$ are dual variables associated with constraints \eqref{eq:NM1} and \eqref{eq:NM2}, respectively.

\begin{proposition} \label{prop:separation}
	Consider a DD $\mt{D} = (\mt{U},\mt{A},\mt{l}(.))$ with solution set $\Sol(\mt{D}) \subseteq \Re^n$.	
	Consider a point $\bar{\vc{x}} \in \Re^n$, and define
	\begin{align}
		\omega^* = \max \quad & \sum_{i\in [n]}\bar{x}_i\gamma_i - \theta_{\mt{t}} \label{eq:proj-cone0} \\
		&\theta_{\mt{t}(\mt{a})}-\theta_{\mt{h}(\mt{a})} + \mt{l}(\mt{a}) \gamma_i \leq 0, &\forall i \in [n], \mt{a} \in \mt{A}_k \label{eq:proj-cone1}\\
		&\theta_{\mt{r}} = 0. \label{eq:proj-cone2}
	\end{align}	
	Then, $\bar{\vc{x}} \in \conv(\Sol(\mt{D}))$ if $\omega^* = 0$. 
	Otherwise, $\bar{\vc{x}}$ can be separated from $\conv(\Sol(\mt{D}))$ via $\sum_{i\in [n]} x_i\gamma^*_i \leq \theta^*_{\mt{t}}$ where $(\vc{\theta}^*;\vc{\gamma}^*)$ is an optimal recession ray of \eqref{eq:proj-cone0}--\eqref{eq:proj-cone2}. 
		\qed
\end{proposition}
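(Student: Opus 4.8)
The plan is to reduce the membership question for $\conv(\Sol(\mt{D}))$ to a linear feasibility problem via Proposition~\ref{prop:Behle}, and then to dualize that feasibility problem through a theorem of alternatives (Farkas' lemma). By Proposition~\ref{prop:Behle}, $\bar{\vc{x}} \in \conv(\Sol(\mt{D}))$ if and only if $\bar{\vc{x}} \in \proj_{\vc{x}} \mc{P}$, i.e., if and only if the system consisting of the flow-conservation equations \eqref{eq:NM1}, the label-aggregation equations \eqref{eq:NM2} with $\vc{x}$ fixed to $\bar{\vc{x}}$, and the nonnegativity constraints \eqref{eq:NM3} admits a solution $\vc{y}$. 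First I would write this as a pure feasibility system $\{M \vc{y} = \vc{d}, \, \vc{y} \geq 0\}$ in the arc-flow variables, where the rows of $M$ are indexed by the nodes $\mt{U}$ (flow rows) and the layers $[n]$ (label rows), and $\vc{d}$ stacks the supplies $f_{\mt{u}}$ together with the fixed coordinates $\bar{x}_i$.

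The key step is to apply Farkas' lemma to this system. The column of $M$ corresponding to an arc $\mt{a} \in \mt{A}_i$ has a $+1$ in the flow row of $\mt{t}(\mt{a})$, a $-1$ in the flow row of $\mt{h}(\mt{a})$, and $\mt{l}(\mt{a})$ in the label row $i$. Assigning dual multipliers $\theta_{\mt{u}}$ to the flow rows and $\gamma_i$ to the label rows, the infeasibility certificate condition $M^\top(\vc{\theta}; \vc{\gamma}) \leq 0$ reads exactly $\theta_{\mt{t}(\mt{a})} - \theta_{\mt{h}(\mt{a})} + \mt{l}(\mt{a}) \gamma_i \leq 0$, which is \eqref{eq:proj-cone1}, while the accompanying quantity $\vc{d}^\top(\vc{\theta}; \vc{\gamma})$ collapses to $\sum_{i} \bar{x}_i \gamma_i + \theta_{\mt{r}} - \theta_{\mt{t}}$ after substituting $f_{\mt{r}} = 1$, $f_{\mt{t}} = -1$, and $f_{\mt{u}} = 0$ otherwise. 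Since both the constraints and this quantity are invariant under adding a common constant to all node potentials $\theta_{\mt{u}}$, the normalization $\theta_{\mt{r}} = 0$ in \eqref{eq:proj-cone2} is without loss of generality and turns the quantity into the objective of \eqref{eq:proj-cone0}. Because the feasible region of \eqref{eq:proj-cone1}--\eqref{eq:proj-cone2} is a homogeneous cone containing the origin, the optimal value $\omega^*$ is either $0$ (attained at the origin, so no infeasibility certificate exists and $\bar{\vc{x}}$ is feasible) or $+\infty$ (attained along a recession ray, which is precisely such a certificate). This dichotomy is exactly the claim.

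It then remains to verify that the recession ray $(\vc{\theta}^*; \vc{\gamma}^*)$ yields the advertised separating inequality. I would establish validity directly: for any $\vc{x} \in \proj_{\vc{x}} \mc{P}$ with a flow witness $\vc{y}$, substituting \eqref{eq:NM2} into $\sum_i \gamma_i^* x_i$ gives $\sum_{\mt{a}} \mt{l}(\mt{a}) \gamma_{i(\mt{a})}^* \, y_{\mt{a}}$; bounding each term using $y_{\mt{a}} \geq 0$ and \eqref{eq:proj-cone1} rewritten as $\mt{l}(\mt{a}) \gamma_i^* \leq \theta_{\mt{h}(\mt{a})}^* - \theta_{\mt{t}(\mt{a})}^*$ yields the upper bound $\sum_{\mt{a}} (\theta_{\mt{h}(\mt{a})}^* - \theta_{\mt{t}(\mt{a})}^*) y_{\mt{a}}$, which telescopes through the flow-conservation equations \eqref{eq:NM1} to $\theta_{\mt{t}}^*$ (using $\theta_{\mt{r}}^* = 0$). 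This shows $\sum_i \gamma_i^* x_i \leq \theta_{\mt{t}}^*$ holds for every $\vc{x} \in \conv(\Sol(\mt{D}))$, while the defining property of the ray, namely $\sum_i \bar{x}_i \gamma_i^* - \theta_{\mt{t}}^* = \omega^* > 0$, certifies that $\bar{\vc{x}}$ violates it.

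I expect the main obstacle to be the sign bookkeeping in this telescoping/validity argument: keeping the orientation of the incidence coefficients consistent between the Farkas certificate and the flow-conservation rows, so that the node-potential sum collapses to exactly $\theta_{\mt{t}}^* - \theta_{\mt{r}}^*$ with no stray boundary terms. A secondary subtlety, already handled above, is justifying the cone dichotomy (value $0$ versus $+\infty$) rather than a generic finite optimum, which rests on the homogeneity of \eqref{eq:proj-cone1}--\eqref{eq:proj-cone2} and the scalability of any ray with positive objective.
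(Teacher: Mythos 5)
Your proposal is correct, and it follows essentially the intended route: the paper states this proposition without proof (deferring to \cite{davarnia:va:2020,davarnia:2021}), and the derivation there is precisely what you reconstruct — reduce membership to feasibility of the network-flow extended formulation of Proposition~\ref{prop:Behle}, then dualize via Farkas/projection-cone duality, with the homogeneity of \eqref{eq:proj-cone1}--\eqref{eq:proj-cone2} giving the $0$-versus-$+\infty$ dichotomy and the telescoping argument giving validity of the cut $\sum_{i\in[n]} \gamma^*_i x_i \leq \theta^*_{\mt{t}}$. Your sign bookkeeping (tail $+$, head $-$, normalization $\theta_{\mt{r}}=0$ justified by shift invariance) is consistent with the paper's conventions, so no gap remains.
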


The above separation oracle requires solving an LP whose size is proportional to the number of nodes and arcs of the DD, which could be computationally intensive when used repeatedly inside an outer approximation framework.
As a result, an alternative subgradient-type method is proposed to solve the same separation problem, but with a focus on detecting a violated cut faster.

\begin{algorithm}[!ht]
	\caption{A subgradient-type separation algorithm}          
	\label{alg:subgradient}			                        
	
	\KwData{A DD $\mt{D} = (\mt{U},\mt{A},\mt{l}(.))$ and a point $\bar{\vc{x}}$}
	\KwResult{A valid inequality to separate $\bar{\vc{x}}$ from $\conv(\Sol(\mt{D}))$}
	{initialize $\tau = 0$, $\vc{\gamma}^{0} \in \Re^n$, $\tau^* = 0$, $\Delta^* = 0$}\\
	\While{$\text{Terminate\_Flag} = \text{False}$}
	{
		assing weights $\mt{w}(\mt{a}) = \mt{l}(\mt{a})\gamma^{\tau}_i$ to each arc $\mt{a} \in \mt{A}_i$ of $\mt{D}$ for all $i \in [n]$\\
		find a longest $\mt{r}$-$\mt{t}$ path in the weighted DD and compute its encoding point $\vc{x}^{\tau}$\\ 
		\If{$\vc{\gamma}^{\tau} (\bar{\vc{x}} - \vc{x}^{\tau}) > \max\{0, \Delta^*\}$}
		{update $\tau^* = \tau$ and $\Delta^* = \vc{\gamma}^{\tau} (\bar{\vc{x}} - \vc{x}^{\tau})$} 
		update $\vc{\phi}^{\tau+1} = \vc{\gamma}^{\tau} + \rho_{\tau} (\bar{\vc{x}} - \vc{x}^{\tau})$ for step size $\rho_{\tau}$\\
		find the projection $\vc{\gamma}^{\tau+1}$ of $\vc{\phi}^{\tau+1}$ onto the unit sphere defined by $||\vc{\gamma}||_2 \leq 1$\\  
		set $\tau = \tau + 1$
	}
	\If{$\Delta^* > 0$}
	{return inequality $\vc{\gamma}^{\tau^*} (\vc{x} - \vc{x}^{\tau^*}) \leq 0$}
\end{algorithm}

In Algorithm~\ref{alg:subgradient}, $\textit{Terminate\_Flag}$ contains criteria to stop the loop, such as iteration number, elapsed time, objective function improvement tolerance, among others.
We summarize the recursive step of the separation method employed in this algorithm as follows.
The vector $\vc{\gamma}^{\tau} \in \Re^n$ is used in line 3 to assign weights to the arcs of the DD, which are then used to obtain the longest $\mt{r}$-$\mt{t}$ path.
The solution $\vc{x}^{\tau}$ corresponding to this longest path is subtracted from the separation point $\bar{\vc{x}}$, yielding the subgradient value for the objective function of the separation problem at the point $\vc{\gamma}^{\tau}$; see Proposition 3.5 in \cite{davarnia:va:2020}.
The subgradient direction is then updated in line 7 for a step size $\rho_{\tau}$, and subsequently projected onto the unit sphere of the variables $\vc{\gamma}$ in line 8.
It is shown in \cite{davarnia:va:2020} that for an appropriate step size, this algorithm converges to an optimal recession ray of the separation problem \eqref{eq:proj-cone0}--\eqref{eq:proj-cone2}, thereby producing the desired cutting plane in line 11.
This algorithm is derivative-free, as it computes subgradient values by solving a longest path problem over a weighted DD.
Consequently, it is highly effective in identifying violated cutting planes compared to the LP \eqref{eq:proj-cone0}--\eqref{eq:proj-cone2}, making it well-suited for implementation within the spatial branch-and-cut framework used in Algorithm~\ref{alg:global}.

\smallskip
The cutting planes obtained from the separation methods in Proposition~\ref{prop:separation} and Algorithm~\ref{alg:subgradient} can be incorporated into $\mt{Outer\_Approx}$ as follows.
In the recursive steps of Algorithm~\ref{alg:global}, the LP relaxation $LP$ at a node of the \BB tree is solved to obtain an optimal solution $\vc{x}^*$, if one exists.
For each constraint $k \in K$ in the MINLP \eqref{eq:MINLP-1}--\eqref{eq:MINLP-3}, the solution $\vc{x}^*$ is evaluated to identify any violated constraints.
For each violated constraint, the aforementioned separation methods are employed to generate a cutting plane that separates $\vc{x}^*$ from $\conv(\Sol(\mt{D}^k))$, where $\mt{D}^k$ is the DD constructed for set $\mc{G}^k$ in line 7 of Algorithm~\ref{alg:global}.
The resulting cutting plane is then added to the LP relaxation, and the process is repeated until no new cuts are introduced or a stopping criterion, such as a maximum number of iterations or gap tolerance, is met.
Subsequently, the bounds are updated, and if the current node is not pruned, a spatial branch-and-bound scheme is applied, as discussed in the following section.

\section{Spatial Branch-and-Bound} \label{sec:SBB}

In global optimization of MINLPs, a divide-and-conquer strategy, such as spatial branch-and-bound (SB\&B), is employed to achieve convergence to a global optimal solution of the problem.
The \SBB strategy reduces the domain of the variables by successively partitioning their original box domains.
These partitions are typically rectangular, dividing the variable domain into smaller hyper-rectangles as a result of branching.
For each such partition, a convex relaxation is constructed to calculate a dual bound.
As the process advances, tighter relaxations are obtained, leading to updated dual bounds, which continue to improve until they approach the global optimal value of the problem within a specified tolerance.
To establish convergence, it must be shown that the convexification method applied at each partition converges (in the Hausdorff sense) to the convex hull of the feasible region restricted to that partition; refer to \cite{belotti:le:li:ma:wa:2009,ryoo:sa:1996,tawarmalani:sahinidis:2004} for a detailed discussion on \SBB methods for MINLPs.

\smallskip
In this section, we discuss the convergence results for the \SBB procedure employed in Algorithm~\ref{alg:global}.
After solving a linear outer approximation of the MINLP at the current node of the \SBB tree and updating the bounds, the $\mt{Branch}$ oracle in line 13 of the algorithm is invoked to perform the branching operation, provided the node is not pruned. 
This operation creates two child nodes by partitioning the domain of the selected branching variable based on the branching value.
In the sequel, we show that the convex hull of the solution set of the DDs obtained from Algorithms~\ref{alg:nonseparable DD}--\ref{alg:relaxed nonseparable DD} converges to the convex hull of the solutions of the original set $\mt{G}$ as the partition volume decreases.
\black{This convergence result holds for any exhaustive rectangular branching scheme. That is, along any infinite branch of the \SBB tree, the generated boxes form a nested exhaustive sequence whose diagonal lengths tend to zero.}
\black{Without loss of generality, we establish these results for DDs of unit width. This is because any DD with a larger width can be decomposed into finitely many unit-width DDs by fixing one node sequence from the root to the terminal and retaining all arcs between consecutive nodes in that sequence. Thus, the convergence result for unit-width DDs extends directly to DDs of arbitrary finite width.}
Throughout this section, we assume that the domain partitioning performed through \SBB takes into account the integrality requirements for integer variables.
For instance, if an integer variable $x$ within the domain $[l, u]$, where $l, u \in \Z$ and $l < u$, is selected for branching at a value $w \in [l, u]$, the new domain partitions will be $[l, \lfloor w \rfloor ]$ and $[\lfloor w \rfloor + 1, u]$.

\smallskip
First, we prove that reducing the variables' domain through \SBB partitioning leads to tighter convex relaxations obtained by the proposed DD-based convexification method described in Sections~\ref{sec:construction} and \ref{sec:OA}.
Proposition~\ref{prop:spatial BB nonseparable} establishes this result for the general non-separable constraints. 
To prove this, we rely on a key property of the lower bound calculation rules used in the \SBB process, which we define next.

\begin{definition} \label{def:consistent lower bound}
	Consider a function $g(\vc{x}):\mc{D} \to \Re$, where $C \subseteq [n]$ and $I = [n] \setminus C$ represent the index sets of continuous and integer variables, respectively, and where $\mc{D} = \prod_{i=1}^n \mc{D}_i$ with $\mc{D}_i = [\mc{D}_i \lb, \mc{D}_i \ub]$ for $i \in C$ and $\mc{D}_i = [\mc{D}_i \lb, \mc{D}_i \ub] \cap \Z$ for $i \in I$.
	Consider a lower bound calculation rule that outputs a lower bound $\eta(\bar{\mc{D}})$ for $g(\vc{x})$ over a box domain $\bar{\mc{D}} \subseteq \mc{D}$.
	We say that this lower bound calculation rule is \textit{consistent with respect to} $g(\vc{x})$ \textit{over} $\mc{D}$ if $\eta(\mc{D}^1) \geq \eta(\mc{D}^2)$ for any $\mc{D}^1 \subseteq \mc{D}^2 \subseteq \mc{D}$.
\end{definition}


\begin{proposition}	\label{prop:spatial BB nonseparable}
	Consider a function $g(\vc{x}):\mc{P} \to \Re$, where $g(\vc{x}) = \sum_{j=1}^q g_j(\vc{x}_{H_j})$, with each $g_j(\vc{x}_{H_j})$ being a non-separable function that contains variables with indices in $H_j \subseteq [n]$.
	Let $C \subseteq [n]$ and $I = [n] \setminus C$ represent the index sets of continuous and integer variables, respectively.
	Consider $\mc{P} = \prod_{i=1}^n \mc{P}_i$, where $\mc{P}_i = [\mc{P}_i \lb, \mc{P}_i \ub]$ for $i \in C$ and $\mc{P}_i = [\mc{P}_i \lb, \mc{P}_i \ub] \cap \Z$ for $i \in I$.
	For each $k =1, 2$, let $\mt{D}^k$ be the DD constructed via Algorithm~\ref{alg:nonseparable DD} or \ref{alg:relaxed nonseparable DD} for a single sub-domain partition $\mc{P}^k_i$ of variable $x_i$ for each $i \in [n]$ using a lower bound calculation rule consistent with respect to $g_k(\vc{x}_{H_k})$ over $\prod_{l \in H_k}\mc{P}_l$ for each $k \in [q]$. If $\mc{P}^2 \subseteq \mc{P}^1$, then $\conv(\Sol(\mt{D}^2)) \subseteq \conv(\Sol(\mt{D}^{1}))$.	
\end{proposition}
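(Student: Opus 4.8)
The plan is to mirror the argument of Proposition~\ref{prop:spatial BB separable}, with the one genuinely new ingredient being the behavior of the relative sub-domains of Definition~\ref{def:backtrack} in the unit-width setting. First I would observe that, since only a single sub-domain partition $\mc{P}^k_i$ is used for each variable $x_i$, no merging is ever triggered, so Algorithm~\ref{alg:nonseparable DD} and Algorithm~\ref{alg:relaxed nonseparable DD} produce the same DD; moreover each node layer of $\mt{D}^k$ contains exactly one node, which I denote $\mt{u}^k_i$, and consecutive nodes $\mt{u}^k_i$ and $\mt{u}^k_{i+1}$ are joined by two arcs with labels $\mc{P}^k_i\lb$ and $\mc{P}^k_i\ub$. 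Thus it suffices to reason with Algorithm~\ref{alg:nonseparable DD}.

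The crucial step is to pin down the relative sub-domains. Because there is a single node $\mt{u}^k_i$ at each layer and every $\mt{r}$-$\mt{u}^k_i$ path traverses arcs of each earlier layer $l < i$ whose only labels are $\mc{P}^k_l\lb$ and $\mc{P}^k_l\ub$, Definition~\ref{def:backtrack} (equivalently, the recursion of Proposition~\ref{prop:relative domain}) gives $\mc{D}_l(\mt{u}^k_i) = [\mc{P}^k_l\lb, \mc{P}^k_l\ub] = \mc{P}^k_l$ for every $l < i$. Consequently, when the lower bound $\eta_k$ is computed at layer $i = H_k^{\text{max}}$ in line 6 of the algorithm, it is precisely a lower bound for $g_k(\vc{x}_{H_k})$ over the box $\prod_{l \in H_k} \mc{P}^k_l$. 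This reduces the non-separable interaction back to a genuine box-domain lower bound, to which the consistency hypothesis applies.

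From here the argument parallels the separable case. Writing $\eta^{(1)}_k$ and $\eta^{(2)}_k$ for the lower bounds produced for $\mt{D}^1$ and $\mt{D}^2$, the inclusion $\mc{P}^2 \subseteq \mc{P}^1$ gives $\prod_{l \in H_k}\mc{P}^2_l \subseteq \prod_{l \in H_k}\mc{P}^1_l$, so consistency with respect to $g_k$ over $\prod_{l \in H_k}\mc{P}_l$ yields $\eta^{(1)}_k \leq \eta^{(2)}_k$ for each $k \in [q]$. Tracking the state-value accumulation through the layers (each $g_k$ contributes exactly once, at layer $H_k^{\text{max}}$, per lines 5--9), the terminal test value satisfies $\xi^{*(1)} = \sum_{k=1}^q \eta^{(1)}_k \leq \sum_{k=1}^q \eta^{(2)}_k = \xi^{*(2)}$. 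I would then split into two cases exactly as in Proposition~\ref{prop:spatial BB separable}: if $\xi^{*(2)} > b$, the terminal test in line 14 fails for $\mt{D}^2$, so $\Sol(\mt{D}^2) = \emptyset \subseteq \conv(\Sol(\mt{D}^1))$; otherwise $\xi^{*(1)} \leq \xi^{*(2)} \leq b$, so both layer-$n$ nodes connect to their terminals, each $\Sol(\mt{D}^k)$ equals the set of $2^n$ extreme points of the box $\mc{P}^k = \prod_{i=1}^n \mc{P}^k_i$, and every extreme point of the smaller box $\mc{P}^2$ lies in $\conv(\mc{P}^1) = \conv(\Sol(\mt{D}^1))$.

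The main obstacle, and the only real departure from the separable proof, is the second step: justifying that in the unit-width construction the relative sub-domains collapse to the single partition $\mc{P}^k_l$, so that $\eta_k$ is exactly a box lower bound over $\prod_{l \in H_k}\mc{P}^k_l$. This is what lets the consistency property be invoked cleanly; once the $\eta_k$ are expressed as box lower bounds, the monotone comparison $\eta^{(1)}_k \le \eta^{(2)}_k$ and the subsequent case analysis are routine. I would take care to state the unit-width reduction explicitly, since the whole proposition is framed for single-partition DDs and the later \SBB convergence results rely on decomposing wider DDs into unions of such unit-width pieces.
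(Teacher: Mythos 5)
Your proposal is correct and follows essentially the same route as the paper's proof: reduce to Algorithm~\ref{alg:nonseparable DD} via the single-partition observation, exploit the unit-width structure to show the relative sub-domains $\mc{D}_l(\mt{u}^k_i)$ collapse to $\mc{P}^k_l$ so each $\eta_k$ is a box lower bound over $\prod_{l \in H_k}\mc{P}^k_l$, invoke consistency to compare the $\eta_k$'s, and finish with the same two-case analysis on the terminal test and the extreme-point inclusion argument. The only differences are cosmetic (you derive $\xi^{*(1)} \leq \xi^{*(2)}$ before the case split rather than inside the second case, and you cite Proposition~\ref{prop:relative domain} explicitly where the paper argues the relative sub-domain claim directly), so there is nothing substantive to reconcile.
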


\begin{proof}
	Since there is only one sub-domain partition for each variable, the DDs constructed via Algorithm~\ref{alg:nonseparable DD} and \ref{alg:relaxed nonseparable DD} are the same.
	Thus, we show the result assuming Algorithm~\ref{alg:nonseparable DD} is used.
	According to this algorithm, because $\mt{D}^2$ has a unit width, we denote by $\mt{u}_i$ the only node at each node layer $i \in [n]$ of this DD. 
	Following the top-down construction steps of Algorithm~\ref{alg:nonseparable DD}, for each $i \in [n-1]$, $\mt{u}_i$ is connected via two arcs with label values $\mc{P}_i^2 \lb$ and $\mc{P}_i^2 \ub$ to $\mt{u}_{i+1}$. 
	For layer $i=n$, we refer to the $\xi$ value computed in line 9 of this algorithm as $\xi^*$ to distinguish it from the values calculated at the previous layers.
	There are two cases for $\xi^*$.	
	
	\smallskip
	For the first case, assume that $\xi^* > b$. Then, the if-condition in line 14 of Algorithm~\ref{alg:nonseparable DD} is not satisfied. 
	Therefore, node $\mt{u}_{n}$ is not connected to the terminal node $\mt{t}$ of $\mt{D}^{2}$. 
	As a result, there is no $\mt{r}$-$\mt{t}$ path in this DD, leading to an empty solution set, i.e., $\conv(\Sol(\mt{D}^{2})) = \Sol(\mt{D}^{2}) = \emptyset$. 
	This proves the result since $\emptyset \subseteq \conv(\Sol(\mt{D}^{1}))$.
	
	\smallskip
	For the second case, assume that $\xi^* \leq b$.	 		  
	Then, the if-condition in line 14 of Algorithm~\ref{alg:nonseparable DD} is satisfied, and node $\mt{u}_{n}$ is connected to the terminal node $\mt{t}$ of $\mt{D}^{2}$ via two arcs with label values $\mc{P}_n^2 \lb$ and $\mc{P}_n^2 \ub$. 
	Therefore, the solution set of $\mt{D}^{2}$ contains $2^n$ points encoded by all the $\mt{r}$-$\mt{t}$ paths of the DD, each composed of arcs with label values $\mc{P}_i^2 \lb$ or $\mc{P}_i^2 \ub$ for $i \in [n]$. 
	It is clear that these points correspond to the extreme points of the rectangular partition $\mc{P}^{2} = \prod_{i=1}^n \mc{P}_i^{2}$. 
	Pick one of these points, denoted by $\bar{\vc{x}}$. 
	We show that $\bar{\vc{x}} \in \conv(\Sol(\mt{D}^{1}))$. 	
	It follows from lines 1--12 of Algorithm~\ref{alg:nonseparable DD} that each layer $i \in [n]$ of $\mt{D}^1$ includes a single node $\mt{v}_i$. 
	Further, each node $\mt{v}_i$ is connected to $\mt{v}_{i+1}$ via two arcs with label values $\mc{P}_i^1 \lb$ and $\mc{P}_i^1 \ub$ for $i \in [n-1]$.
	To determine whether $\mt{v}_n$ is connected to the terminal node of $\mt{D}^{1}$, we need to calculate $\xi^*$ (which we refer to as $\dot{\xi}^*$ to distinguish it from that calculated for $\mt{D}^{2}$) according to line 9 of Algorithm~\ref{alg:nonseparable DD}.
	Since all the nodes $\mt{v}_1, \dotsc, \mt{v}_n$ are connected via the arcs described above, we conclude that the sub-domain of variable $x_i$ relative to node $\mt{v}_j$ for each $i \in [n-1]$ and $j > i$ is the entire variable domain $\mc{P}_i^{1}$.  
	Using an argument similar to that in the proof of Proposition~\ref{prop:nonseparable convex hull}, we write that $\dot{\xi}^* = \sum_{k=1}^q \dot{\eta}_k$, where $\dot{\eta}_k \leq g_k(\vc{x}_{H_k})$ for all $x_j \in \mc{P}_j^{1}$ with $j \in H_k$, which is obtained from the lower bound calculation rule employed for this algorithm.	
	We can similarly calculate the value of $\xi^*$ for $\mt{D}^{2}$ as $\xi^* = \sum_{k=1}^q \eta_k \leq b$, where the inequality holds by the assumption for this case, and where $\eta_k \leq g_k(\vc{x}_{H_k})$ for all $x_j \in \mc{P}_j^{2}$ with $j \in H_k$, which is obtained from the lower bound calculation rule employed for this algorithm.	
	On the other hand, because $\mc{P}^{2} \subseteq \mc{P}^{1}$, we have that $\mc{P}_i^1 \lb \leq \mc{P}_i^2 \lb \leq \mc{P}_i^2 \ub \leq \mc{P}_i^1 \ub$ for each $i \in [n]$.
	As a result, due to consistency property of the lower bound calculation rule, we have that $\dot{\eta}_k \leq \eta_k$ for each $k \in [q]$.
	Combining the above results, we obtain that $\dot{\xi}^* = \sum_{i=1}^q \dot{\eta}_q \leq \sum_{i=1}^q \eta_q \leq b$.
	Therefore, the if-condition in line 14 of Algorithm~\ref{alg:nonseparable DD} is satisfied for $\mt{D}^{1}$, and thus $\mt{v}_n$ is connected to the terminal node of $\mt{D}^{1}$ via two arcs with label values $\mc{P}_n^1 \lb$ and $\mc{P}_n^1 \ub$.
	Consequently, $\Sol(\mt{D}^{1})$ includes all extreme points of the rectangular partition $\mc{P}^{1}$ encoded by the $\mt{r}$-$\mt{t}$ paths of this DD.
	Since $\mc{P}^{2} \subseteq \mc{P}^{1}$, the extreme point $\bar{\vc{x}}$ of $\mc{P}^{2}$ is in $\conv(\Sol(\mt{D}^{1}))$, proving the result.
		\qed 
\end{proof}


Although Proposition~\ref{prop:spatial BB nonseparable} implies that the dual bounds obtained by our proposed DD-based outer approximation framework can improve through \SBB as a result of partitioning the variables' domain, an additional property of the employed lower bound calculation rules is needed to guarantee convergence to the global optimal value of the problem, as described next.

\begin{definition} \label{def:convergent lower bound}
	Consider a function $g(\vc{x}):\mc{D} \to \Re$, where $C \subseteq [n]$ and $I = [n] \setminus C$ represent the index sets of continuous and integer variable, respectively, and where $\mc{D} = \prod_{i=1}^n \mc{D}_i$ with $\mc{D}_i = [\mc{D}_i \lb, \mc{D}_i \ub]$ for $i \in C$ and $\mc{D}_i = [\mc{D}_i \lb, \mc{D}_i \ub] \cap \Z$ for $i \in I$.
	Consider a lower bound calculation rule that outputs a lower bound $\eta(\bar{\mc{D}})$ for $g(\vc{x})$ over a box domain $\bar{\mc{D}} \subseteq \mc{D}$.
	We say that this lower bound calculation rule is \textit{convergent with respect to} $g(\vc{x})$ \textit{over} $\mc{D}$ if (i) it is consistent with respect to $g(\vc{x})$ over $\mc{D}$, and (ii) $\lim_{j \to \infty} \eta(\mc{D}^j) = g(\bar{\vc{x}})$ for any nested sequence of box domains $\{\mc{D}^j\}_{j=1}^{\infty}$ with $\mc{D}^j \subseteq \mc{D}$ that converges (in the Hausdorff sense) to a singleton set $\{\bar{x}\}$, i.e., $\{\mc{D}^j\} \searrow \{\bar{\vc{x}}\}$.  
\end{definition}	

As the next step, Proposition~\ref{prop:convergence nonseparable} gives the convergence results for constraints with general non-separable terms.

\begin{proposition}	\label{prop:convergence nonseparable}
	Consider a function $g(\vc{x}):\mc{P} \to \Re$, where $g(\vc{x}) = \sum_{j=1}^q g_j(\vc{x}_{H_j})$, with each $g_j(\vc{x}_{H_j})$ being a non-separable function that contains variables with indices in $H_j \subseteq [n]$.
	Let $C \subseteq [n]$ and $I = [n] \setminus C$ represent the index sets of continuous and integer variable, respectively.
	Consider $\mc{P} = \prod_{i=1}^n \mc{P}_i$, where $\mc{P}_i = [\mc{P}_i \lb, \mc{P}_i \ub]$ for $i \in C$ and $\mc{P}_i = [\mc{P}_i \lb, \mc{P}_i \ub] \cap \Z$ for $i \in I$.
	Define $\mathcal{F}^j = \{\vc{x} \in \mc{P}^j \, | \, g(\vc{x}) \leq b \}$ for any $j \in \N$, where $\mc{P}^j = \prod_{i=1}^n \mc{P}^j_i \subseteq \mc{P}$.
	For $j \in \N$, let $\mt{D}^j$ be the DD representing $\mathcal{F}^j$, which is constructed via Algorithm~\ref{alg:nonseparable DD} or \ref{alg:relaxed nonseparable DD} for the single sub-domain partition $\mc{P}^j_i$ for $i \in [n]$ using a lower bound calculation rule convergent with respect to $g_k(\vc{x}_{H_k})$ over $\prod_{l \in H_k}\mc{P}_l$ for each $k \in [q]$.
	Assume that $\{\mc{P}^1, \mc{P}^2, \dotsc\}$, with $\mc{P}^j \subseteq \mc{P}$, is a nested sequence of rectangular partitions of the variables domain created through the \SBB process, i.e., $\mc{P}^j \supseteq \mc{P}^{j+1}$ for each $j \in \N$.
	Let $\tilde{\vc{x}} \in \Re^n$ with $\tilde{x}_i \in \Z$ for $i \in I$ be the point in a singleton set to which the above sequence converges (in the Hausdorff sense), i.e., $\{\mc{P}^j\} \searrow \{\tilde{\vc{x}}\}$.
	Then, the following statements hold:
	\begin{itemize}
		\item[(i)] If $g(\tilde{\vc{x}}) \leq b$, then $\big\{\conv(\Sol(\mt{D}^j))\big\} \searrow \{\tilde{\vc{x}}\}$.
		\item[(ii)] If $g(\tilde{\vc{x}}) > b$, then there exists $m \in \N$ such that $\Sol(\mt{D}^j) = \emptyset$ for all $j \geq m$.
	\end{itemize}	
\end{proposition}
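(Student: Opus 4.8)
The plan is to mirror the structure of the proof of Proposition~\ref{prop:convergence separable}, replacing each separable ingredient by its non-separable counterpart established earlier in this section. Since every variable is given a single sub-domain partition, Algorithms~\ref{alg:nonseparable DD} and \ref{alg:relaxed nonseparable DD} produce identical unit-width DDs, so I would work throughout with Algorithm~\ref{alg:nonseparable DD}. In particular, each node layer $i$ of $\mt{D}^j$ contains a single node, whose outgoing arcs carry the labels $\mc{P}^j_i \lb$ and $\mc{P}^j_i \ub$, and the two statements (i) and (ii) would be proved separately, exactly as in the separable case.

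For part~(i), I would run the same squeezing argument. The lower inclusion $\mathcal{F}^j \subseteq \conv(\mathcal{F}^j) \subseteq \conv(\Sol(\mt{D}^j))$ follows directly from Proposition~\ref{prop:nonseparable convex hull}. For the upper inclusion, the unit-width structure forces every $\mt{r}$-$\mt{t}$ path of $\mt{D}^j$ to be built from arcs labeled $\mc{P}^j_i \lb$ or $\mc{P}^j_i \ub$, so $\Sol(\mt{D}^j) \subseteq \mc{P}^j$ and hence $\conv(\Sol(\mt{D}^j)) \subseteq \mc{P}^j$. Proposition~\ref{prop:spatial BB nonseparable} supplies the nestedness of $\{\conv(\Sol(\mt{D}^j))\}$, and writing $\mathcal{F}^j = \{\vc{x} \mid g(\vc{x}) \leq b\} \cap \mc{P}^j$ together with $\{\mc{P}^j\} \searrow \{\tilde{\vc{x}}\}$ and $g(\tilde{\vc{x}}) \leq b$ yields $\{\mathcal{F}^j\} \searrow \{\tilde{\vc{x}}\}$. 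Sandwiching $\conv(\Sol(\mt{D}^j))$ between $\mathcal{F}^j$ and $\mc{P}^j$, both of which converge to $\{\tilde{\vc{x}}\}$, then delivers the claim.

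For part~(ii), the goal is to show that the value $\xi^*$ computed at layer $n$ of $\mt{D}^j$ eventually exceeds $b$, so that the condition in line~14 of Algorithm~\ref{alg:nonseparable DD} fails and $\Sol(\mt{D}^j) = \emptyset$. Writing $\xi^* = \sum_{k=1}^q \eta_k$ as in the proof of Proposition~\ref{prop:spatial BB nonseparable}, the crux is that for a unit-width DD the sub-domain of each variable $x_l$ relative to the single node at layer $H_k^{\text{max}}$ is exactly the full partition interval $\mc{P}^j_l$. Consequently each $\eta_k$ is the value returned by the lower bound rule on the box $\prod_{l \in H_k} \mc{P}^j_l$, and since $\{\prod_{l \in H_k} \mc{P}^j_l\} \searrow \{\tilde{\vc{x}}_{H_k}\}$, the convergence property in Definition~\ref{def:convergent lower bound} gives $\lim_{j \to \infty} \eta_k = g_k(\tilde{\vc{x}}_{H_k})$. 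Summing over $k \in [q]$ yields $\lim_{j \to \infty} \xi^* = g(\tilde{\vc{x}}) > b$, so there exists $m$ with $\xi^* > b$ for every $j \geq m$, giving $\Sol(\mt{D}^j) = \emptyset$ as required.

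The main obstacle is the treatment of the backtracking-based lower bounds for the non-separable terms: I must argue carefully that the relative sub-domains of a unit-width DD collapse to the full single partitions, so that the convergence hypothesis—stated for box domains shrinking to a point—transfers cleanly to each $\eta_k$. A secondary point is that, unlike the separable case where integer and continuous contributions decouple and can be handled by an explicit $\epsilon/|C|$ split, here a single term $g_k$ may mix integer and continuous variables; I would therefore avoid the split entirely and instead invoke the convergence property directly on each joint term $g_k$ over $\prod_{l \in H_k}\mc{P}_l$, which already accommodates both the finite-step collapse of the integer coordinates and the limiting collapse of the continuous ones.
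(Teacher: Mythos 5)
Your proposal is correct and follows essentially the same route as the paper's proof: part (i) is the identical sandwich argument using Proposition~\ref{prop:nonseparable convex hull} and Proposition~\ref{prop:spatial BB nonseparable}, and part (ii) invokes the convergence property of the lower bound rule on each joint term $g_k$ over its box $\prod_{l \in H_k}\mc{P}^j_l$ (justified by the unit-width collapse of relative sub-domains), exactly as the paper does with its explicit $\epsilon = \bigl(\sum_k g_k(\tilde{\vc{x}}_{H_k}) - b\bigr)/q$ split. The only cosmetic difference is that you conclude $\Sol(\mt{D}^j) = \emptyset$ for all $j \geq m$ directly from the limit $\xi^* \to g(\tilde{\vc{x}}) > b$, whereas the paper establishes emptiness at $j = m$ and propagates it via the nestedness of Proposition~\ref{prop:spatial BB nonseparable}; both are valid.
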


\begin{proof}
	\begin{itemize}
		\item [(i)] Assume that $g(\tilde{\vc{x}}) \leq b$.	
		Since there is only one sub-domain partition for each variable, the DDs constructed via Algorithm~\ref{alg:nonseparable DD} and \ref{alg:relaxed nonseparable DD} are the same.
		Thus, we show the result assuming Algorithm~\ref{alg:nonseparable DD} is used.
		Consider $j \in \N$.
		Note that $\mathcal{F}^j \subseteq \conv(\mathcal{F}^j) \subseteq \conv(\Sol(\mt{D}^j))$ according to Proposition~\ref{prop:nonseparable convex hull}.
		We prove that $\Sol(\mt{D}^j) \subseteq \mc{P}^j$.
		There are two cases.
		For the first case, assume that the if-condition in line 14 of Algorithm~\ref{alg:nonseparable DD} is not satisfied.
		It implies that there are no $\mt{r}$-$\mt{t}$ paths in $\mt{D}^j$, i.e., $\Sol(\mt{D}^j) = \emptyset \subseteq \mc{P}^j$.
		For the second case, assume that the if-condition in line 14 of Algorithm~\ref{alg:nonseparable DD} is satisfied.
		Then, $\Sol(\mt{D}^j)$ contains the points encoded by all $\mt{r}$-$\mt{t}$ paths in $\mt{D}^j$ composed of arcs with label values $\mc{P}_i^j \lb$ or $\mc{P}_i^j \ub$ for each $i \in [n]$, i.e., $\Sol(\mt{D}^j) \subseteq \mc{P}^j$.
		As a result, $\conv(\Sol(\mt{D}^j)) \subseteq \mc{P}^j$.
		Because $\{\mc{P}^1, \mc{P}^2, \dotsc \}$ is a nested set sequence, it follows from Proposition~\ref{prop:spatial BB nonseparable} that the sequence $\{\conv(\Sol(\mt{D}_1)), \conv(\Sol(\mt{D}_2)), \dotsc \}$ is also nested, i.e., $\conv(\Sol(\mt{D}^j)) \supseteq \conv(\Sol(\mt{D}^{j+1}))$ for $j \in \N$.
		On the other hand, we can write $\mathcal{F}^j = \{\vc{x} \in \Re^n \, | \, g(\vc{x}) \leq b \} \cap \mc{P}^j$ by definition.
		Since $\{\mc{P}^j\} \searrow \{\tilde{\vc{x}}\}$, we obtain that $\{\mathcal{F}^j\} \searrow \{\vc{x} \in \Re^n \, | \, g(\vc{x}) \leq b \} \cap \{\tilde{\vc{x}}\} = \{\tilde{\vc{x}}\}$ since $g(\tilde{\vc{x}}) \leq b$ by assumption.
		Therefore, based on the previous arguments, we can write that $\mathcal{F}^j \subseteq \conv(\Sol(\mt{D}^j)) \subseteq \mc{P}^j$. 
		Because $\{\mathcal{F}^j\} \searrow \{\tilde{\vc{x}}\}$ and $\{\mc{P}^j\} \searrow \{\tilde{\vc{x}}\}$, we conclude that $\big\{\conv(\Sol(\mt{D}^j))\big\} \searrow \{\tilde{\vc{x}}\}$.
		
		\item[(ii)] Assume that $g(\tilde{\vc{x}}) > b$.	
		For each DD $\mt{D}^j$ for $j \in \N$, using a similar approach to that of Proposition~\ref{prop:spatial BB nonseparable}, we can calculate the value $\xi^* = \sum_{k=1}^q \eta_k$ at layer $n$ of the DD in line 9 of Algorithm~\ref{alg:nonseparable DD}, where $\eta_k \leq g_k(\vc{x}_{H_k})$ for all $x_i \in \mc{P}^j_i$ for each $i \in H_k$, which is obtained from the lower bound calculation rule employed for this algorithm.
		In this relation, we have used the fact that $\mt{D}^j$ has a unit width, thus the sub-domain of each variable $x_i$ relative to the single node at any layer of the DD is the entire domain $\mc{P}^j_i$. 	
		The assumption of this case implies that $g(\tilde{\vc{x}}) = \sum_{k=1}^q g_k(\tilde{\vc{x}}_{H_k}) > b$. 
		Define $\epsilon =  \frac{\sum_{k=1}^q g_k(\tilde{\vc{x}}_{H_k}) - b}{q} > 0$.
		For each $k \in [q]$, by definition of convergence for the lower bound calculation rule, the lower bounds $\eta_k$ of $g_k(\vc{x}_{H_k})$ computed in line 6 of Algorithm~\ref{alg:nonseparable DD} monotonically converge to $g_k(\tilde{\vc{x}}_{H_k})$ as the domain partitions $\mc{P}_i^j$ converge to $\{\tilde{x}_i\}$.
		Therefore, there exists $m_i \in \N$ such that $\eta_k > g_k(\tilde{\vc{x}}_{H_k}) - \epsilon$ computed over the domain partition $\mc{P}_i^{j}$ for all $j \geq m_i$.  
		Pick $m = \max_{i \in [n]} m_i$.
		The value of $\xi^*$ for $\mt{D}^m$ is calculated as $\xi^* = \sum_{i=1}^q \eta_k > \sum_{i =1}^q \big(g_k(\tilde{\vc{x}}_{H_k}) - \epsilon \big)= \sum_{i=1}^q g_k(\tilde{\vc{x}}_{H_k}) - q \epsilon = b$, where the inequality follows from the value of $\eta_k$ computed above, and the last equality is due to the definition of $\epsilon$ given previously.
		Since $\xi^* > b$, the if-condition in line 14 of Algorithm~\ref{alg:nonseparable DD} is not satisfied, and thus the single node $\mt{v}_n$ at layer $n$ of $\mt{D}^m$ is not connected to the terminal node of this DD, implying that $\Sol(\mt{D}^m) = \emptyset$.
		Finally, it follows from Proposition~\ref{prop:spatial BB nonseparable} that $\Sol(\mt{D}^j) \subseteq \conv(\Sol(\mt{D}^j)) \subseteq \conv(\Sol(\mt{D}^m)) = \emptyset$, for all $j > m$, proving the result. 
	\end{itemize} 
		\qed
\end{proof}

The result of Proposition~\ref{prop:convergence nonseparable} shows that the convex hull of the solution set, as represented by the DDs constructed through the proposed convexification technique, converges to the feasible region of the underlying MINLP constraint during the \SBB process.
This guarantees convergence to the global optimal value of the MINLP (if one exists), as implemented in Algorithm~\ref{alg:global}.

\smallskip
Since the convergence results above depend on the convergence properties of the lower bound calculation rules used in the DD construction method, we conclude this section by outlining the conditions required to achieve these properties. 
First, we demonstrate that a necessary condition for this property pertains to a variant of lower semicontinuity in the functions defined over the space of their continuous variables, if such variables are present.
Consider a function $g(\vc{x}):\Re^n \to \Re$, where $I \subset [n]$ and $C = [n] \setminus I$ represent the index sets of integer and continuous variables, respectively.
Following the Definition~\ref{def:monotone}, we denote by $g(\vc{x}_C, \bar{\vc{x}}_{I}):\Re^{|C|} \to \Re$ the restriction of $g(\vc{x})$ in the space of $\vc{x}_C$, where variables $x_k$ are fixed at value $\bar{x}_k$ for all $k \in I$.
Further, we say that $g(\vc{x})$ is \textit{lower semicontinuous over a domain} $\mc{D} \subseteq \Re^n$ if for any point $\bar{\vc{x}} \in \mc{D}$ and any $\epsilon > 0$, there exists $\delta > 0$ such that $g(\vc{x}) > g(\bar{\vc{x}}) - \epsilon$ for every $\vc{x} \in \mc{D}$ with $||\vc{x} - \bar{\vc{x}}||_2 < \delta$.

\begin{proposition} \label{prop:semicont}
	Consider a function $g(\vc{x}):\mc{D} \to \Re$, where $I \subset [n]$ and $C = [n] \setminus I$ represent the index sets of integer and continuous variables, respectively, and where $\mc{D} = \prod_{i=1}^n \mc{D}_i$ with $\mc{D}_i = [\mc{D}_i \lb, \mc{D}_i \ub]$ for $i \in C$ and $\mc{D}_i = [\mc{D}_i \lb, \mc{D}_i \ub] \cap \Z$ for $i \in I$.
	Consider a lower bound calculation rule that outputs a lower bound $\eta(\bar{\mc{D}})$ for $g(\vc{x})$ over a box domain $\bar{\mc{D}} \subseteq \mc{D}$.
	If this lower bound calculation rule is convergent with respect to $g(\vc{x})$ over $\mc{D}$, then $g(\vc{x}_C, \bar{\vc{x}}_{I})$ is lower semicontinuous over $\prod_{i \in C} \mc{D}_i$ for any $\bar{\vc{x}}_{I} \in \prod_{i \in I} \mc{D}_i$. 
\end{proposition}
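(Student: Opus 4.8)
The plan is to prove the contrapositive-flavored statement by contradiction, exploiting the fact that convergence property (ii) quantifies over \emph{every} nested box sequence shrinking to a singleton. Fix $\bar{\vc{x}}_I \in \prod_{i \in I} \mc{D}_i$ and abbreviate the restriction by $h(\vc{x}_C) = g(\vc{x}_C, \bar{\vc{x}}_I)$, defined on $\prod_{i \in C}\mc{D}_i$. Suppose, for contradiction, that $h$ fails to be lower semicontinuous at some point $\bar{\vc{x}}_C$. Negating the definition, there exist $\epsilon_0 > 0$ and, taking $\delta = 1/k$, a sequence of points $\vc{y}^k \in \prod_{i \in C}\mc{D}_i$ with $\vc{y}^k \to \bar{\vc{x}}_C$ and $h(\vc{y}^k) \leq h(\bar{\vc{x}}_C) - \epsilon_0$ for all $k$. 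The idea is to feed these ``bad'' points into property (ii) through a suitably chosen box sequence.

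Next I would construct a single nested box sequence that pins down the bad points while collapsing to the full point $\bar{\vc{x}} = (\bar{\vc{x}}_C, \bar{\vc{x}}_I)$. For each $m \in \N$, set $\mc{D}^m = \prod_{i \in C}\big([\bar{x}_i - 1/m,\, \bar{x}_i + 1/m] \cap \mc{D}_i\big) \times \prod_{i \in I}\{\bar{x}_i\}$. Each $\mc{D}^m$ is a valid box contained in $\mc{D}$ (degenerate in the integer directions), the sequence is nested since $\mc{D}^{m+1} \subseteq \mc{D}^m$, and $\{\mc{D}^m\} \searrow \{\bar{\vc{x}}\}$. Because $\vc{y}^k \to \bar{\vc{x}}_C$, for each $m$ there is an index $k(m)$ with $||\vc{y}^{k(m)} - \bar{\vc{x}}_C||_{\infty} < 1/m$, so that the point $(\vc{y}^{k(m)}, \bar{\vc{x}}_I)$ lies in $\mc{D}^m$. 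The contradiction then follows directly: since $\eta(\mc{D}^m)$ is by definition a lower bound for $g$ over $\mc{D}^m$ and $(\vc{y}^{k(m)}, \bar{\vc{x}}_I) \in \mc{D}^m$, we obtain $\eta(\mc{D}^m) \leq g(\vc{y}^{k(m)}, \bar{\vc{x}}_I) = h(\vc{y}^{k(m)}) \leq h(\bar{\vc{x}}_C) - \epsilon_0 = g(\bar{\vc{x}}) - \epsilon_0$ for every $m$. Yet applying the convergence of the rule to $\{\mc{D}^m\} \searrow \{\bar{\vc{x}}\}$ forces $\lim_{m \to \infty} \eta(\mc{D}^m) = g(\bar{\vc{x}})$, which is incompatible with the uniform bound $\eta(\mc{D}^m) \leq g(\bar{\vc{x}}) - \epsilon_0$. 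This contradiction establishes lower semicontinuity of $g(\cdot\,, \bar{\vc{x}}_I)$.

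The step requiring the most care is the construction of one nested box sequence that simultaneously shrinks to the singleton $\{\bar{\vc{x}}\}$, collapsing to a single integer in each integer direction, and captures a lower-semicontinuity-witnessing point inside every box. The resolution is that the witnessing points $\vc{y}^k$ accumulate at $\bar{\vc{x}}_C$, so the standard shrinking cubes centered at $\bar{\vc{x}}$ automatically contain them once the cube is small enough; it is worth emphasizing that only property (ii) of convergence is invoked, while the consistency property (i) plays no role in this necessary-condition direction. A minor point to verify is that the boxes remain valid subsets of $\mc{D}$ even when $\bar{\vc{x}}_C$ lies on the boundary of $\prod_{i \in C}\mc{D}_i$, which holds because each continuous interval is intersected with $\mc{D}_i$, and that the degenerate integer coordinates are admissible since $\bar{x}_i \in \mc{D}_i$ for $i \in I$.
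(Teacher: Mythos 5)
Your proposal is correct and follows essentially the same argument as the paper's proof: both proceed by contradiction, build a nested sequence of boxes shrinking to the singleton $\{(\tilde{\vc{x}}_C,\bar{\vc{x}}_I)\}$ with the integer coordinates fixed as degenerate intervals, place the lower-semicontinuity-violating points inside those boxes so that $\eta(\mc{D}^m) \leq g(\tilde{\vc{x}}_C,\bar{\vc{x}}_I) - \epsilon$ uniformly, and contradict condition (ii) of Definition~\ref{def:convergent lower bound}. The only cosmetic difference is that the paper selects a violating point $\hat{\vc{x}}_C^{\delta^j}$ directly for each radius $\delta^j = 1/j$, whereas you first extract a convergent sequence of violating points and then index into it, which is equivalent.
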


\begin{proof}
	Assume by contradiction that there exists $\bar{\vc{x}}_{I} \in \prod_{i \in I} \mc{D}_i$ such that $g(\vc{x}_C, \bar{\vc{x}}_{I})$ is not lower semicontinuous over $\prod_{i \in C} \mc{D}_i$.
	Therefore, there exist $\tilde{\vc{x}}_C \in \prod_{i \in C} \mc{D}_i$ and $\epsilon > 0$ such that, for any $\delta > 0$, there is a point $\hat{\vc{x}}_C^{\delta} \in \prod_{i \in C} \mc{D}_i$ with $g(\hat{\vc{x}}_C^{\delta}, \bar{\vc{x}}_{I}) \leq g(\tilde{\vc{x}}_C, \bar{\vc{x}}_{I}) - \epsilon$ and $||\hat{\vc{x}}_C^{\delta} - \tilde{\vc{x}}_C||_2 < \delta$. 
	Consider a sequence $\{\delta^j\}$ with $\delta^j = 1/j$ for $j \in \N$.
	Define a sequence of box domains $\{\mc{D}^j\}$ with $\mc{D}^j = \prod_{i=1}^n \mc{D}^j_i$ where $\mc{D}^j_i = [\bar{x}_i, \bar{x}_i]$ for each $i \in I$ and $\mc{D}^j_i = [\tilde{x}_i - \delta^j, \tilde{x}_i + \delta^j] \cap [\mc{D}_i \lb, \mc{D}_i \ub]$ for each $i \in C$.
	It is clear that $\{\mc{D}^j\}$ converges to $\{(\tilde{\vc{x}}_C, \bar{\vc{x}}_{I})\}$.
	Furthermore, it follows from the definition of $\hat{\vc{x}}_C^{\delta}$ that $(\hat{\vc{x}}_C^{\delta^j}, \bar{\vc{x}}_{I}) \in \mc{D}^j$ for each $j \in \N$.
	As a result, the lower bound $\eta(\mc{D}^j)$ obtained by the lower bound calculation rule satisfies $\eta(\mc{D}^j) \leq g(\hat{\vc{x}}_C^{\delta^j}, \bar{\vc{x}}_{I}) \leq g(\tilde{\vc{x}}_C, \bar{\vc{x}}_{I}) - \epsilon < g(\tilde{\vc{x}}_C, \bar{\vc{x}}_{I}) - \frac{\epsilon}{2}$, where the second inequality follows from the contradiction assumption, and the last inequality holds because $\epsilon > 0$ by assumption.
	This is a contradiction to the assumption that the considered lower bound calculation rule is convergent with respect to $g(\vc{x})$ over $\mc{D}$ as for the domain sequence $\{\mc{D}^j\}$, we must have $\lim_{j \to \infty} \eta(\mc{D}^j) \neq g(\tilde{\vc{x}}_C, \bar{\vc{x}}_{I})$.
		\qed
\end{proof}

Next, we show that the lower bound calculation rules introduced in Section~~\ref{sub:lower bound} possess the convergence property when applied to functions that satisfy the necessary condition outlined in Proposition~\ref{prop:semicont}.
In other words, as long as this functional property for the MINLP is fulfilled, our proposed lower bound calculation rules guarantee convergence to a global solution.
Considering that the lower semicontinuity of Proposition~\ref{prop:semicont} holds for a broad range of functions commonly used in MINLP models, including test instances in the MINLP library, our proposed framework provides a powerful tool for globally solving various families of MINLPs.

\begin{proposition} \label{prop:monotone convergent}
	Consider a monotone function $g(\vc{x}):\mc{D} \to \Re$, where $I \subseteq [n]$ and $C = [n] \setminus I$ represent the index sets of integer and continuous variables, respectively, and where $\mc{D} = \prod_{i=1}^n \mc{D}_i$ with $\mc{D}_i = [\mc{D}_i \lb, \mc{D}_i \ub]$ for $i \in C$ and $\mc{D}_i = [\mc{D}_i \lb, \mc{D}_i \ub] \cap \Z$ for $i \in I$.
	Assume that $g(\vc{x}_C, \bar{\vc{x}}_{I})$ is lower semicontinuous over $\prod_{i \in C} \mc{D}_i$ for any $\bar{\vc{x}}_{I} \in \prod_{i \in I} \mc{D}_i$.
	Then, the lower bound calculation rule described in Proposition~\ref{prop:monotone} is convergent with respect to $g(\vc{x})$ over $\mc{D}$.
\end{proposition}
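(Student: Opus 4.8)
The plan is to begin by reinterpreting the lower bound rule of Proposition~\ref{prop:monotone}: for a monotone $g$, evaluating $g$ at the prescribed corner $\tilde{\vc{x}}$ of a box domain $\bar{\mc{D}} \subseteq \mc{D}$ returns the \emph{exact} minimum, so the rule satisfies $\eta(\bar{\mc{D}}) = \min_{\vc{x} \in \bar{\mc{D}}} g(\vc{x})$. With this observation, the consistency requirement (i) of Definition~\ref{def:convergent lower bound} is immediate: if $\mc{D}^1 \subseteq \mc{D}^2 \subseteq \mc{D}$, then minimizing over the smaller set yields a value at least as large, i.e. $\eta(\mc{D}^1) = \min_{\mc{D}^1} g \geq \min_{\mc{D}^2} g = \eta(\mc{D}^2)$, which is exactly the consistency condition of Definition~\ref{def:consistent lower bound}. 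It then remains only to establish the limiting requirement (ii).

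For requirement (ii), I would fix a nested sequence of box domains $\{\mc{D}^j\}$ with $\mc{D}^j \subseteq \mc{D}$ and $\{\mc{D}^j\} \searrow \{\bar{\vc{x}}\}$, and prove $\lim_{j \to \infty} \eta(\mc{D}^j) = g(\bar{\vc{x}})$ by a two-sided squeeze. Because the $\mc{D}^j$ are nested closed boxes whose Hausdorff limit is $\{\bar{\vc{x}}\}$, their intersection is $\{\bar{\vc{x}}\}$, so $\bar{\vc{x}} \in \mc{D}^j$ for every $j$; since $\eta(\mc{D}^j)$ is the minimum of $g$ over $\mc{D}^j$, this gives the upper estimate $\eta(\mc{D}^j) \leq g(\bar{\vc{x}})$ for all $j$, and hence $\limsup_{j} \eta(\mc{D}^j) \leq g(\bar{\vc{x}})$.

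For the matching lower estimate I would invoke the lower semicontinuity hypothesis. Let $\tilde{\vc{x}}^j \in \mc{D}^j$ be the corner at which the rule attains $\eta(\mc{D}^j) = g(\tilde{\vc{x}}^j)$. Since the diameters of the nested boxes tend to zero, $\tilde{\vc{x}}^j \to \bar{\vc{x}}$. The integer coordinates are the crux: for each $i \in I$ the partitions $\mc{D}^j_i$ contain only integers and shrink to the integer $\bar{x}_i$, so there is an index beyond which $\mc{D}^j_i = \{\bar{x}_i\}$ and thus $\tilde{\vc{x}}^j_I = \bar{\vc{x}}_I$. Consequently, for all sufficiently large $j$ one has $g(\tilde{\vc{x}}^j) = g(\tilde{\vc{x}}^j_C, \bar{\vc{x}}_I)$ with $\tilde{\vc{x}}^j_C \to \bar{\vc{x}}_C$. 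Applying the assumed lower semicontinuity of $g(\vc{x}_C, \bar{\vc{x}}_I)$ at $\bar{\vc{x}}_C$, for any $\epsilon > 0$ there is $\delta > 0$ with $g(\vc{x}_C, \bar{\vc{x}}_I) > g(\bar{\vc{x}}) - \epsilon$ whenever $||\vc{x}_C - \bar{\vc{x}}_C||_2 < \delta$; taking $j$ large enough that both $\tilde{\vc{x}}^j_I = \bar{\vc{x}}_I$ and $||\tilde{\vc{x}}^j_C - \bar{\vc{x}}_C||_2 < \delta$ yields $\eta(\mc{D}^j) = g(\tilde{\vc{x}}^j) > g(\bar{\vc{x}}) - \epsilon$. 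Hence $\liminf_j \eta(\mc{D}^j) \geq g(\bar{\vc{x}})$, and combining with the upper estimate gives the claimed limit.

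I expect the main obstacle to be the careful treatment of the mixed-integer domain rather than any analytic difficulty: one must argue that the integer coordinates of the minimizing corner freeze at $\bar{\vc{x}}_I$ after finitely many refinements, so that the convergence is driven entirely by the continuous block and the \emph{continuous}-variable lower semicontinuity hypothesis of Proposition~\ref{prop:semicont} is precisely what is needed (and sufficient). A secondary point to handle cleanly is confirming that $\bar{\vc{x}}$ itself lies in every $\mc{D}^j$, which underlies the upper estimate and follows from the nestedness and closedness of the boxes.
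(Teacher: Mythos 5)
Your proposal is correct and follows essentially the same route as the paper's proof: identify the rule's output as the exact minimum over the box (making consistency immediate), observe that the integer coordinates freeze at $\bar{\vc{x}}_I$ after finitely many refinements, and then invoke lower semicontinuity in the continuous block to force the minima up to $g(\bar{\vc{x}})$. The only cosmetic difference is that you run an explicit $\limsup$/$\liminf$ squeeze (making the upper estimate $\eta(\mc{D}^j) \leq g(\bar{\vc{x}})$ explicit via $\bar{\vc{x}} \in \mc{D}^j$, a step the paper leaves implicit) and apply lower semicontinuity at the minimizing corners rather than uniformly over the shrinking boxes, which are equivalent formulations of the same argument.
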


\begin{proof}
	Let $\eta(\bar{\mc{D}})$ be the lower bound of $g(\vc{x})$ over $\bar{\mc{D}} \subseteq \mc{D}$ calculated by the considered lower bound calculation rule.
	It follows from the definition of Proposition~\ref{prop:monotone} that $\eta(\bar{\mc{D}}) = \min_{\vc{x} \in \bar{\mc{D}}} \{g(\vc{x})\}$.
	This definition implies that $\eta(\mc{D}^1) \leq \eta(\mc{D}^2)$ for any $\mc{D}^2 \subseteq \mc{D}^1 \subseteq \mc{D}$, proving condition (i) of convergence property.
	To prove condition (ii), consider a nested domain sequence $\{\mc{D}^j\}$ with $\mc{D}^j = \prod_{i=1}^n \mc{D}^j_1 \subseteq \mc{D}$ for $j \in \N$ such that $\{\mc{D}^j\} \searrow \{\tilde{\vc{x}}\}$ for some $\tilde{\vc{x}} \in \mc{D}$.
	On the one hand, since $\tilde{\vc{x}} \in \mc{D}$, we must have $\tilde{x}_i \in \Z$ for $i \in I$.
	Further, for each $i \in I$, we have $\mc{D}^j_i \subseteq \Z$ for all $j \in \N$ by definition.
	Thus, there exist $\bar{m} \in \N$ such that $\mc{D}^j_i = [\tilde{x}_i, \tilde{x}_i]$ for each $i \in I$ and $j \geq \bar{m}$.
	On the other hand, it follows from the lower semicontinuity of $g(\vc{x}_C, \tilde{\vc{x}}_{I})$ over $\prod_{i \in C} \mc{D}_i$ that, for any $\epsilon > 0$, there exists $\hat{m} \in \N$ such that $g(\vc{x}_C, \tilde{\vc{x}}_{I}) > g(\tilde{\vc{x}}_C, \tilde{\vc{x}}_{I}) - \epsilon$ for each $\vc{x}_C \in \prod_{i \in C} \mc{D}^j_i$ for all $j \geq \hat{m}$.
	Define $m = \max\{\bar{m}, \hat{m}\}$.
	For all $\vc{x} \in \mc{D}^j$ with $j \geq m$, we can write $g(\vc{x}) = g(\vc{x}_C, \tilde{\vc{x}}_{I}) > g(\tilde{\vc{x}}_C, \tilde{\vc{x}}_{I}) - \epsilon = g(\tilde{\vc{x}}) - \epsilon$, where the first equality follows from the fact that $x_i = \tilde{x}_i$ for $i \in I$, the inequality is due to the relation obtained previously, and the last equality holds because $(\tilde{\vc{x}}_C, \tilde{\vc{x}}_{I}) = \tilde{\vc{x}}$ by definition.
	As a result, $\eta(\mc{D}^j) = \min_{\vc{x} \in \mc{D}^j} \{g(\vc{x})\} > g(\tilde{\vc{x}}) - \epsilon$ for all $j \geq m$.
	Since this result holds for any $\epsilon > 0$, we conclude that $\lim_{j \to \infty} \eta(\mc{D}^j) = g(\tilde{\vc{x}})$, proving the result.
		\qed	
\end{proof}

\begin{proposition} \label{prop:reindex convergent}
	Consider a function $g(\vc{x}):\mc{D} \to \Re$, where $I \subseteq [n]$ and $C = [n] \setminus I$ represent the index sets of integer and continuous variables, respectively, and where $\mc{D} = \prod_{i=1}^n \mc{D}_i$ with $\mc{D}_i = [\mc{D}_i \lb, \mc{D}_i \ub]$ for $i \in C$ and $\mc{D}_i = [\mc{D}_i \lb, \mc{D}_i \ub] \cap \Z$ for $i \in I$.
	Let $g^{\text{rx}}(\vc{y}):\Re^{p} \to \Re$ be the re-indexed function of $g(\vc{x})$ with re-index mapping $R(.)$.
	Assume that $g^{\text{rx}}(\vc{y})$ is monotone over the box domain described by $\mc{P} = \prod_{j=1}^p \mc{P}_j$ where $\mc{P}_j = \mc{D}_{R(j)}$ for $j \in [p]$.
	Define the index set of continuous variables in $g^{\text{rx}}(\vc{y})$ as $\dot{C} = \big\{ j \in [p] \big| R(j) \in C\big\}$, and define the index set of integer variables in $g^{\text{rx}}(\vc{y})$ as $\dot{I} = [p] \setminus \dot{C}$.
	Assume that $g^{\text{rx}}(\vc{y}_{\dot{C}}, \bar{\vc{y}}_{\dot{I}})$ is lower semicontinuous over $\prod_{j \in \dot{C}} \mc{P}_j$ for any $\bar{\vc{y}}_{\dot{I}} \in \prod_{j \in \dot{I}} \mc{P}_j$.	
	Then, the lower bound calculation rule described in Proposition~\ref{prop:reindexing} is convergent with respect to $g(\vc{x})$ over $\mc{D}$.
\end{proposition}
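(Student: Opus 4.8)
The plan is to reduce the claim to Proposition~\ref{prop:monotone convergent} applied to the re-indexed function $g^{\text{rx}}$. The first observation I would record is that, by Proposition~\ref{prop:monotone} applied to $g^{\text{rx}}$, the lower bound rule of Proposition~\ref{prop:reindexing} evaluated over any box domain $\bar{\mc{D}} \subseteq \mc{D}$ coincides with the \emph{monotone} lower bound rule of Proposition~\ref{prop:monotone} applied to $g^{\text{rx}}$ over the induced box $\bar{\mc{P}} = \prod_{j=1}^p \bar{\mc{D}}_{R(j)}$; that is, $\eta(\bar{\mc{D}}) = g^{\text{rx}}(\tilde{\vc{y}}) = \min_{\vc{y} \in \bar{\mc{P}}} g^{\text{rx}}(\vc{y})$. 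Writing the monotone rule for $g^{\text{rx}}$ as $\eta^{\text{rx}}$, this gives the identity $\eta(\bar{\mc{D}}) = \eta^{\text{rx}}(\bar{\mc{P}})$ for every box $\bar{\mc{D}} \subseteq \mc{D}$, which is the bridge between the two settings.

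Next I would verify the two conditions of Definition~\ref{def:convergent lower bound}. For consistency, note that $\mc{D}^1 \subseteq \mc{D}^2 \subseteq \mc{D}$ implies $\mc{D}^1_i \subseteq \mc{D}^2_i$ coordinatewise, hence the induced boxes satisfy $\mc{P}^1 \subseteq \mc{P}^2$; minimizing $g^{\text{rx}}$ over the smaller set yields a larger value, so $\eta(\mc{D}^1) = \eta^{\text{rx}}(\mc{P}^1) \geq \eta^{\text{rx}}(\mc{P}^2) = \eta(\mc{D}^2)$. For the convergence condition, I would take a nested sequence $\{\mc{D}^j\} \searrow \{\tilde{\vc{x}}\}$, which forces $\tilde{x}_i \in \Z$ for $i \in I$ since $\mc{D}^j_i \subseteq \Z$ there, and map it to the induced sequence $\{\mc{P}^j\}$ with $\mc{P}^j_k = \mc{D}^j_{R(k)}$. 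This sequence is nested and converges to the singleton $\{\tilde{\vc{y}}\}$ with $\tilde{y}_k = \tilde{x}_{R(k)}$; moreover, for each $k \in \dot{I}$ we have $R(k) \in I$, so $\tilde{y}_k = \tilde{x}_{R(k)} \in \Z$, exactly the integrality pattern required by Proposition~\ref{prop:monotone convergent}.

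With the hypotheses of Proposition~\ref{prop:monotone convergent} for $g^{\text{rx}}$ in place --- monotonicity over $\mc{P}$ and lower semicontinuity of $g^{\text{rx}}(\vc{y}_{\dot{C}}, \bar{\vc{y}}_{\dot{I}})$, both assumed in the statement --- that proposition yields $\lim_{j \to \infty} \eta^{\text{rx}}(\mc{P}^j) = g^{\text{rx}}(\tilde{\vc{y}})$. Translating through the identity $\eta(\mc{D}^j) = \eta^{\text{rx}}(\mc{P}^j)$ and the defining relation $g^{\text{rx}}(\tilde{\vc{y}}) = g(\tilde{\vc{x}})$ of the re-indexed function, I obtain $\lim_{j \to \infty} \eta(\mc{D}^j) = g(\tilde{\vc{x}})$, which is condition (ii). Together with consistency, this establishes that the rule is convergent with respect to $g$ over $\mc{D}$.

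The main obstacle I anticipate is bookkeeping rather than conceptual: one must carefully track the correspondence between the $\vc{x}$-boxes and the $\vc{y}$-boxes under the many-to-one map $R$, in particular confirming that (a) the minimum of $g^{\text{rx}}$ over the \emph{product} box $\bar{\mc{P}}$ --- which lets the copies $y_k$ sharing a common index $R(k)=i$ move independently --- is still exactly the quantity produced by Proposition~\ref{prop:reindexing}, and (b) the induced sequence $\{\mc{P}^j\}$ genuinely collapses to a single point carrying the correct integrality on $\dot{I}$, so that Proposition~\ref{prop:monotone convergent} applies verbatim. Once this correspondence is pinned down, the result follows immediately from the earlier propositions with no further estimation.
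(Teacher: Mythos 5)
Your proposal is correct and follows essentially the same route as the paper's own proof: both establish the bridge identity $\eta(\bar{\mc{D}}) = \min_{\vc{y} \in \bar{\mc{P}}} g^{\text{rx}}(\vc{y})$ via the monotonicity of $g^{\text{rx}}$, verify consistency from the inclusion of the induced $\vc{y}$-boxes, and then invoke Proposition~\ref{prop:monotone convergent} for $g^{\text{rx}}$ over $\mc{P}$, transporting nested $\vc{x}$-domain sequences to induced $\vc{y}$-domain sequences through $R$ and using that re-indexing preserves function values. Your explicit check that the limit point $\tilde{\vc{y}}$ inherits the required integrality on $\dot{I}$ is a small bookkeeping detail the paper leaves implicit, but it does not change the argument.
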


\begin{proof}
	Since $g^{\text{rx}}(\vc{y})$ is monotone, it follows from the definition of the lower bound calculation rule of Proposition~\ref{prop:reindexing} that $\eta(\mc{D}) = \min_{\vc{y} \in \mc{P}} \{g^{\text{rx}}(\vc{y})\}$.
	To prove condition (i) of the convergence property, consider box domains $\mc{D}^1$ and $\mc{D}^2$ such that $\mc{D}^1 \subseteq \mc{D}^2 \subseteq \mc{D}$.
	Define $\mc{P}^1 = \prod_{j=1}^p \mc{D}^1_{R(j)}$ and $\mc{P}^2 = \prod_{j=1}^p \mc{D}^2_{R(j)}$.
	It follows that $\mc{P}^1 \subseteq \mc{P}^2$. 
	Therefore, we can write that $\eta(\mc{D}^1) = \min_{\vc{y} \in \mc{P}^1} \{g^{\text{rx}}(\vc{y}) \} \geq \min_{\vc{y} \in \mc{P}^2} \{g^{\text{rx}}(\vc{y})\} = \eta(\mc{D}^2)$, which proves condition (i).	
	For condition (ii) of the convergence property, since $g^{\text{rx}}(\vc{y})$ is monotone and $g^{\text{rx}}(\vc{y}_{\dot{C}}, \bar{\vc{y}}_{\dot{I}})$ is lower semicontinuous over $\prod_{j \in \dot{C}} \mc{P}_j$ for any $\bar{\vc{y}}_{\dot{I}} \in \prod_{j \in \dot{I}} \mc{P}_j$, Proposition~\ref{prop:monotone convergent} implies that the lower bound calculation rule that outputs $\dot{\eta}(\mc{P}) = \min_{\vc{y} \in \mc{P}} \{g^{\text{rx}}(\vc{y})\}$ is convergent with respect to $g^{\text{rx}}(\vc{y})$ over $\mc{P}$.
	In other words, for any nested domain sequence $\{\mc{P}^k\}$, with $\mc{P}^k \subseteq \mc{P}$ for $k \in \N$, that converges to $\tilde{\vc{y}}$, we have $\lim_{k \to \infty} \dot{\eta}(\mc{P}^k) = g^{\text{rx}}(\tilde{\vc{y}})$.
	Consider a nested domain sequence $\{\mc{D}^k\}$, with $\mc{D}^k \subseteq \mc{D}$ for $k \in \N$, that converges to $\tilde{\vc{x}}$.
	Define $\dot{\mc{P}}^k = \prod_{j=1}^p \mc{D}^k_{R(j)}$ for each $k \in \N$.
	It is clear that $\{\dot{\mc{P}}^k\} \searrow \{\dot{y}\}$ where $\dot{y}_j = \tilde{x}_{R(j)}$ for each $j \in [p]$.
	Thus, we obtain $\lim_{k \to \infty} \dot{\eta}(\dot{\mc{P}}^k) = g^{\text{rx}}(\dot{\vc{y}})$ by the above definition.
	Using the fact that $\eta(\mc{D}^k) = \min_{\vc{y} \in \dot{\mc{P}}^k} \{g^{\text{rx}}(\vc{y})\} = \dot{\eta}(\dot{\mc{P}}^k)$ by definition of the considered lower bound calculation rule, we conclude that $\lim_{k \to \infty} \eta(\mc{D}^k) = g^{\text{rx}}(\dot{\vc{y}}) = g(\tilde{\vc{x}})$, where the last equality follows from the definition of re-indexed functions that preserve the function values at each given point.
	This shows that condition (ii) of the convergence property is satisfied.
		\qed	
\end{proof}

\section{Computational Results} \label{sec:computation}

In this section, we present numerical results based on benchmark instances from the MINLP Library \cite{minlplib} to demonstrate the effectiveness and capabilities of our global solution framework, compared to state-of-the-art global solvers. 
Since previous studies utilizing DD-based outer approximation \cite{davarnia:va:2020,davarnia:2021} have primarily focused on challenging problem classes that existing global solvers can handle, albeit with optimality gaps, this paper focuses on complementary problem classes that are unsolvable by current global solvers, marking them as the most challenging problems in the MINLP Library. 
As discussed in Section~\ref{sub:lower bound}, one of the main advantages of our DD-based global solution framework, compared to existing methods, is its ability to model and solve a broader class of MINLPs, including those with complex structures that are not amenable to conventional convexification methods, such as the factorable decomposition technique that is widely used in existing solvers. 
To demonstrate this capability, in this section, we present computational experiments on benchmark test instances from the MINLP Library that contain functional forms not admissible by global solvers, such as BARON, the leading commercial solver, and SCIP, the leading open-source solver.

\subsection{Algorithmic Settings} \label{sub:setting}

The numerical results presented in this section are obtained on a Windows $11$ ($64$-bit) operating system, $64$ GB RAM, $3.8$ GHz AMD Ryzen CPU. 
The DD-ECP Algorithm is written in Julia v1.9 via JuMP v1.11.1, and the outer approximation models are solved with CPLEX v22.1.0.
In this section, we present the general settings for the algorithms used in our solution framework.

\smallskip
We use Algorithm~\ref{alg:global} to solve the MINLP instances reformulated into the problem form described in \eqref{eq:MINLP-1}--\eqref{eq:MINLP-3}.
\black{To conform to this structure, any equality constraints are handled by splitting them into two inequalities.}
Since the model studied in this paper is bounded, for instances with variables that lack explicit bounds, we infer valid bounds based on the constraints of the model.
To calculate the optimality gap, we use the primal bound reported for each instance in the MINLP Library. The stopping criteria employed in $\text{Stop\_Flag}$ in Algorithm~\ref{alg:global} are a remaining optimality gap of $0.05$ or an elapsed time of $5000$ seconds, whichever occurs first.
The initial LP relaxation $LP$ for each instance is obtained by removing all nonlinear constraints.
The pruning rules in $\text{Prune\_Node}$ include: (i) the dual bound obtained at a node is smaller than the best current primal bound; (ii) the outer approximation is infeasible; (iii) the DD constructed for any constraints is infeasible; and (iv) the optimal solution of the outer approximation satisfies all constraints.

\smallskip
For the $\mt{Construct\_DD}$ oracle, we use Algorithm~\ref{alg:relaxed nonseparable DD} for constructing DDs for general non-separable constraints.
In these algorithms, we create the sub-domain partitions for each variable $x_i$ for $i \in [n]$ such that the entire variable domain is divided into 50 intervals of equal length, i.e., $|L_i| = 50$.
We impose a default width limit of $\omega = 5000$.
To merge nodes at each layer, we apply the the merging policy $\mt{Merge}^g(.)$ as described in Section~\ref{sub:merging}.
The state values at the DD nodes are computed using the lower bound calculation rules based on the monotonicity property and re-indexing techniques outlined in Section~\ref{sub:lower bound}.

\smallskip
For the $\mt{Outer\_Approx}$ oracle, we use the subgradient-type method of Algorithm~\ref{alg:subgradient} to generate cutting planes that are added to the outer approximation model.
For this algorithm, we set a constant step size rule $\rho = 1$ and use the origin as the starting point for the subgradient algorithm.
The termination criterion is defined by the number of iterations, which is set to $50$.

\smallskip
For the $\mt{Branch}$ oracle, after obtaining the optimal solution of the outer approximation model, we select the variable whose optimal solution lies closest to the center of its domain interval. 
To continue the \BB process, we apply a node selection rule that prioritizes the node with the largest dual bound as the next candidate.

\subsection{Test Instances} \label{sub:test}

In this section, we present computational results for various benchmark instances from the MINLP Library. 
These instances feature complex functional structures that cannot be handled by existing global solvers like BARON and SCIP, and are therefore considered inadmissible/intractable. 
In contrast to global solvers, which fail to return dual bounds for these test instances, our DD-based global framework is capable of solving these problems and obtaining dual bounds, as shown in the tables for each instance.
To provide better insight into the structure of each model, a summary of the problem specifications, their area of application, and their sources of difficulty is presented in the following sub-sections.


\subsubsection{Test Instance: $\mt{quantum}$} \label{subsub:test1}

This problem has applications in quantum mechanics \cite{ogura1999post}. 
The test instance has $2$ continuous variables and $1$ nonlinear constraint. 
This nonlinear constraint includes polynomial, fractional, exponential, and gamma functions. 
The following constraint illustrates a complex structure used in this model that is inadmissible in the current solvers used.


\begin{equation*}
	\frac{-0.5 \, \sqrt{x_3} \, x_2^{\frac{1}{x_3}} \Gamma(2 - \frac{0.5}{x_3}) + 0.5 x_2^{\frac{-1}{x_3}} \Gamma( \frac{1.5}{x_3}) + x_2^{\frac{-2}{x_3}} \Gamma( \frac{2.5}{x_3})}{\Gamma(\frac{0.5}{x_3})} + z = 0,
\end{equation*}

\noindent where $\Gamma(.)$ is the gamma function. The performance of our DD-based solution framework is summarized in Table~\ref{tab:test1}.
The first two columns show the number of variables and constraints in each problem, respectively. 
The column labeled `Primal' presents the primal bound for the test instance, as reported in the MINLP Library. 
The dual bound obtained from our proposed global method is listed in the `Dual' column. 
The optimality gap is provided in the `Gap' column and is calculated as $\frac{\text{dual bound} - \text{primal bound}}{\text{primal bound}}$.
The next two columns, `Node Explored' and `Node Remaining,' represent the number of nodes explored and the number of nodes still open at the termination of the algorithm in the \BB tree. 
Finally, the last column shows the total solution time for the algorithm.

\begin{table}[h] 	
	\caption{Performance of the DD framework for test instance $\mt{quantum}$}			
	\label{tab:test1}
	\begin{center}
			\begin{tabular}{|r|r|r|r|r|r|r|r|}
				\hline
				\multicolumn{2}{|c|}{Problem Specs} & \multicolumn{3}{c|}{Gap Closure} & \multicolumn{2}{c|}{\BB Tree} & Time (s) \\ \cline{1-7}
				Var. \# & Con. \# \ & Primal & Dual & Gap & Node Explored & Node Remained &  \\
				\hline
				\hline 
				$2$  & $1$ & $-0.804$ & $-0.765$ & $0.05$ & $4$  & $1$  & $6.62$ \\				
				\hline
			\end{tabular}
	\end{center}
\end{table}

\vspace{-0.2in}
\subsubsection{Test Instance: $\mt{ann\_fermentation\_tanh}$} \label{subsub:test2}

This problem has applications in neural networks used to learn the fermentation process of gluconic acid, where the activation functions are represented by hyperbolic tangent operators \cite{schweidtmann2019deterministic}. 
The test instance has $12$ continuous variables and $10$ constraints. 
The nonlinear constraints include fractional and hyperbolic (trigonometric) functions. 
The following constraint illustrates a complex structure used in this model that is inadmissible in the current solvers used.

\begin{equation*}
	\tanh(x_{12}) - x_8 = 0,
\end{equation*}

\noindent where $\tanh(.)$ is the hyperbolic tangent function.
The performance of our proposed method when applied to this test instance is presented in Table~\ref{tab:test2}, with columns are defined similarly to those in Table~\ref{tab:test1}.

\begin{table}[h] 	
	\caption{Performance of the DD framework for test instance $\mt{ann\_fermentation\_tanh}$}			
	\label{tab:test2}
	\begin{center}
			\begin{tabular}{|r|r|r|r|r|r|r|r|}
				\hline
				\multicolumn{2}{|c|}{Problem Specs} & \multicolumn{3}{c|}{Gap Closure} & \multicolumn{2}{c|}{\BB Tree} & Time (s) \\ \cline{1-7}
				Var. \# & Con. \# \ & Primal & Dual & Gap & Node Explored & Node Remained &  \\
				\hline
				\hline 
				$12$  & $10$ & $99.93$ & $104.92$ & $0.05$ & $5104$  & $767$  & $25.74$ \\				
				\hline
			\end{tabular}
	\end{center}
\end{table}

\vspace{-0.2in}
\subsubsection{Test Instance: $\mt{fct}$} \label{subsub:test3}

This problem is included in the GAMS Model Library \cite{pinter1999lgo}. 
The test instance has $12$ continuous variables and $10$ constraints. 
The nonlinear constraints include absolute value, trigonometric, polynomial, and modulo functions. 
The following constraint shows a complex structure among the constraints used in this model, which is inadmissible in the current solvers.

\begin{equation*}
	\Big|\sin\big(4\, \text{mod}(x_2, \pi)\big)\Big| - x_3 = 0,
\end{equation*}

\noindent where $|.|$ is the absolute value function, and $\text{mod}(a, b)$ is the modulo operator with dividend $a$ and divisor $b$.
The performance of our proposed method when applied to this test instance is presented in Table~\ref{tab:test3}, with columns are defined similarly to those in Table~\ref{tab:test1}.
For this test instance, we did not calculate the remaining gap as the primal bound is zero.
Instead, we allowed the algorithm to run until it achieved a global optimal solution with a precision of $10^{-5}$ for the optimal value.

\begin{table}[h] 	
	\caption{Performance of the DD framework for test instance $\mt{fct}$}			
	\label{tab:test3}
	\begin{center}
			\begin{tabular}{|r|r|r|r|r|r|r|r|}
				\hline
				\multicolumn{2}{|c|}{Problem Specs} & \multicolumn{3}{c|}{Gap Closure} & \multicolumn{2}{c|}{\BB Tree} & Time (s) \\ \cline{1-7}
				Var. \# & Con. \# \ & Primal & Dual & Gap & Node Explored & Node Remained &  \\
				\hline
				\hline 
				$12$  & $10$ & $0.00$ & $6.83 \times 10^{-6}$  & -- & $591$  & $0$  & $1668.09$ \\				
				\hline
			\end{tabular}
	\end{center}
\end{table}

\vspace{-0.2in}
\subsubsection{Test Instance: $\mt{worst}$} \label{subsub:test4}

This problem has applications in statistical models used for portfolio optimization and risk management \cite{dahl1989some}. 
The test instance has $35$ continuous variables and $30$ constraints. 
The nonlinear constraints include polynomial, exponential, logarithm, fractional, and modulo functions. 
The following constraint shows a complex structure among the constraints used in this model, which is inadmissible in the current solvers.

\begin{equation*}
	e^{-0.33889 \, x_{32}} \times \big(\text{erf}(x_3) \, x_{21} - 95 \, \text{erf}(x_{10}) \big) - x_{23} = 0,
\end{equation*}

\noindent where $\text{erf}(.)$ is the error function calculated as the integral of the standard normal distribution.
The performance of our proposed method when applied to this test instance is presented in Table~\ref{tab:test4}, with columns are defined similarly to those in Table~\ref{tab:test1}.

\begin{table}[h] 	
	\caption{Performance of the DD framework for test instance $\mt{worst}$}			
	\label{tab:test4}
	\begin{center}
			\begin{tabular}{|r|r|r|r|r|r|r|r|}
				\hline
				\multicolumn{2}{|c|}{Problem Specs} & \multicolumn{3}{c|}{Gap Closure} & \multicolumn{2}{c|}{\BB Tree} & Time (s) \\ \cline{1-7}
				Var. \# & Con. \# \ & Primal & Dual & Gap & Node Explored & Node Remained &  \\
				\hline
				\hline 
				$35$  & $30$ & $-20762609$ & $-19583378$  & $0.05$ & $22$  & $3$  & $211.51$ \\				
				\hline
			\end{tabular}
	\end{center}
\end{table}

\vspace{-0.2in}
\subsubsection{Test Instance: $\mt{ann\_compressor\_tanh}$} \label{subsub:test5}

This problem has applications in learning compressor powers via neural networks \cite{schweidtmann2019deterministic}. 
This test instance has $97$ continuous variables and $96$ constraints. 
The nonlinear constraints in this problem include quadratic and hyperbolic (trigonometric) functions. 
The following constraint shows a complex structure among the constraints used in this model, which is inadmissible in the current solvers.

\begin{equation*}
	\tanh(x_{32}) - x_{10} = 0
\end{equation*}

The performance of our proposed method when applied to this test instance is presented in Table~\ref{tab:test5}, with columns are defined similarly to those in Table~\ref{tab:test1}.

\begin{table}[h] 	
	\caption{Performance of the DD framework for test instance $\mt{ann\_compressor\_tanh}$}			
	\label{tab:test5}
	\begin{center}
			\begin{tabular}{|r|r|r|r|r|r|r|r|}
				\hline
				\multicolumn{2}{|c|}{Problem Specs} & \multicolumn{3}{c|}{Gap Closure} & \multicolumn{2}{c|}{\BB Tree} & Time (s) \\ \cline{1-7}
				Var. \# & Con. \# \ & Primal & Dual & Gap & Node Explored & Node Remained &  \\
				\hline
				\hline 
				$97$  & $96$ & $-213100.0$ & $-22331.90$  & $0.05$ & $867$  & $52$  & $276.33$ \\				
				\hline
			\end{tabular}
	\end{center}
\end{table}

\vspace{-0.2in}
\subsubsection{Test Instance: $\mt{ann\_peaks\_tanh}$} \label{subsub:test6}

This problem has applications in neural networks \cite{schweidtmann2019deterministic}. 
This test instance has $100$ continuous variables and $99$ constraints. 
The nonlinear constraints in this problem include hyperbolic (trigonometric) functions. 
The following constraint shows a complex structure among the constraints used in this model, which is inadmissible in the current solvers.

\begin{equation*}
	\tanh(x_{55}) - x_6 = 0
\end{equation*}

The performance of our proposed method when applied to this test instance is presented in Table~\ref{tab:test6}, with columns are defined similarly to those in Table~\ref{tab:test1}.

\begin{table}[h] 	
	\caption{Performance of the DD framework for test instance $\mt{ann\_peaks\_tanh}$}			
	\label{tab:test6}
	\begin{center}
			\begin{tabular}{|r|r|r|r|r|r|r|r|}
				\hline
				\multicolumn{2}{|c|}{Problem Specs} & \multicolumn{3}{c|}{Gap Closure} & \multicolumn{2}{c|}{\BB Tree} & Time (s) \\ \cline{1-7}
				Var. \# & Con. \# \ & Primal & Dual & Gap & Node Explored & Node Remained &  \\
				\hline
				\hline 
				$100$  & $99$ & $6.56$ & $6.97$  & $0.05$ & $10$  & $7$  & $135.47$ \\				
				\hline
			\end{tabular}
	\end{center}
\end{table}

\vspace{-0.2in}

\subsubsection{Test Instance: $\mt{cesam2cent}$} \label{subsub:test7}

This problem has applications in information theory, econometrics, and estimating social accounting matrices using cross entropy methods \cite{golan1996maximum,judge2011information,robinson2001updating}. 
This test instance has $316$ continuous variables and $166$ constraints. 
The nonlinear constraints in this problem include polynomial, exponential, and cross entropy functions. 
The following constraint shows a complex structure among the constraints used in this model, which is inadmissible in the current solvers.

\begin{equation*}
	\sum_{i=160}^{316} \text{Centropy}(x_i, a_i) - z = 0,
\end{equation*}

\noindent where $a_i \in \Re$ is a constant, and $\text{Centropy}(.)$ is the cross-entropy function.
The performance of our proposed method when applied to this test instance is presented in Table~\ref{tab:test7}, with columns are defined similarly to those in Table~\ref{tab:test1}.

\begin{table}[h] 	
	\caption{Performance of the DD framework for test instance $\mt{cesam2cent}$}			
	\label{tab:test7}
	\begin{center}
			\begin{tabular}{|r|r|r|r|r|r|r|r|}
				\hline
				\multicolumn{2}{|c|}{Problem Specs} & \multicolumn{3}{c|}{Gap Closure} & \multicolumn{2}{c|}{\BB Tree} & Time (s) \\ \cline{1-7}
				Var. \# & Con. \# \ & Primal & Dual & Gap & Node Explored & Node Remained &  \\
				\hline
				\hline 
				$316$  & $166$ & $-0.507$ & $-0.481$  & $0.05$ & $8$  & $3$  & $4604.13$ \\				
				\hline
			\end{tabular}
	\end{center}
\end{table}

\vspace{-0.1in}
\section{Conclusion} \label{sec:conclusion}

We develop a novel graphical framework to globally solve general MINLPs. 
This paper details the key components of the framework, including (i) a method for constructing DDs that represent relaxations of the MINLP sets, (ii) a cut-generation technique that produces linear outer approximations of the underlying set, and (iii) a spatial branch-and-bound strategy that iteratively refines these approximations until convergence to a global optimal solution. 
Applicable to optimization problems of general structure, this framework represents the most comprehensive extension of previously developed DD-based approaches for MINLPs, addressing the longstanding need for a general-purpose DD-based method for globally solving MINLPs. 
Computational experiments on benchmark MINLP instances with complex structures, which are inadmissible in state-of-the-art global solvers, show the capabilities and effectiveness of the proposed framework.




\bibliographystyle{spbasic}
\bibliography{Mine,DD,MINLP,MINLP2}

\end{document}